%%%%%%%%%%%%%%%%%%%%%%%%%%%%%%%%%%%%
%%%%%%%%%%%%%%%%%%%%%%%%%%%%%%%%%%%%
%% Settings
%%%%%%%%%%%%%%%%%%%%%%%%%%%%%%%%%%%%
%%%%%%%%%%%%%%%%%%%%%%%%%%%%%%%%%%%%

%% Document class

\documentclass[a4j,10pt,showkeys]{article}

%% Packages

\usepackage{amsmath,amsthm,amssymb}
\usepackage{geometry}
\geometry{left=25mm,right=25mm,top=25mm,bottom=30mm}
\usepackage{hyperref}
\usepackage[capitalize]{cleveref}
\usepackage{cancel}
\usepackage{mathtools}
\usepackage{bm}
\usepackage{color}
\usepackage{mathrsfs}
\usepackage[dvipsnames]{xcolor}
\hypersetup{
    colorlinks=true,
    citecolor=MidnightBlue,
    linkcolor=Red,
    urlcolor=OliveGreen,
}
\usepackage{tikz}
\usetikzlibrary{calc}
\tikzset{set label/.style={fill=white}}

%% Theorem environment

\newtheorem{theo}{Theorem}[section]
\newtheorem{lemm}[theo]{Lemma}

\newtheorem{cor}[theo]{Corollary}
\theoremstyle{definition}
\newtheorem{defi}[theo]{Definition}
\newtheorem{rem}[theo]{Remark}
\newtheorem{assum}{Assumption}
\newtheorem{exam}[theo]{Example}

%% Alphabets (mathbb, mathcal, mathscr, mathfrak)

\newcommand{\bE}{\mathbb{E}}
\newcommand{\bF}{\mathbb{F}}

\newcommand{\bN}{\mathbb{N}}

\newcommand{\bP}{\mathbb{P}}
\newcommand{\bQ}{\mathbb{Q}}
\newcommand{\bR}{\mathbb{R}}

\newcommand{\bW}{\mathbb{W}}

\newcommand{\cB}{\mathcal{B}}

\newcommand{\cD}{\mathcal{D}}

\newcommand{\cF}{\mathcal{F}}
\newcommand{\cG}{\mathcal{G}}
\newcommand{\cH}{\mathcal{H}}

\newcommand{\cK}{\mathcal{K}}
\newcommand{\cL}{\mathcal{L}}

\newcommand{\cN}{\mathcal{N}}

\newcommand{\cP}{\mathcal{P}}

\newcommand{\cS}{\mathcal{S}}

\newcommand{\cU}{\mathcal{U}}
\newcommand{\cV}{\mathcal{V}}

\newcommand{\cY}{\mathcal{Y}}

\newcommand{\sC}{\mathscr{C}}

%% Special commands

\newcommand{\ep}{\varepsilon}
\newcommand{\diff}{\mathrm{d}}
\newcommand{\dmu}{\,\mu(\diff\theta)}
\newcommand{\dmud}{\,\mu(\diff\theta')}
\newcommand{\supp}{{\mathrm{supp}\,\mu}}
\newcommand{\LG}{\mathrm{LG}}
\newcommand{\BDG}{\mathrm{BDG}}
\newcommand{\Bb}{\cB_\mathrm{b}(\cH_\mu)}
\newcommand{\Cb}{C_\mathrm{b}(\cH_\mu)}

\newcommand{\dual}[2]{{}_{\cV^*_\mu}\langle{#1},{#2}\rangle_{\cV_\mu}}

\newcommand{\relmiddle}[1]{\mathrel{}\middle#1\mathrel{}}
\newcommand{\1}{\mbox{\rm{1}}\hspace{-0.25em}\mbox{\rm{l}}}

%% Others

\providecommand{\keywords}[1]{\textbf{Keywords:} #1}
\makeatletter

\@addtoreset{equation}{section}
\makeatother
\def\widebar{\accentset{{\cc@style\underline{\mskip10mu}}}}
\numberwithin{equation}{section}
\allowdisplaybreaks

%%%%%%%%%%%%%%%%%%%%%%%%%%%%%%%%%%%%
%%%%%%%%%%%%%%%%%%%%%%%%%%%%%%%%%%%%
%% Document
%%%%%%%%%%%%%%%%%%%%%%%%%%%%%%%%%%%%
%%%%%%%%%%%%%%%%%%%%%%%%%%%%%%%%%%%%

%% Title

\title{Markovian lifting and asymptotic log-Harnack inequality for stochastic Volterra integral equations}

%% Author(s)

\author{
Yushi Hamaguchi\footnote{Graduate School of Engineering Science, Department of Systems Innovation, Osaka University. 1-3, Machikaneyama, Toyonaka, Osaka, Japan. Email: \href{mailto:hmgch2950@gmail.com}{hmgch2950@gmail.com}}\ \footnote{The author was supported by JSPS KAKENHI Grant Number 22K13958.}
}

%%%%%%%%%%%%%%%%%%%%%%%%%%%%%%%%%%%%

\begin{document}
\maketitle

%% Abstract

\begin{abstract}
We introduce a new framework of Markovian lifts of stochastic Volterra integral equations (SVIEs for short) with completely monotone kernels. We define the state space of the Markovian lift as a separable Hilbert space which incorporates the singularity or regularity of the kernel into the definition. We show that the solution of an SVIE is represented by the solution of a lifted stochastic evolution equation (SEE for short) defined on the Hilbert space and prove the existence, uniqueness and Markov property of the solution of the lifted SEE. Furthermore, we establish an asymptotic log-Harnack inequality and some consequent properties for the Markov semigroup associated with the Markovian lift via the asymptotic coupling method.
\end{abstract}

%% Keywords

\keywords
Stochastic Volterra integral equation; Markovian lift; asymptotic log-Harnack inequality.

%% MSC

\textbf{2020 Mathematics Subject Classification}: 60H20; 60H15; 60G22; 37A25.

%60H20 Stochastic integral equations
%60H15 Stochastic partial differential equations (aspects of stochastic analysis)
%60G22 Fractional processes, including fractional Brownian motion
%37A25 Ergodicity, mixing, rates of mixing

%45D05 Volterra integral equations
%45A05 Linear integral equations
%93E20 Optimal stochastic control
%93B52 Feedback control
%49K45 Optimality conditions for problems involving randomness
%45G05 Singular nonlinear integral equations
%49N15 Duality theory (optimization)
%45B05 Fredholm integral equations
%34A08 Fractional ordinary differential equations and fractional differential inclusions
%26A33 Fractional derivatives and integrals

%%%%%%%%%%%%%%%%%%%%%%%%%%%%%%%%%%
%%%%%%%%%%%%%%%%%%%%%%%%%%%%%%%%%%
%% Section
%%%%%%%%%%%%%%%%%%%%%%%%%%%%%%%%%%
%%%%%%%%%%%%%%%%%%%%%%%%%%%%%%%%%%

\section{Introduction}\label{intro}

We consider the following stochastic Volterra integral equation (SVIE for short):
\begin{equation}\label{intro_eq_SVIE}
	X_t=x(t)+\int^t_0K(t-s)b(X_s)\,\diff s+\int^t_0K(t-s)\sigma(X_s)\,\diff W_s,\ t>0,
\end{equation}
where $W$ is a $d$-dimensional Brownian motion, $b:\bR^n\to\bR^n$ and $\sigma:\bR^n\to\bR^{n\times d}$ are deterministic maps, $x$ is an $\bR^n$-valued deterministic function called the forcing term, and $K:(0,\infty)\to[0,\infty)$ is a deterministic function called the kernel. On the one hand, if $K$ and $x$ are constants or of the exponential forms $K(t)=e^{-\beta t}$ and $x(t)=e^{-\beta t}y$ for some $\beta>0$ and $y\in\bR^n$, then the SVIE \eqref{intro_eq_SVIE} reduces to a standard stochastic differential equation (SDE for short). On the other hand, the case of the fractional kernel $K(t)=\frac{1}{\Gamma(\alpha)}t^{\alpha-1}$ with $\alpha\in(\frac{1}{2},1)$ corresponds to a kind of time-fractional SDE (cf.\ \cite{SaKiMa87}), where $\Gamma(\alpha)$ denotes the Gamma function. SVIEs were first studied by Berger and Mizel \cite{BeMi80a,BeMi80b}, and then applied in many research areas including population dynamics, tumor growth, and volatility models in mathematical finance. SVIEs provide suitable models for dynamics with hereditary properties, memory effects and the power-law property of the volatility (cf.\ \cite{BaBeVe11,GrLoSt90,Sc06,GaJaRo18}).

It is well-known that the solutions of SVIEs are neither Markovian nor semimartingales in general, and thus we cannot apply It\^{o}'s stochastic calculus to SVIEs directly. However, if we ``lift'' the state space of SVIEs to an infinite dimensional space, we can recover a kind of Markov property. To see this phenomenon, let us consider the following Volterra process:
\begin{equation*}
	B_t:=\int^t_0K(t-s)\,\diff W_s,\ t>0,
\end{equation*}
with a $1$-dimensional Brownian motion $W$ and the fractional kernel $K(t)=\frac{1}{\Gamma(\alpha)}t^{\alpha-1}$ of order $\alpha\in(\frac{1}{2},1)$. The Volterra process $B$ is a Riemann--Liouville type fractional Brownian motion of the Hurst parameter $H=\alpha-\frac{1}{2}\in(0,\frac{1}{2})$. Carmona and Coutin \cite{CaCu98} proposed a Markovian representation approach for $B$ of the following form:
\begin{equation}\label{intro_eq_representation1}
	B_t=\int_{[0,\infty)}Y_t(\theta)\dmu,\ t>0,
\end{equation}
where $\dmu=\frac{1}{\Gamma(\alpha)\Gamma(1-\alpha)}\theta^{-\alpha}\1_{(0,\infty)}(\theta)\,\diff\theta$, and each $Y(\theta)$ is an Ornstein--Uhlenbeck process with the mean reverting parameter $\theta$, which solves the SDE
\begin{equation*}
	\begin{dcases}
	\diff Y_t(\theta)=-\theta Y_t(\theta)\,\diff t+\diff W_t,\ t>0,\\
	Y_0(\theta)=0.
	\end{dcases}
\end{equation*}
The above representation means that the fractional Brownian motion $B$ can be represented as a superposition of Ornstein--Uhlenbeck processes with various mean reverting parameters. Although the fractional Brownian motion $B$ is neither Markovian nor a semimartingale, its lift $(Y(\theta))_{\theta\in(0,\infty)}$ can be seen as a Markov process in a function space. This approach was adopted and applied to the studies of tractable financial models with fractional features \cite{AbiJa19,AbiJaEu19,HaSt19}, optimal control problems for SVIEs \cite{AbiJaMiPh21}, and efficient numerical approximations of SVIEs \cite{AlKe21,BaBr23}, among others.

In this paper, motivated by \cite{CaCu98}, we formulate a new framework of Markovian lifting of SVIEs and investigate some asymptotic behaviours of the associated Markov semigroup. Here, let us give a formal observation. We assume that the kernel $K$ is completely monotone, that is, $K$ is infinitely differentiable and satisfies $(-1)^k\frac{\diff^k}{\diff t^k}K(t)\geq0$ for any $t\in(0,\infty)$ and any nonnegative integers $k$. By Bernstein's theorem, there exists a unique Radon measure $\mu$ on $[0,\infty)$ such that
\begin{equation*}
	K(t)=\int_{[0,\infty)}e^{-\theta t}\dmu=\int_\supp e^{-\theta t}\dmu,\ t>0,
\end{equation*}
where $\supp\subset[0,\infty)$ denotes the support of $\mu$. Furthermore, assume that the forcing term $x$ is of the form
\begin{equation*}
	x(t)=(\cK y)(t):=\int_\supp e^{-\theta t}y(\theta)\dmu,\ t>0,
\end{equation*}
for some Borel measurable function $y:\supp\to\bR^n$. Under suitable integrability conditions, by the (stochastic) Fubini theorem, the SVIE \eqref{intro_eq_SVIE} becomes
\begin{align*}
	X_t&=(\cK y)(t)+\int^t_0K(t-s)b(X_s)\,\diff s+\int^t_0K(t-s)\sigma(X_s)\,\diff W_s\\
	&=\int_\supp e^{-\theta t}y(\theta)\dmu+\int^t_0\int_\supp e^{-\theta(t-s)}\dmu\,b(X_s)\,\diff s+\int^t_0\int_\supp e^{-\theta(t-s)}\dmu\,\sigma(X_s)\,\diff W_s\\
	&=\int_\supp\Big\{e^{-\theta t}y(\theta)+\int^t_0e^{-\theta(t-s)}b(X_s)\,\diff s+\int^t_0e^{-\theta(t-s)}\sigma(X_s)\,\diff W_s\Big\}\dmu.
\end{align*}
Thus, $X$ can be represented by
\begin{equation}\label{intro_eq_representation2}
	X_t=\int_\supp Y_t(\theta)\dmu,\ t>0,
\end{equation}
with $Y:\Omega\times[0,\infty)\times\supp\to\bR^n$ given by
\begin{equation*}
	Y_t(\theta)=e^{-\theta t}y(\theta)+\int^t_0e^{-\theta(t-s)}b(X_s)\,\diff s+\int^t_0e^{-\theta(t-s)}\sigma(X_s)\,\diff W_s,\ t\geq0,\ \theta\in\supp,
\end{equation*}
or equivalently, in the differential form
\begin{equation*}
	\begin{dcases}
	\diff Y_t(\theta)=-\theta Y_t(\theta)\,\diff t+b(X_t)\,\diff t+\sigma(X_t)\,\diff W_t,\ t>0,\ \theta\in\supp,\\
	Y_0(\theta)=y(\theta),\ \theta\in\supp.
	\end{dcases}
\end{equation*}
The above formal observation means that the solution of the SVIE \eqref{intro_eq_SVIE} can be represented as a superposition of the collection of the parametrized processes $Y(\theta)$, $\theta\in\supp$. Then, we can guess that the function-valued lifted process $Y=(Y(\theta))_{\theta\in\supp}$ solves a kind of infinite dimensional stochastic evolution equation (SEE for short) and enjoys the Markov property as an infinite dimensional process. From this formal observation, the following natural questions arise:
\begin{itemize}
\item[Q\,1.]
What is the reasonable state space in which the lifted process $Y$ becomes a Markov process?
\item[Q\,2.]
If we can get a reasonable state space of the Markov process, what can we say more about the associated Markov semigroup?
\end{itemize}

In this paper, we address the above questions. The following are our main contributions in this paper.
\begin{itemize}
\item[(1)]
\emph{We propose and formulate a state space of lifted processes as a separable Hilbert space, and investigate its fundamental properties.} See \cref{lift-space}.
\item[(2)]
\emph{We rigorously prove the equivalence (in some sense) between the SVIE \eqref{intro_eq_SVIE} and the associated Hilbert-valued lifted SEE \eqref{lift_eq_SEE}.} See \cref{lift_theo_equivalence}.
\item[(3)]
\emph{Under suitable assumptions, we show that the lifted SEE has a unique solution which satisfies the Markov and Feller properties.} See \cref{lift_theo_Markov}.
\item[(4)]
\emph{Under suitable assumptions, we derive an asymptotic log-Harnack inequality for the associated Markov semigroup.} See \cref{Harnack_theo_Harnack}.
\end{itemize}

Let us make some comments about the above questions and contributions.

Concerning the question Q\,1, a first possible candidate of the state space of lifts would be $L^1(\mu)$. Indeed, Carmona and Coutin \cite{CaCu98}, Harms and Stefanovits \cite{HaSt19} and Abi Jaber, Miller and Pham \cite{AbiJaMiPh21} adopted the space $L^1(\mu)$ as the state space of lifts. This choice is natural in view of the integral representations \eqref{intro_eq_representation1} and \eqref{intro_eq_representation2}. However, it has a drawback that each non-zero constant vector does not belong to $L^1(\mu)$ in general. Indeed, $\mu$ becomes an infinite measure when the kernel $K$ is singular (in the sense that $\lim_{t\downarrow0}K(t)=\infty$). As a consequence, the $\bR^n$-valued drift coefficient $b$ or the $\bR^{n\times d}$-valued diffusion coefficient $\sigma$ cannot be realized as $L^1(\mu)$-valued maps. Moreover, $L^1(\mu)$ is not a Hilbert space, which makes the infinite dimensional analysis of the lifted equation difficult. A second candidate would be $L^2(\mu)$ which was treated in \cite{HaSt19}. In this case, although it is a Hilbert space, the coefficients $b$ and $\sigma$ cannot be realized as $L^2(\mu)$-valued maps by the same reasons as above. Furthermore, when the kernel $K$ is singular, the measure becomes an infinite measure, and thus $L^2(\mu)\setminus L^1(\mu)\neq\emptyset$ in general. Hence, the integral representations \eqref{intro_eq_representation1} or \eqref{intro_eq_representation2} might not make sense, and this would be one reason why \cite{AbiJaMiPh21,CaCu98} avoided to choose $L^2(\mu)$ as the state space; see \cite[Remark 2.4]{AbiJaMiPh21}. Cuchiero and Teichmann \cite{CuTe19,CuTe20} considered Markovian lifts of SVIEs in view of measure-valued stochastic partial differential equations (SPDEs for short), and formulated a framework of associated Markov processes called generalized Feller processes. In our notations, the measure-valued process $\lambda_t(\diff\theta):=Y_t(\theta)\dmu$ corresponds to the Markovian lift considered therein. In their framework, the state space of lifts is taken as the space of finite signed (or vector-valued) regular Borel measures on the extended real half-line $[0,\infty]$ endowed with its weak-$*$ topology. They applied an abstract framework of measure-valued SPDEs to affine Volterra processes and obtained the affine transformation formula in the infinite dimensional point of view. However, since the state space is not a Polish space\footnote{The continuous dual space of an infinite dimensional Banach space is not metrizable with respect to the weak-$*$ topology.}, one cannot apply general theory of Markov processes on Polish spaces to their framework directly.

In contrast to the aforementioned works, we propose a \emph{Hilbert-valued Markovian lifting approach} for SVIEs. Our framework covers SVIEs with (completely monotone) singular kernels such as the fractional kernel $K(t)=\frac{1}{\Gamma(\alpha)}t^{\alpha-1}$ and the Gamma kernel $K(t)=\frac{1}{\Gamma(\alpha)}e^{-\beta t}t^{\alpha-1}$ with $\alpha\in(\frac{1}{2},1)$ and $\beta>0$. Our idea is to incorporate the singularity/regularity of the kernel into the definition of the state space explicitly as a weight of an $L^2$-space. We demonstrate that the state space has good structures, and the lifted SEE fits into the well-established framework of SPDEs in Hilbert spaces. In this new framework, we show that the solution of an SVIE is represented by the solution of a lifted SEE. Then, we show the existence, uniqueness and the Markov and Feller properties of the solutions of lifted SEEs. Our general framework can be a useful tool to study SVIEs by means of It\^{o}'s stochastic calculus in Hilbert spaces, and it has potential applications to mathematical finance, optimal control theory, ergodic theory and numerical analysis for SVIEs.

In our new framework, we address the question Q\,2 mentioned above. Specifically, we are concerned with the long-time asymptotic behaviour and the ergodicity of the Hilbert-valued Markovian lifts of SVIEs. There are only a few previous works in this direction. Friesen and Jin \cite{FrJi22} studied Volterra square-root processes and showed the existence of the long-time limiting distributions under suitable integrability conditions of the kernel. They applied their results to the Volterra CIR process with a Gamma kernel of the form $K(t)=\frac{1}{\Gamma(\alpha)}e^{-\beta t}t^{\alpha-1}$ with $\alpha\in(\frac{1}{2},1)$ and $\beta>0$ and characterized its stationary distribution.
%
%Jacquier, Pannier and Spiliopoulos \cite{JaPaSp22} studied the ergodic behaviour of the Markovian lifts of affine Volterra processes. They adopted the framework of measure-valued SPDEs introduced by \cite{CuTe19,CuTe20} and showed the existence of an invariant probability measure for the associated Markovian lift; the uniqueness of the invariant probability measure is left open though.
%
Benth, Detering and Kr\"{u}hner \cite{BeDeKr22} investigated long-time limiting distributions of SVIEs by means of abstract results of SPDEs. Although this work covers rather general classes of SVIEs, it requires that the kernel $K$ is sufficiently regular which excludes singular kernels such as the fractional kernel or the Gamma kernel.

In the last half of this paper, by means of the Hilbert-valued Markovian lifting approach which we introduce in the first half, we study the long-time asymptotic behaviour and the ergodicity for the lifted SEE of the SVIE \eqref{intro_eq_SVIE}. We begin with detailed analysis of a Gaussian Volterra process as the simplest case of our general framework. This corresponds to the (one-dimensional) SVIE \eqref{intro_eq_SVIE} with $b=0$ and $\sigma=1$, and the associated lifted SEE becomes an Ornstein--Uhlenbeck process on the Hilbert space. In this case, we characterize the invariant probability measure for the associated lifted SEE and construct a stationary process for the original SVIE. The existence and uniqueness of the invariant probability measure are completely characterized by the integrability condition of the kernel $K$; see \cref{Gauss_theo_IPM}. Concerning the general case with the coefficients having non-linear dependence on the current state, we focus on the so-called (asymptotic) log-Harnack inequality and show some important consequent properties including the uniqueness of the invariant probability measure for the lifted SEE.

The dimension-free Harnack inequality was initiated by Wang \cite{Wa97} for elliptic diffusion semigroups on Riemannian manifolds, and he also introduced the log-Harnack inequality in \cite{Wa10}. For a Markov semigroup $\{P_t\}_{t\geq0}$ on the class $\cB_\mathrm{b}(E)$ of real-valued bounded Borel measurable functions on a Polish space $E$, the log-Harnack inequality is of type
\begin{equation*}
	P_t\log f(\bar{x})\leq\log P_tf(x)+\Phi(x,\bar{x})
\end{equation*}
for any $t>0$, $x,\bar{x}\in E$ and any $f\in\cB_\mathrm{b}(E)$ such that $f\geq1$, where $\Phi$ is a nonnegative measurable function on $E\times E$ such that $\lim_{\bar{x}\to x}\Phi(x,\bar{x})=\lim_{\bar{x}\to x}\Phi(\bar{x},x)=0$ for any $x\in E$. This inequality implies gradient estimates (hence, the strong Feller property), heat kernel estimates, the uniqueness of invariant probability measures, and the irreducibility of the Markov semigroup $\{P_t\}_{t\geq0}$; see the book \cite{Wa13} for more detailed theory of Harnack inequalities and applications to SDEs, SPDEs and stochastic functional (partial) differential equations. Unfortunately, it turns out that the strong Feller property is invalid for the lifted SEE associated with a Gaussian Volterra process, which is the simplest example of the SVIE \eqref{intro_eq_SVIE}, and hence the log-Harnack inequality does not hold in our framework; see \cref{Gauss_theo_strong-Feller}. A main difficulty of our framework is that the lifted SEE is \emph{highly degenerate}; the state space is (typically) infinite dimensional, while the underlying Brownian motion is finite dimensional. Similar difficulties arise in SPDEs with degenerate noise such as degenerate stochastic 2D Navier--Stokes equations. In order to prove the ergodicity for such degenerate stochastic systems which do not satisfy the strong Feller property, Hairer and Mattingly \cite{HaMa06} introduced the notion of the \emph{asymptotic strong Feller property} which is weaker than the strong Feller property. Then, as an alternative to the log-Harnack inequality, Xu \cite{Xu11} and Bao, Wang and Yuan \cite{BaWaYu19} investigated the \emph{asymptotic log-Harnack inequality} (which was introduced in \cite{Xu11} by the name ``modified log-Harnack inequality'', whereas the ``asymptotic log-Harnack inequality'' was named by \cite{BaWaYu19} taking into account the long-time asymptotic behaviour of the inequality). For a Markov semigroup $\{P_t\}_{t\geq0}$ on $\cB_\mathrm{b}(E)$, the asymptotic log-Harnack inequality is of type
\begin{equation*}
	P_t\log f(\bar{x})\leq\log P_tf(x)+\Phi(x,\bar{x})+\Psi_t(x,\bar{x})\|\nabla\log f\|_\infty
\end{equation*}
for any (sufficiently large) $t>0$, $x,\bar{x}\in E$ and any $f\in\cB_\mathrm{b}(E)$ such that $f\geq1$ and $\|\nabla\log f\|_\infty<\infty$. Here, $\|\nabla\log f\|_\infty$ represents the Lipschitz constant of $\log f$ with respect to a compatible metric on $E$, and $\Phi,\Psi_t:E\times E\to[0,\infty)$ are measurable maps such that $\lim_{t\to\infty}\Psi_t(x,\bar{x})=0$ for any $x,\bar{x}\in E$. Under some additional conditions for $\Phi$ and $\Psi$, the asymptotic log-Harnack inequality implies asymptotic gradient estimates (hence, the asymptotic strong Feller property), asymptotic heat kernel estimates, the uniqueness of invariant probability measures, and the asymptotic irreducibility of the Markov semigroup $\{P_t\}_{t\geq0}$; see \cite[Theorem 2.1]{BaWaYu19}. By means of the asymptotic coupling method, the asymptotic log-Harnack inequalities were established for the 2D stochastic Navier--Stokes equation driven by highly degenerate but essentially elliptic noise \cite{Xu11}, SDEs with infinite memory \cite{BaWaYu19,WaWuYiZh22}, SPDEs with degenerate multiplicative noise \cite{HoLiLi20}, monotone SPDEs with multiplicative noise \cite{Li20}, the stochastic Cahn--Hilliard equation with the logarithmic free energy with highly degenerate but essentially elliptic noise \cite{GoXi20}, the 3D stochastic Leray-$\alpha$ model with degenerate noise \cite{HoLiLi21a}, and 2D hydrodynamical-type systems with degenerate noise \cite{HoLiLi21b}.

In this paper, following the asymptotic coupling method adopted in the aforementioned papers, we derive an asymptotic log-Harnack inequality for the Hilbert-valued Markovian lift of the SVIE \eqref{intro_eq_SVIE}. We note that, although the lifted SEE can be seen as a kind of monotone SPDEs, it is beyond the frameworks of previous works mentioned above; see \cref{Harnack_rem_previous}. Thus, the construction of an asymptotic coupling is a non-trivial task. By means of the special structure of the lifted SEE, we succeed to obtain a positive result.

We have to point out that, however, our results on the asymptotic log-Harnack inequality for the lifted SEE (\cref{Harnack_theo_Harnack} and \cref{Harnack_cor_application}) are informative only for the case $\beta:=\inf\supp>0$. Loosely speaking, the number $\beta$ corresponds to the exponential decay rate of the kernel $K(t)$ as $t\to\infty$. On the one hand, our results are valid for the case of the Gamma kernel $K(t)=\frac{1}{\Gamma(\alpha)}e^{-\beta t}t^{\alpha-1}$ with $\alpha\in(\frac{1}{2},1)$ and $\beta>0$, which is a kind of singular (as $t\downarrow0$) kernel with the exponential damping property (as $t\to\infty$). On the other hand, the condition $\beta:=\inf\supp>0$ excludes the case of the fractional kernel (which is the Gamma kernel with $\beta=0$). We need further considerations for the case $\beta=0$, which we leave to the future research. Also, in this paper, we do not study the existence of the invariant probability measure for the lifted SEE associated with a general SVIE, and we leave this important topic to the future research as well.

The rest of the paper is organized as follows. In \cref{lift}, we introduce our new framework of Hilbert-valued Markovian lifting of SVIEs. We investigate fundamental properties of the state space of Markovian lifts in \cref{lift-space}. Then, we show the equivalence between SVIEs and lifted SEEs in \cref{lift-equivalence}, the well-posedness of SEEs in \cref{lift-well-posedness} (whose proofs are in \hyperref[appendix]{Appendix}), and the Markov property for the lifted SEE in \cref{lift-Markov}. \cref{Gauss} is devoted to studies on Gaussian Volterra processes and their Markovian lifts; we characterize the invariant probability measures for the lifted SEEs and show that the strong Feller property does not hold in typical cases. In \cref{Harnack}, we derive an asymptotic log-Harnack inequality for the lifted SEE. \hyperref[appendix]{Appendix} contains complete proofs of the results in \cref{lift-well-posedness}.

\subsection*{Notations}

$(\Omega,\cF,\bF,\bP)$ is a complete filtered probability space, where $\bF=(\cF_t)_{t\geq0}$ is a filtration satisfying the usual conditions. $W$ is a $d$-dimensional Brownian motion on $(\Omega,\cF,\bF,\bP)$ with $d\in\bN$. $\bE$ denotes the expectation under $\bP$. The expectation under another probability measure $\bQ$ on $(\Omega,\cF)$ is denoted by $\bE_\bQ$. For each set $A$, $\1_A$ denotes the indicator function. For a topological space $E$, we denote by $\cB(E)$ the Borel $\sigma$-algebra on $E$.

We fix $n\in\bN$, which represents the dimension of the solutions of SVIEs considered in this paper. $\bR^n$ is the Euclidean space of $n$-dimensional vectors, and we denote by $|v|$ and $\langle v_1,v_2\rangle$ the usual Euclidean norm and the inner product for vectors $v,v_1,v_2\in\bR^n$. $\bR^{n\times d}$ is the space of $(n\times d)$-matrices endowed with the Frobenius norm $|A|:=\sqrt{\mathrm{tr}\,(A A^\top)}$ for $A\in\bR^{n\times d}$, where $A^\top\in\bR^{d\times n}$ denotes the transpose of $A$, and $\mathrm{tr}(\cdot)$ denotes the trace.

For a Banach space $E$, $E^*$ denotes the continuous dual space of $E$, and ${}_{E^*}\langle\cdot,\cdot\rangle_E$ denotes the duality pairing. For a bounded linear operator $T:E_1\to E_2$ from a Banach space $E_1$ to another Banach space $E_2$, $T^*:E^*_2\to E^*_1$ denotes the adjoint operator.

We sometimes use the following notations without further explanations:
\begin{itemize}
\item
For two separable Hilbert spaces $H_1$ and $H_2$, we denote by $L_2(H_1;H_2)$ the space of Hilbert--Schmidt operators from $H_1$ to $H_2$ endowed with the Hilbert--Schmidt norm $\|\cdot\|_{L_2(H_1;H_2)}$.
\item
For each $T>0$, $L^2(0,T;\bR^n)$ denotes the space of equivalent classes of measurable and square-integrable functions $\varphi:[0,T]\to\bR^n$ endowed with the norm $\|\varphi\|_{L^2(0,T;\bR^n)}:=\big(\int^T_0|\varphi(t)|^2\,\diff t\big)^{1/2}$. The space $L^2(0,T;\bR^{n\times d})$ is defined by the same manner.
\item
For each $T>0$, $L^2_\bF(0,T;\bR^n)$ denotes the space of equivalent classes of predictable and $\bP\otimes\diff t$-square-integrable stochastic processes $\varphi:\Omega\times[0,T]\to\bR^n$ endowed with the norm $\|\varphi\|_{L^2_\bF(0,T;\bR^n)}:=\bE\big[\int^T_0|\varphi(t)|^2\,\diff t\big]^{1/2}$. The space $L^2_\bF(0,T;\bR^{n\times d})$ is defined by the same manner.
\item
For each $\sigma$-field $\cG\subset\cF$ and a separable Hilbert space $(H,\|\cdot\|_H)$, $L^2_\cG(H)$ denotes the space of equivalent classes of $\cG/\cB(H)$-measurable and $\bP$-square-integrable random variables $\varphi:\Omega\to H$ endowed with the norm $\|\varphi\|_{L^2_\cG(H)}:=\bE[\|\varphi\|^2_H]^{1/2}$.
\item
For a Borel measure $\mu$ on $[0,\infty)$ and $p\geq1$, $L^p(\mu)$ denotes the space of equivalent classes of $\bR^n$-valued measurable functions $\varphi$ on $[0,\infty)$ such that $\int_\supp|\varphi(\theta)|^p\dmu<\infty$ endowed with the norm $\|\varphi\|_{L^p(\mu)}:=\big(\int_\supp|\varphi(\theta)|^p\dmu\big)^{1/p}$. Here, $\supp$ denotes the support of $\mu$.
\end{itemize}
Other notations will be defined later.

%%%%%%%%%%%%%%%%%%%%%%%%%%%%%%%%%%
%%%%%%%%%%%%%%%%%%%%%%%%%%%%%%%%%%
%% Section
%%%%%%%%%%%%%%%%%%%%%%%%%%%%%%%%%%
%%%%%%%%%%%%%%%%%%%%%%%%%%%%%%%%%%

\section{Hilbert-valued Markovian lifting of SVIEs}\label{lift}

In this section, we introduce a suitable framework of Markovian lifts of SVIEs. We first analyse some basic properties of completely monotone kernels. Let $K:(0,\infty)\to[0,\infty)$ be a completely monotone kernel. By Bernstein's theorem, there exists a unique Radon measure $\mu$ on $[0,\infty)$ such that $K(t)=\int_{[0,\infty)}e^{-\theta t}\dmu$ for any $t>0$. We say that the kernel $K$ is \emph{regular} if $\lim_{t\downarrow0}K(t)<\infty$ and that $K$ is \emph{singular} if it is not regular.

%% Lemma

\begin{lemm}
Let $K:(0,\infty)\to[0,\infty)$ be a completely monotone kernel, and let $\mu$ be the corresponding Radon measure on $[0,\infty)$. Then the following assertions hold:
\begin{itemize}
\item[(i)]
$K$ is regular if and only if $\mu([1,\infty))<\infty$.
\item[(ii)]
Let $\alpha\in(0,1]$. Then $\int^1_0t^{\alpha-1}K(t)\,\diff t<\infty$ if and only if $\int_{[1,\infty)}\theta^{-\alpha}\dmu$. Furthermore, these conditions imply $\int^1_0K(t)^{1/\alpha}\,\diff t<\infty$.
\end{itemize}
\end{lemm}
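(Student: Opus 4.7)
The plan is to handle the two assertions by straightforward Fubini-type computations combined with the monotonicity of $K$, exploiting the Bernstein representation $K(t)=\int_{[0,\infty)}e^{-\theta t}\dmu$ throughout. For (i), I would apply monotone convergence to $t\mapsto K(t)$ as $t\downarrow 0$: since $e^{-\theta t}\uparrow 1$, we get $\lim_{t\downarrow 0}K(t)=\mu([0,\infty))$, so regularity amounts to finiteness of $\mu([0,\infty))$. Because $\mu$ is Radon on $[0,\infty)$, the compact set $[0,1]$ satisfies $\mu([0,1])<\infty$, hence $\mu([0,\infty))<\infty$ iff $\mu([1,\infty))<\infty$.

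For the equivalence in (ii), I would swap the order of integration by Tonelli to write
\[
\int^1_0 t^{\alpha-1}K(t)\,\diff t=\int_{[0,\infty)}f(\theta)\dmu,\qquad f(\theta):=\int^1_0 t^{\alpha-1}e^{-\theta t}\,\diff t.
\]
The function $f$ is continuous on $[0,\infty)$ with $f(0)=1/\alpha$; substituting $u=\theta t$ for $\theta>0$ yields $f(\theta)=\theta^{-\alpha}\int_0^\theta u^{\alpha-1}e^{-u}\,\diff u$, from which I read off that $f$ is bounded on $[0,1]$, and the sandwich $\int_0^1 u^{\alpha-1}e^{-u}\,\diff u\leq\int_0^\theta u^{\alpha-1}e^{-u}\,\diff u\leq\Gamma(\alpha)$ valid for $\theta\geq 1$ gives $f(\theta)\asymp\theta^{-\alpha}$ on $[1,\infty)$. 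Since $\mu([0,1))<\infty$ and $f$ is bounded there, finiteness of the full integral is controlled by the tail $\int_{[1,\infty)}f(\theta)\dmu$, which by the asymptotic comparison is finite if and only if $\int_{[1,\infty)}\theta^{-\alpha}\dmu<\infty$.

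For the final implication in (ii), the key observation is that one can extract a pointwise bound $K(t)\leq Ct^{-\alpha}$ on $(0,1]$ from the integrability hypothesis: since $K$ is completely monotone it is in particular decreasing, so
\[
\int_0^t s^{\alpha-1}K(s)\,\diff s\geq K(t)\int_0^t s^{\alpha-1}\,\diff s=\tfrac{t^\alpha}{\alpha}K(t),
\]
yielding $K(t)\leq\alpha M t^{-\alpha}$ on $(0,1]$ with $M:=\int_0^1 s^{\alpha-1}K(s)\,\diff s$. The interpolation $K(t)^{1/\alpha}=K(t)^{(1-\alpha)/\alpha}\cdot K(t)\leq(\alpha M)^{(1-\alpha)/\alpha}\,t^{\alpha-1}K(t)$ then reduces $\int_0^1 K(t)^{1/\alpha}\,\diff t<\infty$ to the hypothesis. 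The main obstacle is spotting precisely this interpolation: the naive pointwise bound $K(t)\leq Ct^{-\alpha}$ alone gives $K(t)^{1/\alpha}\leq C't^{-1}$, which is not integrable on $(0,1]$, so one must recouple the extra factor of $K$ with $t^{\alpha-1}$ rather than iterate the pointwise estimate.
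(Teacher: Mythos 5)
Your proof is correct, but it departs from the paper's argument at two points, both in part (ii). For the equivalence, the paper proves the two implications separately: the forward direction by Tonelli plus the upper bound $\int_0^1 t^{\alpha-1}e^{-\theta t}\,\diff t\leq\Gamma(\alpha)\theta^{-\alpha}$, and the reverse direction by a second Fubini argument that writes $\theta^{-\alpha}=\frac{1}{\Gamma(\alpha)}\int_0^\infty t^{\alpha-1}e^{-\theta t}\,\diff t$ and controls the truncated Laplace transform $\int_{[1,\infty)}e^{-\theta t}\dmu$ by $K(t)\wedge(e^{-t/2}K(t/2))$ to handle the tail $t\geq1$. Your single two-sided comparison $f(\theta)\asymp\theta^{-\alpha}$ on $[1,\infty)$, obtained from the substitution $u=\theta t$, delivers both implications at once and avoids the second Fubini step entirely; this is a mild but genuine streamlining. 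The bigger divergence is the final implication: the paper applies Minkowski's integral inequality to the Bernstein representation, getting $\big(\int_0^1K(t)^{1/\alpha}\,\diff t\big)^\alpha\leq\int_\supp\big(\int_0^1e^{-\theta t/\alpha}\,\diff t\big)^\alpha\dmu\leq\mu([0,1))+\int_{[1,\infty)}\theta^{-\alpha}\dmu$, starting from the measure-side condition. You instead start from the integral-side condition, extract the pointwise decay $K(t)\leq\alpha Mt^{-\alpha}$ from monotonicity of $K$, and interpolate $K(t)^{1/\alpha}=K(t)^{(1-\alpha)/\alpha}K(t)\leq(\alpha M)^{(1-\alpha)/\alpha}t^{\alpha-1}K(t)$; your remark that the naive iteration of the pointwise bound fails and that one must recouple the extra factor of $K$ with $t^{\alpha-1}$ is exactly the right observation. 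Your route is more elementary (it never uses the Bernstein representation in this step, only that $K$ is nonincreasing, so it would apply to any nonincreasing kernel), while the paper's Minkowski argument is shorter and yields an explicit constant directly in terms of $\mu$. Both are valid; the exponent arithmetic in your interpolation checks out, including the degenerate case $\alpha=1$.
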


%% Proof

\begin{proof}
The assertion (i) is clear. We show (ii). Suppose $\int_{[1,\infty)}\theta^{-a}\dmu<\infty$. Then
\begin{align*}
	\int^1_0t^{\alpha-1}K(t)\,\diff t&=\int^1_0t^{\alpha-1}\Big(\int_{[0,\infty)}e^{-\theta t}\dmu\Big)\,\diff t=\int_{[0,\infty)}\Big(\int^1_0t^{\alpha-1}e^{-\theta t}\,\diff t\Big)\dmu\\
	&\leq\alpha^{-1}\mu([0,1))+\Gamma(\alpha)\int_{[1,\infty)}\theta^{-\alpha}\dmu<\infty,
\end{align*}
and, by Minkowski's integral inequality,
\begin{align*}
	\Big(\int^1_0K(t)^{1/\alpha}\,\diff t\Big)^\alpha&=\Big(\int^1_0\Big(\int_\supp e^{-\theta t}\dmu\Big)^{1/\alpha}\,\diff t\Big)^\alpha\\
	&\leq\int_\supp\Big(\int^1_0e^{-\theta t/\alpha}\,\diff t\Big)^\alpha\dmu\\
	&\leq\mu([0,1))+\int_{[1,\infty)}\theta^{-\alpha}\dmu<\infty.
\end{align*}
Conversely, suppose $\int^1_0t^{\alpha-1}K(t)\,\diff t<\infty$. Then
\begin{align*}
	\int_{[1,\infty)}\theta^{-\alpha}\dmu&=\frac{1}{\Gamma(a)}\int_{[1,\infty)}\Big(\int^\infty_0t^{\alpha-1}e^{-\theta t}\,\diff t\Big)\dmu=\frac{1}{\Gamma(\alpha)}\int^\infty_0t^{\alpha-1}\underbrace{\Big(\int_{[1,\infty)}e^{-\theta t}\dmu\Big)}_{\leq K(t)\wedge(e^{-t/2}K(t/2))}\,\diff t\\
	&\leq\frac{1}{\Gamma(\alpha)}\int^1_0t^{\alpha-1}K(t)\,\diff t+\frac{K(1/2)}{\Gamma(\alpha)}\int^\infty_1t^{\alpha-1}e^{-t/2}\,\diff t<\infty.
\end{align*}
This completes the proof.
\end{proof}

The above lemma shows that the regularity/singularity of the kernel $K$ is characterized by the lightness/heaviness of the tail of the measure $\mu$. On the other hand, the value of $\inf\supp\geq0$ characterizes the asymptotic behaviour of $K(t)$ as $t\to\infty$. Indeed, for each $\beta\geq0$,
\begin{equation*}
	e^{\beta t}K(t)=\int_\supp e^{-(\theta-\beta)t}\dmu\to
	\begin{cases}
	\infty\ &\text{if $\beta>\inf\supp$,}\\
	\mu(\{\beta\})\ &\text{if $\beta=\inf\supp$,}\\
	0\ &\text{if $\beta<\inf\supp$,}
	\end{cases}
\end{equation*}
as $t\to\infty$. In particular, if $\inf\supp>0$, then $K(t)$ decays exponentially fast as $t\to\infty$.

With the above observations in mind, throughout this paper, we impose the following assumption on the kernel $K$ appearing in the SVIE \eqref{intro_eq_SVIE}.

%% Assumption

\begin{assum}\label{lift_assum_kernel}
$K:(0,\infty)\to[0,\infty)$ is completely monotone. Furthermore, the corresponding Radon measure $\mu$ on $[0,\infty)$ satisfies
\begin{equation*}
	\int_\supp r(\theta)\dmu<\infty
\end{equation*}
for some non-increasing Borel measurable function $r:[0,\infty)\to(0,\infty)$ such that $1\wedge(\theta^{-1/2})\leq r(\theta)\leq1$ for any $\theta\in[0,\infty)$.
\end{assum}

%% Remark

\begin{rem}
The function $r$ determines the regularity/singularity of the kernel $K$. In particular, the above assumption implies $\int^1_0t^{-1/2}K(t)\,\diff t<\infty$ and $\int^1_0K(t)^2\,\diff t<\infty$.
\end{rem}

Here, we give some important examples of kernels $K$ which satisfy \cref{lift_assum_kernel}.

%% Example

\begin{exam}\label{lift_exam_kernel}
\begin{itemize}
\item[(i)]
Let $K_{\exp}$ be a finite combination of exponential functions:
\begin{equation*}
	K_{\exp}(t):=\sum^k_{i=1}c_i e^{-\theta_i t},\ t>0,
\end{equation*}
with $k\in\bN$, $c_i>0$ and $\theta_i\in[0,\infty)$ for $i=1,\dots,k$. Then $K_{\exp}$ is completely monotone, and the corresponding Radon measure is
\begin{equation*}
	\mu_{\exp}(\diff\theta)=\sum^k_{i=1}c_i\delta_{\theta_i}(\diff\theta),\ \supp_{\exp}=\{\theta_1,\dots,\theta_k\},
\end{equation*}
where $\delta_{\theta_i}$ denotes the Dirac measure at point $\theta_i$. Clearly, $K_{\exp}$ is regular, and we can take $r\equiv1$ in \cref{lift_assum_kernel}.
\item[(ii)]
Let $K_\mathrm{frac}$ be the fractional kernel of order $\alpha\in(\frac{1}{2},1)$:
\begin{equation*}
	K_\mathrm{frac}(t):=\frac{1}{\Gamma(\alpha)}t^{\alpha-1},\ t>0.
\end{equation*}
It is well-known that $K_\mathrm{frac}$ is completely monotone, and the corresponding Radon measure is
\begin{equation*}
	\mu_\mathrm{frac}(\theta)=\frac{1}{\Gamma(\alpha)\Gamma(1-\alpha)}\theta^{-\alpha}\,\diff\theta,\ \supp_\mathrm{frac}=[0,\infty).
\end{equation*}
The fractional kernel $K_\mathrm{frac}$ is singular, and we can take $r(\theta)=1\wedge(\theta^{-1/p})$ in \cref{lift_assum_kernel} for any $p\in[2,\frac{1}{1-\alpha})$.
\item[(iii)]
Let $K_\mathrm{gamma}$ be the Gamma kernel of the form
\begin{equation*}
	K_\mathrm{gamma}(t):=\frac{1}{\Gamma(\alpha)}e^{-\beta t}t^{\alpha-1},\ t>0,
\end{equation*}
for some $\beta>0$ and $\alpha\in(\frac{1}{2},1)$. $K_\mathrm{gamma}$ is completely monotone, and the corresponding Radon measure is
\begin{equation*}
	\mu_\mathrm{gamma}(\diff\theta)=\frac{1}{\Gamma(\alpha)\Gamma(1-\alpha)}(\theta-\beta)^{-\alpha}\1_{(\beta,\infty)}\diff\theta,\ \supp_\mathrm{gamma}=[\beta,\infty).
\end{equation*}
The Gamma kernel $K_\mathrm{gamma}$ is singular, and we can take $r(\theta)=1\wedge(\theta^{-1/p})$ in \cref{lift_assum_kernel} for any $p\in[2,\frac{1}{1-\alpha})$.
\item[(iv)]
More generally, for any $\beta>0$ and any kernel $K$ satisfying \cref{lift_assum_kernel}, the exponentially damped kernel
\begin{equation*}
	K_\mathrm{damp}(t):=e^{-\beta t}K(t),\ t>0,
\end{equation*}
satisfies \cref{lift_assum_kernel} with the same function $r$. The corresponding Radon measure is
\begin{equation*}
	\mu_\mathrm{damp}(\diff\theta)=\1_{[\beta,\infty)}(\theta)\mu(\diff\theta-\beta),\ \supp_\mathrm{damp}=\{\theta\in[\beta,\infty)\,|\,\theta-\beta\in\supp\}.
\end{equation*}
\item[(v)]
For any $\delta>0$ and any completely monotone kernel $K$ with the corresponding Radon measure $\mu$, the sifted kernel
\begin{equation*}
	K_\mathrm{shift}(t):=K(t+\delta),\ t>0,
\end{equation*}
satisfies \cref{lift_assum_kernel} with the function $r\equiv1$. In particular, the kernel $K_\mathrm{shift}$ is regular. The corresponding Radon measure is
\begin{equation*}
	\mu_\mathrm{shift}(\diff\theta)=e^{-\delta\theta}\dmu,\ \supp_\mathrm{shift}=\supp.
\end{equation*}
\end{itemize}
\end{exam}

Let \cref{lift_assum_kernel} hold. Consider the SVIE \eqref{intro_eq_SVIE} on $\bR^n$, which we write here again:
\begin{equation}\label{lift_eq_SVIE}
	X_t=x(t)+\int^t_0K(t-s)b(X_s)\,\diff s+\int^t_0K(t-s)\sigma(X_s)\,\diff W_s,\ t>0.
\end{equation}
Consider the case of $x(t)=(\cK y)(t)=\int_\supp e^{-\theta t}y(\theta)\dmu$ for some function $y:\supp\to\bR^n$. As a lift of the above SVIE, we introduce the following SEE defined on some function space:
\begin{equation}\label{lift_eq_SEE}
	\begin{dcases}
	\diff Y_t(\theta)=-\theta Y_t(\theta)\,\diff t+b(\mu[Y_t])\,\diff t+\sigma(\mu[Y_t])\,\diff W_t,\ t>0,\ \theta\in\supp,\\
	Y_0(\theta)=y(\theta),\ \theta\in\supp,
	\end{dcases}
\end{equation}
where $\mu[\eta]\text{``$=$''}\int_\supp \eta(\theta)\,\dmu$ for a suitable function $\eta:\supp\to\bR^n$; see \eqref{lift_eq_mu-map} for the precise definition of the map $\mu[\cdot]$. We introduce a suitable state space of the lift $Y$ in which the lifted SEE \eqref{lift_eq_SEE} is well-posed, and rigorously prove the equivalence of the SVIE \eqref{lift_eq_SVIE} and the lifted SEE \eqref{lift_eq_SEE}. Furthermore, we show that the solution $Y$ of the lifted SEE \eqref{lift_eq_SEE} is Markovian in the proposed state space.

%%%%%%%%%%%%%%
%% Subsection
%%%%%%%%%%%%%%

\subsection{State space of Markovian lifts}\label{lift-space}

First, we introduce the state space of the lifted process $Y$. Under \cref{lift_assum_kernel}, for each Borel measurable function $y:\supp\to\bR^n$, define
\begin{equation*}
	\|y\|_{\cH_\mu}:=\Big(\int_\supp r(\theta)|y(\theta)|^2\dmu\Big)^{1/2}
\end{equation*}
and
\begin{equation*}
	\|y\|_{\cV_\mu}:=\Big(\int_\supp (\theta+1)r(\theta)|y(\theta)|^2\dmu\Big)^{1/2}.
\end{equation*}
We denote by $\cH_\mu$ the space of equivalent classes of Borel measurable functions $y:\supp\to\bR^n$ such that $\|y\|_{\cH_\mu}<\infty$. Similarly, we denote by $\cV_\mu$ the space of equivalent classes of Borel measurable functions $y:\supp\to\bR^n$ such that $\|y\|_{\cV_\mu}<\infty$. Then both $(\cH_\mu,\|\cdot\|_{\cH_\mu})$ and $(\cV_\mu,\|\cdot\|_{\cV_\mu})$ are separable reflexive Banach spaces. Furthermore, we equip $\cH_\mu$ with the natural inner product
\begin{equation*}
	\langle y_1, y_2\rangle_{\cH_\mu}:=\int_\supp r(\theta)\langle y_1(\theta),y_2(\theta)\rangle\dmu,\ y_1,y_2\in\cH_\mu,
\end{equation*}
hence $(\cH_\mu,\|\cdot\|_{\cH_\mu},\langle\cdot,\cdot\rangle_{\cH_\mu})$ is a separable Hilbert space. It is easy to see that $\cV_\mu\subset\cH_\mu$ continuously and densely. Denote the continuous dual spaces of $\cH_\mu$ and $\cV_\mu$ by $\cH^*_\mu$ and $\cV^*_\mu$, respectively. By means of the Riesz isomorphism, we identify $\cH_\mu$ with $\cH^*_\mu$. Then we see that $\cH_\mu\subset\cV^*_\mu$ continuously and densely. The duality pairing $\dual{\cdot}{\cdot}$ is then compatible with the inner product on $\cH_\mu$ in the sense that $\dual{y'}{y}=\langle y',y\rangle_{\cH_\mu}$ whenever $y\in\cV_\mu\subset\cH_\mu$ and $y'\in\cH_\mu\subset\cV^*_\mu$. From this compatibility, it can be easily shown that $(\cV^*_\mu,\|\cdot\|_{\cV^*_\mu})$ is identified with the space of equivalent classes of Borel measurable functions $y:\supp\to\bR^n$ such that $\|y\|_{\cV^*_\mu}<\infty$, where
\begin{equation*}
	\|y\|_{\cV^*_\mu}=\Big(\int_\supp (\theta+1)^{-1}r(\theta)|y(\theta)|^2\dmu\Big)^{1/2}.
\end{equation*}
By the above constructions, we obtain a Gelfand triplet of Hilbert spaces:
\begin{equation*}
	\cV_\mu\hookrightarrow\cH_\mu\hookrightarrow\cV^*_\mu.
\end{equation*}
We adopt the Hilbert space $\cH_\mu$ (or the Gelfand triplet $\cV_\mu\hookrightarrow\cH_\mu\hookrightarrow\cV^*_\mu$) as the state space of Markovian lifts of SVIEs.

%% Remark

\begin{rem}
\begin{itemize}
\item[(i)]
Although $\cV_\mu$ and $\cV^*_\mu$ become Hilbert spaces with suitable inner products, these Hilbert space-structures are unnecessary in this paper.
\item[(ii)]
As we mentioned in \hyperref[intro]{Introduction}, regarding the state space of Markovian lifts of SVIEs, Carmona and Coutin \cite{CaCu98} and Abi Jaber, Miller and Pham \cite{AbiJaMiPh21} adopted the space $L^1(\mu)$, while Cuchiero and Teichmann \cite{CuTe19,CuTe20} adopted the space of signed (or vector-valued) Borel measures. In their approaches, the (stochastic) calculus of lifted equations are difficult due to the lack of Hilbert space-structure. In contrast, it will be turned out that our new framework fits into the well-established framework of SPDEs and It\^{o}'s calculus in Hilbert spaces.  We remark that, unlike \cite{AbiJaMiPh21,CaCu98,CuTe19,CuTe20}, the definitions of the spaces $\cH_\mu$, $\cV_\mu$ and $\cV^*_\mu$ take into account the singularity/regularity of the kernel via the weight function $r$.
\end{itemize}
\end{rem}

Here, we show some important properties of the function spaces defined above.

%% Lemma

\begin{lemm}\label{lift_lemm_space}
Let \cref{lift_assum_kernel} hold.
\begin{itemize}
\item[(i)]
Each constant vector $b\in\bR^n$ belongs to $\cH_\mu$ and satisfies $\|b\|_{\cH_\mu}=\big(\int_\supp r(\theta)\dmu\big)^{1/2}|b|$. Each constant matrix $\sigma\in\bR^{n\times d}$ belongs to the space $L_2(\bR^d;\cH_\mu)$ of Hilbert--Schmidt operators from $\bR^d$ to $\cH_\mu$ and satisfies $\|\sigma\|_{L_2(\bR^d;\cH_\mu)}=\big(\int_\supp r(\theta)\dmu\big)^{1/2}|\sigma|$.
\item[(ii)]
The map $e^{-\cdot t}:y\mapsto(\theta\mapsto e^{-\theta t}y(\theta))$ is a linear contraction operator on $\cH_\mu$ for any $t\geq0$. Furthermore, $\{e^{-\cdot t}\}_{t\geq0}$ is a $C_0$-semigroup on $\cH_\mu$ with the infinitesimal generator $A:\cD(A)\to\cH_\mu$ given by
\begin{equation*}
	\begin{dcases}
	\cD(A)=\left\{y\in\cH_\mu\relmiddle|\int_\supp \theta^2r(\theta)|y(\theta)|^2\dmu<\infty\right\},\\
	(Ay)(\theta)=-\theta y(\theta),\ \theta\in\supp,\ y\in\cD(A).
	\end{dcases}
\end{equation*}
The map $A:\cD(A)\to\cH_\mu\hookrightarrow\cV^*_\mu$ can be uniquely extended to a linear contraction operator (again denoted by $A$) from $\cV_\mu$ to $\cV^*_\mu$. Furthermore, we have
\begin{equation*}
	\dual{Ay}{y}=-\|y\|^2_{\cV_\mu}+\|y\|^2_{\cH_\mu}\leq0
\end{equation*}
for any $y\in\cV_\mu$.
\item[(iii)]
Any $y\in\cV_\mu$ is $\mu$-integrable and satisfies
\begin{equation*}
	\int_\supp |y(\theta)|\dmu\leq\Big(\int_\supp r(\theta)\dmu\Big)^{1/2}\|y\|_{\cV_\mu}.
\end{equation*}
Furthermore, for any $\ep>0$, there exists a constant $M=M(\mu,\ep)>0$ such that
\begin{equation*}
	\Big(\int_\supp |y(\theta)|\dmu\Big)^2\leq\ep\|y\|^2_{\cV_\mu}+M\|y\|^2_{\cH_\mu}
\end{equation*}
for any $y\in\cV_\mu$.
\end{itemize}
\end{lemm}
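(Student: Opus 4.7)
Both assertions are direct computations from the definitions. For a constant $b\in\bR^n$, Assumption~\ref{lift_assum_kernel} lets me factor $|b|^2$ out of $\int_\supp r(\theta)|b|^2\dmu$ to obtain the claimed norm. For a constant $\sigma\in\bR^{n\times d}$ regarded as an operator $\bR^d\to\cH_\mu$, I compute the Hilbert--Schmidt norm along the canonical basis $\{e_i\}_{i=1}^d$ of $\bR^d$, apply the previous calculation to each column, and invoke the Frobenius identity $\sum_i|\sigma e_i|^2=|\sigma|^2$.

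\textbf{Plan for part (ii).} Pointwise multiplication by $e^{-\theta t}$ is a contraction on $\cH_\mu$ by $e^{-\theta t}\le1$ and satisfies the semigroup property. Strong continuity at $t=0^+$ follows from dominated convergence with majorant $4r(\theta)|y(\theta)|^2\in L^1(\mu)$. To identify the generator with the multiplication operator described in the statement, I use the mean value estimate $|t^{-1}(e^{-\theta t}-1)|\le\theta$ and DCT with dominant $\theta^2 r(\theta)|y(\theta)|^2$ to show that any $y$ in the stated domain produces the derivative $-\theta y(\theta)$ in $\cH_\mu$; the converse inclusion follows from closedness of the multiplication operator together with the fact that a $C_0$-semigroup uniquely determines its generator.

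The extension of $A$ to a contraction $\cV_\mu\to\cV^*_\mu$ is immediate from $\theta^2/(\theta+1)\le\theta+1$ applied in the weighted $L^2$-description of $\cV^*_\mu$. To establish the energy identity, I first treat $y\in\cD(A)$, where $Ay\in\cH_\mu$ and the duality pairing coincides with the $\cH_\mu$-inner product, so a direct computation gives
\[
\dual{Ay}{y}=-\int_\supp\theta r(\theta)|y(\theta)|^2\dmu=\|y\|^2_{\cH_\mu}-\|y\|^2_{\cV_\mu}.
\]
For general $y\in\cV_\mu$ I approximate by truncations $y_N:=y\1_{\{\theta\le N\}}$, which lie in $\cD(A)$ (since $\theta$ is bounded on their support) and converge to $y$ in $\cV_\mu$ by dominated convergence; continuity of the bilinear form $\dual{A\cdot}{\cdot}$ on $\cV_\mu$ and of both norms transfers the identity to the limit.

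\textbf{Plan for part (iii).} The key structural inequality is $r(\theta)\ge(1+\theta)^{-1/2}$, which I derive from the hypothesis $r(\theta)\ge1\wedge\theta^{-1/2}$ together with the elementary bound $1\wedge\theta^{-1/2}\ge(1+\theta)^{-1/2}$. Rewriting this as $\frac{1}{(\theta+1)r(\theta)}\le r(\theta)$ and applying the Cauchy--Schwarz inequality to the factorization $|y|=|y|\sqrt{(\theta+1)r}\cdot\bigl((\theta+1)r\bigr)^{-1/2}$ yields
\[
\int_\supp|y|\dmu\le\Bigl(\int_\supp(\theta+1)r|y|^2\dmu\Bigr)^{1/2}\Bigl(\int_\supp\tfrac{1}{(\theta+1)r}\dmu\Bigr)^{1/2}\le\|y\|_{\cV_\mu}\Bigl(\int_\supp r\dmu\Bigr)^{1/2}.
\]
For the $\ep$-$M$ inequality I split the integral at a threshold $N$: on the tail $\{\theta>N\}$ the same Cauchy--Schwarz yields the bound $\|y\|_{\cV_\mu}(\int_{\theta>N}r\dmu)^{1/2}$, and $\int_{\theta>N}r\dmu\to0$ by dominated convergence; on the bounded piece $\{\theta\le N\}$ the monotonicity of $r$ gives $r(\theta)\ge r(N)>0$, so Cauchy--Schwarz in the form $|y|=(|y|\sqrt{r})\cdot(1/\sqrt{r})$ produces a bound $C_N\|y\|_{\cH_\mu}$ with $C_N$ finite (since $\mu([0,N])\le r(N)^{-1}\int r\dmu<\infty$). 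Combining via $(A+B)^2\le2A^2+2B^2$ and choosing $N$ so that $2\int_{\theta>N}r\dmu\le\ep$ completes the proof. The one genuinely non-routine step of the lemma is recognizing the inequality $r(\theta)\ge(1+\theta)^{-1/2}$ as the structural fact tying the weight $r$ to the Gelfand triple; once isolated, the remaining arguments are standard.
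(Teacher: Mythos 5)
Your proposal is correct, and parts (i) and most of (ii) follow the paper's route: direct computation for (i), contractivity plus dominated convergence for the semigroup, the difference-quotient estimate for the inclusion $\cD(\bar A)\subset\cD(A)$, the bound $\theta^2/(\theta+1)\le\theta+1$ for the extension to $\cV_\mu\to\cV^*_\mu$, and the energy identity on $\cD(A)$ extended by density (your explicit truncations $y\1_{\{\theta\le N\}}$ in fact supply the density argument the paper takes for granted). The two places where you genuinely diverge are both fine. First, for the reverse inclusion $\cD(A)\subset\cD(\bar A)$ the paper uses Fatou's lemma on the difference quotients, which is self-contained; you instead appeal to the standard resolvent/uniqueness argument for multiplication semigroups — the operative fact is not closedness per se but that $1-\bar A$ is surjective (multiplication by $(1+\theta)^{-1}$ inverts it) while $1-A$ is injective by Hille--Yosida, so the extension $\bar A\subset A$ forces equality; if you write it out, make that the stated reason. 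Second, for the $\ep$--$M$ inequality in (iii) the paper uses a single Cauchy--Schwarz with the parametrized weight $(\theta+m)^{-1}r(\theta)^{-1}\le r(\theta)$ and lets $m\to\infty$, whereas you split at a threshold $N$ and treat head and tail separately; both hinge on the same structural bound $(\theta+1)^{-1}r(\theta)^{-1}\le r(\theta)$ (equivalently your $r(\theta)\ge(1+\theta)^{-1/2}$) and on $\int_\supp r(\theta)\dmu<\infty$. One small remark: your head estimate invokes the monotonicity of $r$ via $r(\theta)\ge r(N)$ on $[0,N]$, whereas the paper points out (Remark after the lemma) that this lemma does not need monotonicity; you could instead bound $r(\theta)\ge 1\wedge N^{-1/2}$ on $[0,N]$ directly from the assumed lower bound, which keeps your proof at the same level of generality.
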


%% Proof

\begin{proof}
The assertion (i) is trivial from the definition.

For the proof of (ii), noting that $\supp\subset[0,\infty)$, we have
\begin{equation*}
	\|e^{-\cdot t}y\|_{\cH_\mu}=\Big(\int_\supp r(\theta)e^{-2\theta t}|y(\theta)|^2\dmu\Big)^{1/2}\leq\Big(\int_\supp r(\theta)|y(\theta)|^2\dmu\Big)^{1/2}=\|y\|_{\cH_\mu}
\end{equation*}
for any $y\in\cH_\mu$ and $t\geq0$. Thus, $e^{-\cdot t}$ is contractive on $\cH_\mu$ for any $t\geq0$. The semigroup property of $\{e^{-\cdot t}\}_{t\geq0}$ is trivial. For any $y\in\cH_\mu$, by the dominated convergence theorem,
\begin{equation*}
	\lim_{t\downarrow0}\|e^{-\cdot t}y-y\|_{\cH_\mu}=\lim_{t\downarrow 0}\Big(\int_\supp r(\theta)(e^{-\theta t}-1)^2|y(\theta)|^2\dmu\Big)^{1/2}=0.
\end{equation*}
Therefore, the semigroup $\{e^{-\cdot t}\}_{t\geq0}$ is strongly continuous. Now let $A:\cD(A)\to\cH_\mu$ be the infinitesimal generator of $\{e^{-\cdot t}\}_{t\geq0}$, and define $\bar{A}:\cD(\bar{A})\to\cH_\mu$ by
\begin{equation*}
	\begin{dcases}
	\cD(\bar{A}):=\left\{y\in\cH_\mu\relmiddle|\int_\supp \theta^2r(\theta)|y(\theta)|^2\dmu<\infty\right\},\\
	(\bar{A}y)(\theta):=-\theta y(\theta),\ \theta\in\supp,\ y\in\cD(\bar{A}).
	\end{dcases}
\end{equation*}
Let $y\in\cD(\bar{A})$. By the dominated convergence theorem, we see that
\begin{align*}
	\Big\|\frac{1}{t}\{e^{-\cdot t}y-y\}-\bar{A}y\Big\|^2_{\cH_\mu}&=\int_\supp r(\theta)\Big|\frac{1}{t}\{e^{-\theta t}y(\theta)-y(\theta)\}+\theta y(\theta)\Big|^2\dmu\\
	&=\int_\supp\Big\{1-\frac{1}{t}\int^t_0e^{-\theta s}\,\diff s\Big\}^2\theta^2r(\theta)|y(\theta)|^2\dmu\to0
\end{align*}
as $t\downarrow0$. Thus, $y\in\cD(A)$ and $\bar{A}y=Ay$. Conversely, for any $y\in\cD(A)$, by Fatou's lemma,
\begin{align*}
	\int_\supp\theta^2r(\theta)|y(\theta)|^2\dmu&=\int_\supp r(\theta)\lim_{t\downarrow0}\Big|\frac{1}{t}\{e^{-\theta t}y(\theta)-y(\theta)\}\Big|^2\dmu\\
	&\leq\liminf_{t\downarrow0}\int_\supp r(\theta)\Big|\frac{1}{t}\{e^{-\theta t}y(\theta)-y(\theta)\}\Big|^2\dmu\\
	&=\int_\supp r(\theta)|(Ay)(\theta)|^2\dmu<\infty.
\end{align*}
Hence, $y\in\cD(\bar{A})$. Therefore, $\cD(A)=\cD(\bar{A})$ and $Ay=\bar{A}y$ for any $y\in\cD(A)$. Furthermore, for any $y\in\cD(A)$,
\begin{align*}
	\|Ay\|_{\cV^*_\mu}&=\Big(\int_\supp(\theta+1)^{-1}r(\theta)|-\theta y(\theta)|^2\dmu\Big)^{1/2}\leq\Big(\int_\supp(\theta+1)r(\theta)|y(\theta)|^2\dmu\Big)^{1/2}=\|y\|_{\cV_\mu}.
\end{align*}
Since $\cD(A)$ is dense in $(\cV_\mu,\|\cdot\|_{\cV_\mu})$, the operator $A$ (more precisely, the composition of $A:\cD(A)\to\cH_\mu$ and the embedding $\cH_\mu\hookrightarrow\cV^*_\mu$) can be uniquely extended to a contraction operator $A:\cV_\mu\to\cV^*_\mu$. Furthermore, for any $y\in\cD(A)\subset\cV_\mu$, we have
\begin{align*}
	\dual{Ay}{y}&=\langle Ay,y\rangle_{\cH_\mu}\\
	&=-\int_\supp\theta r(\theta)|y(\theta)|^2\dmu\\
	&=-\int_\supp(\theta+1)r(\theta)|y(\theta)|^2\dmu+\int_\supp r(\theta)|y(\theta)|^2\dmu\\
	&=-\|y\|^2_{\cV_\mu}+\|y\|^2_{\cH_\mu}.
\end{align*}
By the density of $\cD(A)$ in $\cV_\mu$ and the continuity of $A:\cV_\mu\to\cV^*_\mu$, we get the last assertion of (ii).

We show (iii). For any $y\in\cV_\mu$, by the Cauchy--Schwarz inequality, we have
\begin{equation*}
	\int_\supp|y(\theta)|\dmu\leq\Big(\int_\supp(\theta+1)^{-1}r(\theta)^{-1}\dmu\Big)^{1/2}\Big(\int_\supp(\theta+1)r(\theta)|y(\theta)|^2\dmu\Big)^{1/2}.
\end{equation*}
By the assumption $r(\theta)\geq1\wedge(\theta^{-1/2})$ and the following observation:
\begin{equation*}
	(\theta+1)^{-1}r(\theta)^{-1}\leq(\theta+1)^{-1}(1\wedge(\theta^{-1/2}))^{-1}\leq1\wedge(\theta^{-1/2})\leq r(\theta),
\end{equation*}
we have
\begin{equation*}
	\int_\supp|y(\theta)|\dmu\leq\Big(\int_\supp r(\theta)\dmu\Big)^{1/2}\|y\|_{\cV_\mu}<\infty.
\end{equation*}
Similarly, for any $y\in\cV_\mu$ and any $m\geq1$,
\begin{align*}
	\Big(\int_\supp|y(\theta)|\dmu\Big)^2&\leq\int_\supp(\theta+m)^{-1}r(\theta)^{-1}\dmu\int_\supp(\theta+m)r(\theta)|y(\theta)|^2\dmu\\
	&=\int_\supp(\theta+m)^{-1}r(\theta)^{-1}\dmu\big(\|y\|^2_{\cV_\mu}+(m-1)\|y\|^2_{\cH_\mu}\big).
\end{align*}
Since $(\theta+m)^{-1}r(\theta)^{-1}\leq(\theta+1)^{-1}r(\theta)^{-1}\leq r(\theta)$ and $\int_\supp r(\theta)\dmu<\infty$, by the dominated convergence theorem, we see that $\lim_{m\to\infty}\int_\supp(\theta+m)^{-1}r(\theta)^{-1}\dmu=0$. Thus, for any $\ep>0$, there exists a constant $m=m(\mu,\ep)\geq1$ such that
\begin{equation*}
	\int_\supp(\theta+m)^{-1}r(\theta)^{-1}\dmu\leq\ep.
\end{equation*}
Then, by letting $M:=\ep(m-1)$, we get the assertion (iii).
\end{proof}

%% Remark

\begin{rem}\label{lift_rem_space}
\begin{itemize}
\item[(i)]
We do not need the conditions that $r$ is non-increasing or $r\leq1$ for \cref{lift_lemm_space} (and \cref{lift_lemm_convolution} below). Such conditions will be crucial in \cref{Harnack}.
\item[(ii)]
By \cref{lift_lemm_space} (ii), we have the continuous embeddings $\cV_\mu\hookrightarrow \cH^1_\mu\hookrightarrow\cH_\mu$, where $(\cH^1_\mu,\|\cdot\|_{\cH^1_\mu})$ is a separable Banach space defined by $\cH^1_\mu:=\cH_\mu\cap L^1(\mu)$ and $\|\cdot\|_{\cH^1_\mu}:=\|\cdot\|_{\cH_\mu}+\|\cdot\|_{L^1_\mu}$. Also, noting that $r(\theta)\leq 1\leq(\theta+1)r(\theta)$, we have the continuous embeddings $\cV_\mu\hookrightarrow L^2(\mu)\hookrightarrow \cH_\mu$. The spaces $\cH_\mu$ and $L^1(\mu)$ have no inclusion-relation in general. On the one hand, if the kernel $K$ is singular, or equivalently if $\mu([1,\infty))=\infty$, then we have $\cH_\mu\setminus L^1(\mu)\neq\emptyset$. Indeed, any non-zero constant vectors belong to $\cH_\mu\setminus L^1(\mu)$. On the other hand, if there exists a Borel set $B\subset\supp$ such that $\mu(B)>0$ and $\mu$ is atomless on $B$, then it can be easily shown that $L^1(\mu)\setminus\cV^*_\mu\neq\emptyset$, and in particular $L^1(\mu)\setminus\cH_\mu\neq\emptyset$.
\item[(iii)]
It is easy to see that the closed operator $A:\cD(A)\to\cH_\mu$ is self-adjoint and nonpositive. The sets of spectra, point spectra and continuous spectra of the nonnegative operator $-A$ are given by
\begin{align*}
	&\mathrm{spec}\,(-A)=\supp,\\
	&\mathrm{spec}_\mathrm{p}(-A)=\{\theta\in\supp\,|\,\mu(\{\theta\})>0\}\ \text{and}\\
	&\mathrm{spec}_\mathrm{c}(-A)=\{\theta\in\supp\,|\,\mu(\{\theta\})=0\},
\end{align*}
respectively.
\item[(iv)]
We remark on some measurability issues. Since $\cH_\mu$ is a separable Hilbert space, by the Pettis measurability theorem, a function $f:S\to\cH_\mu$ defined on a measurable space $(S,\cS)$ is (strongly) measurable (with respect to $\cS$ and $\cB(\cH_\mu)$) if and only if it is weakly measurable (i.e., $s\mapsto\langle y,f(s)\rangle_{\cH_\mu}$ is measurable for any $y\in\cH_\mu$). Furthermore, since $\cV_\mu\subset\cH_\mu\subset\cV^*_\mu$ are Polish spaces with continuous embeddings, the Lusin--Suslin theorem yields that $\cV_\mu\in\cB(\cH_\mu)$, $\cH_\mu\in\cB(\cV^*_\mu)$, and $\cB(\cV_\mu)=\{A\cap\cV_\mu\,|\,A\in\cB(\cH_\mu)\}$, $\cB(\cH_\mu)=\{A\cap\cH_\mu\,|\,A\in\cB(\cV^*_\mu)\}$. In particular, the restriction map $y\mapsto\1_{\cV_\mu}(y)y$ from $\cH_\mu$ to $\cV_\mu$ is $\cB(\cH_\mu)/\cB(\cV_\mu)$-measurable. Also, the map $\|\cdot\|_{\cV_\mu}:\cH_\mu\to[0,\infty]$ (which is the norm on $\cV_\mu$ and takes $\infty$ on $\cH_\mu\setminus\cV_\mu$) is $\cB(\cH_\mu)$-measurable. Similarly, we have $\cH^1_\mu\in\cB(\cH_\mu)$ and $\cB(\cH^1_\mu)=\{A\cap L^1(\mu)\,|\,A\in\cB(\cH_\mu)\}$, and the restriction map $y\mapsto\1_{\cH^1_\mu}(y)y$ from $\cH_\mu$ to $\cH^1_\mu$ is $\cB(\cH_\mu)/\cB(\cH^1_\mu)$-measurable.
\end{itemize}
\end{rem}

In our studies of the SVIE \eqref{lift_eq_SVIE} and the lifted SEE \eqref{lift_eq_SEE}, the integral $\int_\supp y(\theta)\dmu$ of a function $y:\supp\to\bR^n$ with respect to $\mu$ plays a key role. Recall that $(\cH^1_\mu,\|\cdot\|_{\cH^1_\mu})$ is a separable Banach space defined by $\cH^1_\mu:=\cH_\mu\cap L^1(\mu)$ and $\|\cdot\|_{\cH^1_\mu}:=\|\cdot\|_{\cH_\mu}+\|\cdot\|_{L^1(\mu)}$. Clearly, the integral operator $\mu_0[\cdot]:\cH^1_\mu\to\bR^n$ defined by
\begin{equation*}
	\mu_0[y]:=\int_\supp y(\theta)\dmu,\ y\in\cH^1_\mu,
\end{equation*}
is a bounded linear operator from $\cH^1_\mu$ to $\bR^n$, and in particular it is $\cB(\cH^1_\mu)/\cB(\bR^n)$-measurable. We extend this map to $\cH_\mu$ by the following way: For each $y\in\cH_\mu$, we define $\mu[y]\in\bR^n$ by
\begin{equation}\label{lift_eq_mu-map}
	\mu[y]:=\mu_0[\1_{\cH^1_\mu}(y)y]=
	\begin{dcases}
	\int_\supp y(\theta)\dmu\ &\text{if $y\in L^1(\mu)$},\\
	0\ &\text{if $y\notin L^1(\mu)$}.
	\end{dcases}
\end{equation}
By the $\cB(\cH_\mu)/\cB(\cH^1_\mu)$-measurability of the restriction map $y\mapsto\1_{\cH^1_\mu}(y)y$ (see \cref{lift_rem_space} (iv)), we see that the map $\mu[\cdot]:\cH_\mu\to\bR^n$ is $\cB(\cH_\mu)/\cB(\bR^n)$-measurable. In particular, for any $\cH_\mu$-valued predictable process $Y$, the ``integrated process'' $\mu[Y]=(\mu[Y_t])_{t\geq0}$ is an $\bR^n$-valued predictable process. On the one hand, since $\cV_\mu\subset\cH^1_\mu$ continuously (see \cref{lift_lemm_space} (iii)), we have $\mu[\cdot]=\mu_0[\cdot]$ on $\cV_\mu$, and it is a bounded linear operator from $\cV_\mu$ to $\bR^n$. On the other hand, the measurable map $\mu[\cdot]:\cH_\mu\to\bR^n$ itself is neither continuous nor linear on $\cH_\mu$ in general.
%If we can take $r\equiv1$ in \cref{lift_assum_kernel} (which is possible if and only if the kernel $K$ is regular), then $\cH_\mu=L^2(\mu)\hookrightarrow L^1(\mu)$, and thus the map $\mu:\cH_\mu\to\bR^n$ becomes a bounded linear operator from $\cH_\mu$ to $\bR^n$.

Next, for any $y\in\cH_\mu$, we define
\begin{equation}\label{lift_eq_K-map}
	(\cK y)(t):=\mu[e^{-\cdot t}y],\ t\geq0.
\end{equation}
Then $\cK y:[0,\infty)\to\bR^n$ is Borel measurable. Note that, for any $t>0$ and any $y\in\cH_\mu$, we have
\begin{align*}
	\|e^{-\cdot t}y\|^2_{\cV_\mu}&=\int_\supp(\theta+1)r(\theta)e^{-2\theta t}|y(\theta)|^2\dmu\\
	&\leq\int_\supp \frac{\theta+1}{2\theta t+1}r(\theta)|y(\theta)|^2\dmu\\
	&\leq\Big(\frac{1}{2t}\vee1\Big)\int_\supp r(\theta)|y(\theta)|^2\dmu\\
	&=\Big(\frac{1}{2t}\vee1\Big)\|y\|^2_{\cH_\mu}<\infty.
\end{align*}
Hence, $e^{-\cdot t}$ is a bounded linear operator from $\cH_\mu$ to $\cV_\mu$, and $(\cK y)(t)=\mu_0[e^{-\cdot t}y]=\int_\supp e^{-\theta t}y(\theta)\dmu$ for any $t>0$ and any $y\in\cH_\mu$. Furthermore, for any $T>0$,
\begin{align*}
	\int^T_0\|e^{-\cdot t}y\|^2_{\cV_\mu}\,\diff t&=\int^T_0\int_\supp(\theta+1)r(\theta)e^{-2\theta t}|y(\theta)|^2\dmu\,\diff t\\
	&\leq\int_\supp\Big(\frac{1}{2\theta}\wedge T\Big)(\theta+1)r(\theta)|y(\theta)|^2\dmu\\
	&\leq\Big(\frac{1}{2}+T\Big)\int_\supp r(\theta)|y(\theta)|^2\dmu\\
	&=\Big(\frac{1}{2}+T\Big)\|y\|^2_{\cH_\mu}<\infty.
\end{align*}
Thus, by \cref{lift_lemm_space} (iii), we see that the map $\cK:y\mapsto(\cK y)(\cdot)$ is a bounded linear operator from $\cH_\mu$ to $L^2(0,T;\bR^n)$ for any $T>0$.

The next lemma shows that $\bR^n$-valued ``Volterra processes'' are represented by the corresponding (stochastic) convolutions on $\cH_\mu$ via the map $\mu[\cdot]$.

%% Lemma

\begin{lemm}\label{lift_lemm_convolution}
Let \cref{lift_assum_kernel} hold. Let $T>0$ be fixed.
\begin{itemize}
\item[(i)]
Let $b:\Omega\times[0,T]\to\bR^n$ be a predictable process such that $\int^T_0|b_t|\,\diff t<\infty$ a.s. For each $t\in[0,T]$, consider the convolution
\begin{equation*}
	I^1_t:=\int^t_0e^{-\cdot(t-s)}b_s\,\diff s,
\end{equation*}
where the integral is defined as a Bochner integral on $\cH_\mu$. Then $I^1=(I^1_t)_{t\in[0,T]}$ is an $\cH_\mu$-valued continuous adapted process and satisfies
\begin{equation*}
	\sup_{t\in[0,T]}\|I^1_t\|_{\cH_\mu}\leq\Big(\int_\supp r(\theta)\dmu\Big)^{1/2}\int^T_0|b_t|\,\diff t<\infty\ \text{a.s.}
\end{equation*}
and
\begin{equation*}
	\int^T_0\|I^1_t\|^2_{\cV_\mu}\,\diff t\leq\Big(\frac{1}{2}+T\Big)\int_\supp r(\theta)\dmu\,\Big(\int^T_0|b_t|\,\diff t\Big)^2<\infty\ \text{a.s.}
\end{equation*}
Furthermore, it holds that
\begin{equation*}
	I^1_t\in\cV_\mu\ \ \text{and}\ \ \mu[I^1_t]=\int^t_0K(t-s)b_s\,\diff s\ \ \text{a.s.\ for a.e.\ $t\in[0,T]$},
\end{equation*}
and
\begin{equation*}
	\int^T_0\big|\mu[I^1_t]\big|^2\,\diff t\leq\int^T_0K(t)^2\,\diff t\,\Big(\int^T_0|b_t|\,\diff t\Big)^2<\infty\ \text{a.s.}
\end{equation*}
\item[(ii)]
Let $\sigma:\Omega\times[0,T]\to\bR^{n\times d}$ be a predictable process such that $\int^T_0|\sigma_t|^2\,\diff t<\infty$ a.s. For each $t\in[0,T]$, consider the stochastic convolution
\begin{equation*}
	I^2_t:=\int^t_0e^{-\cdot(t-s)}\sigma_s\,\diff W_s,
\end{equation*}
where the integral is defined as a stochastic integral on the Hilbert space $\cH_\mu$. Then $I^2=(I^2_t)_{t\in[0,T]}$ is an $\cH_\mu$-valued adapted process, has an $\cH_\mu$-continuous version (again denoted by $I^2$), and satisfies
\begin{equation*}
	\int^T_0\|I^2_t\|^2_{\cV_\mu}\,\diff t<\infty\ \text{a.s.}
\end{equation*}
Furthermore, it holds that
\begin{equation*}
	I^2_t\in\cV_\mu\ \ \text{and}\ \ \mu[I^2_t]=\int^t_0K(t-s)\sigma_s\,\diff  W_s\ \ \text{a.s.\ for a.e.\ $t\in[0,T]$.}
\end{equation*}
If furthermore $\bE\big[\int^T_0|\sigma_t|^2\,\diff t\big]<\infty$, we have
\begin{align*}
	&\bE\Big[\sup_{t\in[0,T]}\|I^2_t\|^2_{\cH_\mu}\Big]\leq38\int_\supp r(\theta)\dmu\,\bE\Big[\int^T_0|\sigma_t|^2\,\diff t\Big]<\infty,\\
	&\bE\Big[\int^T_0\|I^2_t\|^2_{\cV_\mu}\,\diff t\Big]\leq\Big(\frac{1}{2}+T\Big)\int_\supp r(\theta)\dmu\,\bE\Big[\int^T_0|\sigma_t|^2\,\diff t\Big]<\infty,\ \text{and}\\
	&\bE\Big[\int^T_0\big|\mu[I^2_t]\big|^2\,\diff t\Big]\leq\int^T_0K(t)^2\,\diff t\,\bE\Big[\int^T_0|\sigma_t|^2\,\diff t\Big]<\infty.
\end{align*}
\end{itemize}
\end{lemm}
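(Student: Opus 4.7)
The proof of both parts proceeds by reducing everything to explicit componentwise computations on $\cH_\mu$, exploiting that $\{e^{-\cdot t}\}_{t\ge 0}$ is a $C_0$-semigroup of contractions with diagonal multiplicative form (\cref{lift_lemm_space}(ii)) and that constant vectors/matrices lie in $\cH_\mu$ (resp. $L_2(\bR^d;\cH_\mu)$) with the stated norms (\cref{lift_lemm_space}(i)). The identifications $\mu[\cdot]=\mu_0[\cdot]$ on $L^1(\mu)$ and $\cV_\mu\hookrightarrow L^1(\mu)$ (\cref{lift_lemm_space}(iii)) will then let us pass between the lifted and scalar representations via (stochastic) Fubini.

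For (i), the bound $\|e^{-\cdot(t-s)}b_s\|_{\cH_\mu}\leq(\int_\supp r\dmu)^{1/2}|b_s|$ makes the Bochner integral on $\cH_\mu$ well-defined and yields the sup bound by the triangle inequality; adaptedness is inherited from $b$, and $\cH_\mu$-continuity follows by splitting $I^1_{t+h}-I^1_t$ into an integration increment and a semigroup increment and using strong continuity of $\{e^{-\cdot t}\}$. For the integrated $\cV_\mu$ bound, I would work pointwise in $\theta$: by Cauchy--Schwarz applied to the factorization $|b_s|=|b_s|^{1/2}\cdot|b_s|^{1/2}$,
\[
	|I^1_t(\theta)|^2\leq\Big(\int^T_0|b_s|\,\diff s\Big)\int^t_0 e^{-2\theta(t-s)}|b_s|\,\diff s.
\]
Integrating against $(\theta+1)r(\theta)\dmu$ and then over $t\in[0,T]$ and swapping orders by Fubini reduces matters to the elementary inequality $(\theta+1)(\tfrac{1}{2\theta}\wedge T)\leq\tfrac{1}{2}+T$ already used just before the lemma. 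Finiteness of $\int^T_0\|I^1_t\|^2_{\cV_\mu}\,\diff t$ then forces $I^1_t\in\cV_\mu\subset L^1(\mu)$ a.s.\ for a.e.\ $t$, and on this full-measure set classical Fubini identifies $\mu[I^1_t]=\int^t_0 K(t-s)b_s\,\diff s$. The final scalar $L^2$ bound on $\mu[I^1_t]$ is another Cauchy--Schwarz estimate on this convolution.

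For (ii), the estimate $\|e^{-\cdot(t-s)}\sigma_s\|_{L_2(\bR^d;\cH_\mu)}\leq(\int_\supp r\dmu)^{1/2}|\sigma_s|$ makes $I^2_t$ a well-defined $\cH_\mu$-valued It\^o integral. The $\cV_\mu$-integral estimate is the same computation as in (i) with Cauchy--Schwarz replaced by It\^o's isometry, $\bE[|I^2_t(\theta)|^2]=\bE[\int^t_0 e^{-2\theta(t-s)}|\sigma_s|^2\,\diff s]$; again Fubini in $(\theta,s,t)$ reduces to $(\theta+1)(\tfrac{1}{2\theta}\wedge T)\leq\tfrac{1}{2}+T$. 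Hence $I^2_t\in\cV_\mu\subset L^1(\mu)$ a.s.\ for a.e.\ $t$, and the stochastic Fubini theorem (applicable since the integrand $(\theta,s)\mapsto e^{-\theta(t-s)}\sigma_s$ has finite $(\mu\otimes \bP\otimes\diff s)$-$L^2$ norm by the same isometry) yields $\mu[I^2_t]=\int^t_0 K(t-s)\sigma_s\,\diff W_s$, from which the final $L^2$ bound on $\mu[I^2_t]$ follows by one more application of It\^o isometry and Fubini.

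The genuinely delicate point is producing an $\cH_\mu$-continuous modification of $I^2$ together with the maximal inequality $\bE[\sup_{t\in[0,T]}\|I^2_t\|^2_{\cH_\mu}]\leq 38(\int_\supp r\dmu)\bE[\int^T_0|\sigma_t|^2\,\diff t]$. Since $\{e^{-\cdot t}\}_{t\geq 0}$ is a $C_0$-semigroup of contractions on the Hilbert space $\cH_\mu$, this is a standard maximal inequality for Hilbert-valued stochastic convolutions: one route is to dilate via Sz.-Nagy's theorem to a unitary group on a larger Hilbert space, where the stochastic convolution becomes a martingale and Doob's $L^2$ inequality gives a \BDG-type bound with an explicit constant; alternatively, exploiting that $e^{-\cdot t}$ acts diagonally, one can treat $I^2_t(\theta)$ componentwise as a standard scalar Ornstein--Uhlenbeck integral, derive the maximal bound \BDG-style in $\theta$, and integrate against $r(\theta)\dmu$ at the end. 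Continuity of the resulting modification is then routine given these $L^2$ estimates.
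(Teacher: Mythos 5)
Your part (i) and the $L^2$ portion of part (ii) follow essentially the paper's route: the hand-rolled Cauchy--Schwarz factorization $|b_s|=|b_s|^{1/2}|b_s|^{1/2}$ is just a proof of Young's convolution inequality, which is what the paper invokes, and the elementary bound $(\theta+1)\int^T_0e^{-2\theta t}\,\diff t\leq\frac{1}{2}+T$ is the same in both. For the maximal inequality with constant $38$, the dilation argument you sketch is precisely the content of the result the paper cites (Da Prato--Zabczyk, Theorem 6.10, for contraction semigroups); your componentwise alternative via $I^2_t(\theta)=M_t-\theta\int^t_0e^{-\theta(t-s)}M_s\,\diff s$ with Doob's inequality would also work, though you would still owe an argument for $\cH_\mu$-continuity (dominated convergence against the constant $2\sup_t|M_t|\in\cH_\mu$) and for joint measurability in $(\omega,t,\theta)$, which the paper handles separately via Protter's measurable-version theorem.

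The genuine gap is in part (ii). The lemma asserts the existence of an $\cH_\mu$-continuous version, the a.s.\ finiteness of $\int^T_0\|I^2_t\|^2_{\cV_\mu}\,\diff t$, and the identification $\mu[I^2_t]=\int^t_0K(t-s)\sigma_s\,\diff W_s$ under the hypothesis $\int^T_0|\sigma_t|^2\,\diff t<\infty$ a.s.\ \emph{only}; the expectation bounds are the sole conclusions reserved for the case $\bE\big[\int^T_0|\sigma_t|^2\,\diff t\big]<\infty$. Every step of your argument for the first three conclusions --- the It\^{o} isometry $\bE[|I^2_t(\theta)|^2]=\bE[\int^t_0e^{-2\theta(t-s)}|\sigma_s|^2\,\diff s]$, the $L^2$-moment criterion you invoke to justify stochastic Fubini, and the maximal inequality --- requires square-integrability in expectation, so as written you only prove them under the stronger hypothesis. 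You need a localization step: set $\tau_k:=\inf\{t\,|\,\int^t_0|\sigma_s|^2\,\diff s>k\}$ and $\sigma^k:=\sigma\1_{[0,\tau_k)}$, apply the $L^2$ case to each $\sigma^k$, check that the resulting convolutions $I^{2,k}$ agree on $[0,\tau_k]$, and pass to the limit $k\to\infty$ to recover the continuous version, the a.s.\ $\cV_\mu$-bound, and the identity $\mu[I^2_t]=\int^t_0K(t-s)\sigma_s\,\diff W_s$ in the stated generality. This is exactly how the paper completes the proof.
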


%% Remark

\begin{rem}\label{lift_rem_measurable-convolution}
Under the same assumptions and notations as above, for any $\theta\in\supp$ and $t\in[0,T]$, we can define the integrals $\int^t_0e^{-\theta(t-s)}b_s\,\diff s$ and $\int^t_0e^{-\theta(t-s)}\sigma_s\,\diff W_s$ as Lebesgue/stochastic integrals on $\bR^n$. Furthermore, by \cite[Theorem 63 and its corollary]{Pr05}, there exist two maps $\tilde{I}^1,\tilde{I}^2:\Omega\times[0,T]\times\supp\to\bR^n$ such that
\begin{itemize}
\item
$\tilde{I}^1$ and $\tilde{I}^2$ are (jointly) measurable with respect to the product $\sigma$-algebra generated by the predictable $\sigma$-algebra on $\Omega\times[0,T]$ and the Borel $\sigma$-algebra on $\supp$;
\item
for any $\theta\in\supp$, the $\bR^n$-valued predictable processes $(\tilde{I}^1(\omega,t,\theta))_{t\in[0,T]}$ and $(\tilde{I}^2(\omega,t,\theta))_{t\in[0,T]}$ have continuous paths a.s.;
\item
for any $\theta\in\supp$ and any $t\in[0,T]$, $\tilde{I}^1(\omega,t,\theta)=\int^t_0e^{-\theta(t-s)}b_s\,\diff s$ and $\tilde{I}^2(\omega,t,\theta)=\int^t_0e^{-\theta(t-s)}\sigma_s\,\diff W_s$ a.s.
\end{itemize}
Furthermore, we have
\begin{equation*}
	I^1_t=\tilde{I}^1(\omega,t,\cdot)\ \text{and}\ I^2_t=\tilde{I}^2(\omega,t,\cdot)\ \text{in $\cH_\mu$ a.s. for any $t\in[0,T]$.}
\end{equation*}
Indeed, by the commutativity of bounded linear operators on $\cH_\mu$ and the Bochner/stochastic integrals on $\cH_\mu$, together with the (stochastic) Fubini theorem (cf.\ \cite{Ve12}), we have, for any $y\in\cH_\mu$,
\begin{align*}
	\langle y,I^1_t\rangle_{\cH_\mu}&=\int^t_0\langle y,e^{-\cdot(t-s)}b_s\rangle_{\cH_\mu}\,\diff s\\
	&=\int^t_0\int_\supp r(\theta)e^{-\theta(t-s)}\langle y(\theta),b_s\rangle\dmu\,\diff s\\
	&=\int_\supp r(\theta)\Big\langle y(\theta),\int^t_0e^{-\theta(t-s)}b_s\,\diff s\Big\rangle\dmu\\
	&=\langle y,\tilde{I}^1(\omega,t,\cdot)\rangle_{\cH_\mu}
\end{align*}
and
\begin{align*}
	\langle y,I^2_t\rangle_{\cH_\mu}&=\int^t_0\langle y,e^{-\cdot(t-s)}\sigma_s\,\diff W_s\rangle_{\cH_\mu}\\
	&=\sum^d_{j=1}\int^t_0\int_\supp r(\theta)e^{-\theta(t-s)}\langle y(\theta),\sigma^j_s\rangle\dmu\,\diff W^j_s\\
	&=\int_\supp r(\theta)\Big\langle y(\theta),\int^t_0e^{-\theta(t-s)}\sigma_s\,\diff W_s\Big\rangle\dmu\\
	&=\langle y,\tilde{I}^2(\omega,t,\cdot)\rangle_{\cH_\mu}
\end{align*}
a.s.\ for any $t\in[0,T]$. Since $\cH_\mu$ is a separable Hilbert space, we get the desired claims.
\end{rem}

%% Proof

\begin{proof}[Proof of \cref{lift_lemm_convolution}]
We show (i). The adaptedness of $I^1$ is trivial. By the strong continuity and contractivity of $\{e^{-\cdot t}\}_{t\geq0}$, it is easy to see that $I^1$ is $\cH_\mu$-continuous a.s. and satisfies
\begin{equation*}
	\sup_{t\in[0,T]}\|I^1_t\|_{\cH_\mu}\leq\int^T_0\|b_t\|_{\cH_\mu}\,\diff t=\Big(\int_\supp r(\theta)\dmu\Big)^{1/2}\,\int^T_0|b_t|\,\diff t<\infty\ \text{a.s.}
\end{equation*}
Furthermore,
\begin{align*}
	\int^T_0\|I^1_t\|^2_{\cV_\mu}\,\diff t&=\int^T_0\int_\supp(\theta+1)r(\theta)|I^1_t(\theta)|^2\dmu\,\diff t\\
	&\leq\int^T_0\int_\supp(\theta+1)r(\theta)\Big(\int^t_0e^{-\theta(t-s)}|b_s|\,\diff s\Big)^2\dmu\,\diff t\\
	&=\int_\supp(\theta+1)r(\theta)\int^T_0\Big(\int^t_0e^{-\theta(t-s)}|b_s|\,\diff s\Big)^2\,\diff t\dmu\\
	&\leq\int_\supp(\theta+1)r(\theta)\int^T_0e^{-2\theta t}\,\diff t\,\Big(\int^T_0|b_t|\,\diff t\Big)^2\dmu\\
	&\leq\Big(\frac{1}{2}+T\Big)\int_\supp r(\theta)\dmu\,\Big(\int^T_0|b_t|\,\diff t\Big)^2<\infty\ \text{a.s.},
\end{align*}
where we used Young's convolution inequality in the forth line. In particular, we have $I^1_t\in\cV_\mu$ for a.e.\ $t\in[0,T]$ a.s. By Tonelli's theorem, we have $I^1_t\in\cV_\mu$ a.s.\ for a.e.\ $t\in[0,T]$, and hence $\mu[I^1_t]=\int_\supp I^1_t(\theta)\dmu$ a.s.\ for a.e.\ $t\in[0,T]$. Furthermore, the Cauchy--Schwarz inequality and the above estimates yield that
\begin{align*}
	&\int^T_0\Big(\int_\supp \int^t_0e^{-\theta(t-s)}|b_s|\,\diff s\dmu\Big)^2\,\diff t\\
	&\leq\int_\supp(\theta+1)^{-1}r(\theta)^{-1}\dmu\int^T_0\int_\supp(\theta+1)r(\theta)\Big(\int^t_0e^{-\theta(t-s)}|b_s|\,\diff s\Big)^2\dmu\,\diff t<\infty\ \text{a.s.},
\end{align*}
and hence $\int_\supp\int^t_0e^{-\theta(t-s)}|b_s|\,\diff s\dmu<\infty$ a.s.\ for a.e.\ $t\in[0,T]$.
Thus, by Fubini's theorem, we have
\begin{equation*}
	\mu[I^1_t]=\int_\supp I^1_t(\theta)\dmu=\int^t_0\int_\supp e^{-\theta(t-s)}\dmu\,b_s\,\diff s=\int^t_0K(t-s)b_s\,\diff s
\end{equation*}
a.s.\ for a.e.\ $t\in[0,T]$. From this representation, by Young's convolution inequality, we obtain
\begin{equation*}
	\int^T_0\big|\mu[I^1_t]\big|^2\,\diff t=\int^T_0\Big|\int^t_0K(t-s)b_s\,\diff s\Big|^2\,\diff t\leq\int^T_0K(t)^2\,\diff t\,\Big(\int^T_0|b_t|\,\diff t\Big)^2\ \text{a.s.}
\end{equation*}
Since $\int^T_0K(t)^2\,\diff t<\infty$, which follows from the assumption $r(\theta)\geq1\wedge(\theta^{-1/2})$, the last assertion of (i) holds.

We show (ii). First, we assume that $\bE\big[\int^T_0|\sigma_t|^2\,\diff t\big]<\infty$. Then $I^2$ is well-defined as an $\cH_\mu$-valued adapted process. Noting that $\{e^{-\cdot t}\}_{t\geq0}$ is a contraction semigroup on $\cH_\mu$, by \cite[Theorem 6.10 and its proof]{DaPrZa14}\footnote{In \cite[the last line of page 167]{DaPrZa14}, we can replace the number $12$ by $3$, and then we can take $\ep=\frac{1}{6}$. As a consequence, we get $\bE\sup_{s\in[0,t]}|Z(s)|^2\leq38\bE\int^t_0\|\Phi(s)\|^2_{L^0_2}\,\diff s$ in their notations.}, $I^2$ has an $\cH_\mu$-continuous version (again denoted by $I^2$) and satisfies
\begin{equation*}
	\bE\Big[\sup_{t\in[0,T]}\|I^2_t\|^2_{\cH_\mu}\Big]\leq38\bE\Big[\int^T_0\|\sigma_t\|^2_{L_2(\bR^d;\cH_\mu)}\,\diff t\Big]=38\int_\supp r(\theta)\dmu\,\bE\Big[\int^T_0|\sigma_t|^2\,\diff t\Big]<\infty.
\end{equation*}
Furthermore,
\begin{align*}
	\bE\Big[\int^T_0\|I^2_t\|^2_{\cV_\mu}\,\diff t\Big]&=\bE\Big[\int^T_0\int_\supp(\theta+1)r(\theta)|I^2_t(\theta)|^2\dmu\,\diff t\Big]\\
	&=\bE\Big[\int^T_0\int_\supp(\theta+1)r(\theta)\int^t_0e^{-2\theta(t-s)}|\sigma_s|^2\,\diff s\dmu\,\diff t\Big]\\
	&\leq\bE\Big[\int_\supp(\theta+1)r(\theta)\int^T_0e^{-2\theta t}\,\diff t\,\int^T_0|\sigma_t|^2\,\diff t\dmu\Big]\\
	&\leq\Big(\frac{1}{2}+T\Big)\int_\supp r(\theta)\dmu\,\bE\Big[\int^T_0|\sigma_t|^2\,\diff t\Big]<\infty.
\end{align*}
In particular, $I^2_t\in\cV_\mu$ a.s.\ for a.e.\ $t\in[0,T]$, and hence $\mu[I^2_t]=\int_\supp I^2_t(\theta)\dmu$ a.s.\ for a.e.\ $t\in[0,T]$. Furthermore, noting that $1\leq(\theta+1)r(\theta)$, the above estimates yield that
\begin{align*}
	&\bE\Big[\int^T_0\int_\supp\int^t_0e^{-2\theta(t-s)}|\sigma_s|^2\,\diff s\dmu\,\diff t\Big]\\
	&\leq\bE\Big[\int^T_0\int_\supp(\theta+1)r(\theta)\int^t_0e^{-2\theta(t-s)}|\sigma_s|^2\,\diff s\dmu\,\diff t\Big]<\infty,
\end{align*}
and hence $\bE[\int_\supp\int^t_0e^{-2\theta(t-s)}|\sigma_s|^2\,\diff s\dmu]<\infty$ for a.e.\ $t\in[0,T]$. Thus, by the stochastic Fubini theorem (cf.\ \cite{Ve12}), we have
\begin{equation*}
	\mu[I^2_t]=\int_\supp I^2_t(\theta)\dmu=\int^t_0\int_\supp e^{-\theta(t-s)}\dmu\,\sigma_s\,\diff W_s=\int^t_0K(t-s)\sigma_s\,\diff W_s
\end{equation*}
a.s.\ for a.e.\ $t\in[0,T]$. From this representation, we obtain
\begin{align*}
	\bE\Big[\int^T_0\big|\mu[I^2_t]\big|^2\,\diff t\Big]&=\bE\Big[\int^T_0\Big|\int^t_0K(t-s)\sigma_s\,\diff W_s\Big|^2\,\diff t\Big]\\
	&=\bE\Big[\int^T_0\int^t_0K(t-s)^2|\sigma_s|^2\,\diff s\,\diff t\Big]\\
	&\leq\int^T_0K(t)^2\,\diff t\,\bE\Big[\int^T_0|\sigma_t|^2\,\diff t\Big]<\infty.
\end{align*}
Now we consider the general case. We can define the $\cH_\mu$-valued stochastic integral $I^2_t=\int^t_0e^{-\cdot(t-s)}\sigma_s\,\diff W_s$ for each $t\in[0,T]$. Define $\tau_k:=\inf\{t\in[0,T]\,|\,\int^t_0|\sigma_s|^2\,\diff s>k\}$ and $\sigma^k_s:=\sigma_s\1_{[0,\tau_k)}(s)$ for each $k\in\bN$. Then $\{\tau_k\}_{k\in\bN}$ is a sequence of stopping times and satisfies $\tau_k\leq\tau_{k+1}$ for any $k\in\bN$ and $\lim_{k\to\infty}\tau_k=T$ a.s. By the above observations, for each $k\in\bN$, the stochastic convolution $I^{2,k}_t:=\int^t_0e^{-\cdot(t-s)}\sigma^k_s\,\diff W_s$ is an $\cH_\mu$-valued adapted process and has an $\cH_\mu$-continuous version (againe denoted by $I^{2,k}$). It is easy to see that $I^{2,k}_t=I^{2,\ell}_t$ (in $\cH_\mu$) for any $t\in[0,\tau_k]$ and any $1\leq k\leq\ell$ a.s. Thus, the limit $I^{2,\infty}_t:=\lim_{k\to\infty}I^{2,k}_t$ (in $\cH_\mu$) exists for any $t\in[0,T]$ a.s., and $I^{2,\infty}=(I^{2,\infty}_t)_{t\in[0,T]}$ is $\cH_\mu$-continuous a.s. Furthermore, $I^2_t\1_{\{t\leq\tau_k\}}=I^{2,k}_t\1_{\{t\leq\tau_k\}}=I^{2,\infty}_t\1_{\{t\leq\tau_k\}}$ a.s.\ for any $t\in[0,T]$ and any $k\in\bN$, hence we have $I^2_t=I^{2,\infty}_t$ a.s.\ for any $t\in[0,T]$. Thus, $I^{2,\infty}$ is an $\cH_\mu$-continuous version of $I^2$. In the following, we take this version and write $I^2=I^{2,\infty}$. Since $I^2_t=I^{2,k}_t$ for any $t\in[0,\tau_k]$ and $\int^T_0\|I^{2,k}_t\|^2_{\cV_\mu}\,\diff t<\infty$ a.s.\ for any $k\in\bN$, it can be easily shown that $\int^T_0\|I^2_t\|^2_{\cV_\mu}\,\diff t<\infty$ a.s. In particular, $I^2_t\in\cV_\mu$ for a.e.\ $t\in[0,T]$ a.s. By Tonelli's theorem, we have $I^2_t\in\cV_\mu$ a.s.\ for a.e.\ $t\in[0,T]$. Furthermore, we have
\begin{equation*}
	\mu[I^2_t]\1_{\{t\leq\tau_k\}}=\mu[I^{2,k}_t]\1_{\{t\leq\tau_k\}}=\int^t_0K(t-s)\sigma^k_s\,\diff W_s\1_{\{t\leq\tau_k\}}=\int^t_0K(t-s)\sigma_s\,\diff W_s\1_{\{t\leq\tau_k\}}
\end{equation*}
for any $k\in\bN$ a.s.\ for a.e.\ $t\in[0,T]$. This implies that $\mu[I^2_t]=\int^t_0K(t-s)\sigma_s\,\diff W_s$ a.s.\ for a.e.\ $t\in[0,T]$. This completes the proof.
\end{proof}

In the following, we will always assume that the process $(\int^t_0e^{-\cdot(t-s)}\sigma_s\,\diff W_s)_{t\in[0,T]}$ is $\cH_\mu$-continuous for each predictable process $\sigma:\Omega\times[0,T]\to\bR^{n\times d}$ such that $\int^T_0|\sigma_t|^2\,\diff t<\infty$ a.s.

%%%%%%%%%%%%%%
%% Subsection
%%%%%%%%%%%%%%

\subsection{Equivalence between the SVIE \eqref{lift_eq_SVIE} and the lifted SEE \eqref{lift_eq_SEE}}\label{lift-equivalence}

We define the solutions of the SVIE \eqref{lift_eq_SVIE} and the lifted SEE \eqref{lift_eq_SEE}.

%% Definition

\begin{defi}\label{lift_defi_solution}
Suppose that \cref{lift_assum_kernel} holds. Let $b:\bR^n\to\bR^n$ and $\sigma:\bR^n\to\bR^{n\times d}$ be measurable maps.
\begin{itemize}
\item
Let $x:\Omega\times(0,\infty)\to\bR^n$ be $\cF_0\otimes\cB((0,\infty))$-measurable with $\int^T_0|x(t)|^2\,\diff t<\infty$ a.s.\ for any $T>0$. We say that an $\bR^n$-valued process $X$ is a \emph{solution} of the SVIE \eqref{lift_eq_SVIE} if $X$ is predictable, the integrability conditions
\begin{equation}\label{lift_eq_X-integrability}
	\int^T_0|X_t|^2\,\diff t<\infty,\ \int^T_0|b(X_t)|\,\diff t<\infty\ \text{and}\ \int^T_0|\sigma(X_t)|^2\,\diff t<\infty
\end{equation}
hold a.s.\ for any $T>0$, and the equality in \eqref{lift_eq_SVIE} holds a.s.\ for a.e.\ $t>0$. We say that the (pathwise) uniqueness holds for the SVIE \eqref{lift_eq_SVIE} if for any two solutions $X^1$and $X^2$ of the SVIE \eqref{lift_eq_SVIE}, it holds that $X^1_t=X^2_t$ a.s.\ for a.e.\ $t>0$.
\item
Let $y:\Omega\to\cH_\mu$ be $\cF_0$-measurable. We say that an $\cH_\mu$-valued process $Y$ is a \emph{mild solution} of the lifted SEE \eqref{lift_eq_SEE} if $Y$ is $\cH_\mu$-continuous and adapted, the integrability conditions
\begin{equation}\label{lift_eq_Y-integrability}
	\int^T_0\|Y_t\|^2_{\cV_\mu}\,\diff t<\infty,\ \int^T_0|b(\mu[Y_t])|\,\diff t<\infty\ \text{and}\ \int^T_0|\sigma(\mu[Y_t])|^2\,\diff t<\infty
\end{equation}
hold a.s.\ for any $T>0$, and the equality
\begin{equation}\label{lift_eq_mild-sol}
	Y_t(\theta)=e^{-\theta t}y(\theta)+\int^t_0e^{-\theta(t-s)}b(\mu[Y_s])\,\diff s+\int^t_0e^{-\theta(t-s)}\sigma(\mu[Y_s])\,\diff W_s
\end{equation}
holds for $\mu$-a.e.\ $\theta\in\supp$ a.s., for any $t\geq0$. We say that the (pathwise) uniqueness holds for the lifted SEE \eqref{lift_eq_SEE} if for any two mild solutions $Y^1$ and $Y^2$ of the lifted SEE \eqref{lift_eq_SEE}, it holds that $Y^1_t(\theta)=Y^2_t(\theta)$ for $\mu$-a.e.\ $\theta\in\supp$ a.s., for any $t\geq0$.
\end{itemize}
\end{defi}

%% Remark

\begin{rem}\label{lift_rem_solution}
\begin{itemize}
\item[(i)]
We would like to emphasize that the solution $X$ of the SVIE \eqref{lift_eq_SVIE} is defined only almost everywhere with respect to the time parameter $t$, and no path-regularity condition is imposed. In order to ensure the path-regularity of $X$, we need further regularity assumptions on the kernel $K$; see for example \cite{AbiJaCuLaPu21,AbiJaMiPh21}. On the other hand, the mild solution $Y$ of the SEE \eqref{lift_eq_SEE} is defined as an $\cH_\mu$-continuous process.
\item[(ii)]
Since the map $\mu:\cH_\mu\to\bR^n$ is Borel measurable, for any $\cH_\mu$-valued continuous adapted process $Y$, the process $\mu[Y]$ is an $\bR^n$-valued predictable process. If $\int^T_0\|Y_t\|^2_{\cV_\mu}\,\diff t<\infty$ a.s. for some $T>0$, then $Y_t\in\cV_\mu$ and $\mu[Y_t]=\int_\supp Y_t(\theta)\dmu$ a.s.\ for a.e.\ $t\in[0,T]$.
\item[(iii)]
The integrals in the right-hand side of \eqref{lift_eq_mild-sol} can be defined as integrals on $\cH_\mu$ as well as integrals on $\bR^n$ parametrized by $\theta$, and these two notions coincide on $\cH_\mu$, i.e., in the sense of $\mu$-a.e.\ $\theta\in\supp$; see \cref{lift_rem_measurable-convolution}. In particular, for a mild solution $Y$ of the lifted SEE \eqref{lift_eq_SEE}, there exists a jointly measurable map $\Omega\times[0,\infty)\times\supp\ni(\omega,t,\theta)\mapsto\tilde{Y}_t(\theta)(\omega)\in\bR^n$ (with respect to the predictable $\sigma$-algebra on $\Omega\times[0,\infty)$ and the Borel $\sigma$-algebra on $\supp$) such that $Y_t=\tilde{Y}_t$ in $\cH_\mu$ a.s.\ for any $t\geq0$ and that, for each $\theta\in\supp$, the process $\tilde{Y}(\theta)=(\tilde{Y}_t(\theta))_{t\geq0}$ is an $\bR^n$-valued It\^{o} process satisfying
\begin{equation*}
	\begin{dcases}
	\diff \tilde{Y}_t(\theta)=-\theta \tilde{Y}_t(\theta)\,\diff t+b(\mu[\tilde{Y}_t])\,\diff t+\sigma(\mu[\tilde{Y}_t])\,\diff W_t,\ t>0,\\
	\tilde{Y}_0(\theta)=y(\theta),
	\end{dcases}
\end{equation*}
in the usual It\^{o}'s sense.
\end{itemize}
\end{rem}

Now we state our first main result which shows the equivalence of the SVIE \eqref{lift_eq_SVIE} and the lifted SEE \eqref{lift_eq_SEE}. Recall the definitions of the maps $\mu[\cdot]$ and $\cK$ introduced in \eqref{lift_eq_mu-map} and \eqref{lift_eq_K-map}, respectively.

%% Theorem

\begin{theo}\label{lift_theo_equivalence}
Suppose that \cref{lift_assum_kernel} holds. Let $b:\bR^n\to\bR^n$ and $\sigma:\bR^n\to\bR^{n\times d}$ be measurable, and let $y:\Omega\to\cH_\mu$ be an $\cF_0$-measurable random variable. Then the following assertions hold:
\begin{itemize}
\item
If $X$ is a solution of the SVIE \eqref{lift_eq_SVIE} with the forcing term $x=\cK y$, then the $\cH_\mu$-valued process $Y$ defined by
\begin{equation*}
	Y_t:=e^{-\cdot t}y(\cdot)+\int^t_0e^{-\cdot(t-s)}b(X_s)\,\diff s+\int^t_0e^{-\cdot(t-s)}\sigma(X_s)\,\diff W_s,\ t\geq0,
\end{equation*}
is a mild solution of the lifted SEE \eqref{lift_eq_SEE} with the initial condition $y$. Furthermore, it holds that
\begin{equation*}
	X_t=\mu[Y_t]
\end{equation*}
a.s.\ for a.e.\ $t>0$.
\item
If $Y$ is a mild solution of the lifted SEE \eqref{lift_eq_SEE} with the initial condition $y$, then the $\bR^n$-valued process $X$ defined by
\begin{equation*}
	X_t:=\mu[Y_t],\ t>0,
\end{equation*}
is a solution of the SVIE \eqref{lift_eq_SVIE} with the forcing term $x=\cK y$. Furthermore, it holds that
\begin{equation*}
	Y_t=e^{-\cdot t}y(\cdot)+\int^t_0e^{-\cdot(t-s)}b(X_s)\,\diff s+\int^t_0e^{-\cdot(t-s)}\sigma(X_s)\,\diff W_s
\end{equation*}
a.s.\ for any $t\geq0$.
\end{itemize}
In particular, the uniqueness holds for the SVIE \eqref{lift_eq_SVIE} with the forcing term $x=\cK y$ if and only if the uniqueness holds for the lifted SEE \eqref{lift_eq_SEE} with the initial condition $y$.
\end{theo}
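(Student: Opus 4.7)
The overall strategy is to verify both directions by direct computation, leveraging \cref{lift_lemm_convolution} (which handles the deterministic and stochastic convolutions at the level of $\cH_\mu$) and the observation from \cref{lift_lemm_space}(iii) together with \cref{lift_rem_solution}(ii) that $\mu[\cdot]$ acts as the genuine $\mu$-integral on $\cV_\mu$, so that it splits additively over sums of elements of $\cV_\mu$ even though it is not linear on all of $\cH_\mu$.

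For the first direction, given a solution $X$ of \eqref{lift_eq_SVIE} with forcing $x=\cK y$, I would define the three processes
\begin{equation*}
Y^0_t := e^{-\cdot t}y,\quad Y^1_t := \int_0^t e^{-\cdot(t-s)}b(X_s)\,\diff s,\quad Y^2_t := \int_0^t e^{-\cdot(t-s)}\sigma(X_s)\,\diff W_s,
\end{equation*}
so $Y_t = Y^0_t + Y^1_t + Y^2_t$. Continuity of $Y^0$ follows from the strong continuity of the semigroup $\{e^{-\cdot t}\}_{t\geq 0}$ on $\cH_\mu$; continuity and adaptedness of $Y^1$ and $Y^2$ together with $\int_0^T\|Y^i_t\|_{\cV_\mu}^2\,\diff t<\infty$ a.s.\ are exactly the content of \cref{lift_lemm_convolution}. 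The elementary bound $\int_0^T\|e^{-\cdot t}y\|_{\cV_\mu}^2\,\diff t\leq(\tfrac12+T)\|y\|_{\cH_\mu}^2$ already derived above the lemma then gives $\int_0^T\|Y_t\|_{\cV_\mu}^2\,\diff t<\infty$ a.s.\ and in particular $Y_t\in\cV_\mu\subset\cH^1_\mu$ for a.e.\ $t$ a.s. On that set of $t$, additivity of $\mu[\cdot]$ on $\cV_\mu$ yields
\begin{equation*}
\mu[Y_t]=\mu[Y^0_t]+\mu[Y^1_t]+\mu[Y^2_t]=(\cK y)(t)+\int_0^t K(t-s)b(X_s)\,\diff s+\int_0^t K(t-s)\sigma(X_s)\,\diff W_s=X_t,
\end{equation*}
where the middle equality uses the explicit formulas $\mu[Y^i_t]$ in \cref{lift_lemm_convolution}. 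Substituting $X_s=\mu[Y_s]$ back into the definitions of $Y^1,Y^2$ and appealing to \cref{lift_rem_measurable-convolution} to identify the $\cH_\mu$-valued integrals with their $\theta$-pointwise versions shows that \eqref{lift_eq_mild-sol} holds $\mu$-a.e.\ in $\theta$ a.s.; the remaining integrability conditions \eqref{lift_eq_Y-integrability} for $Y$ translate directly into \eqref{lift_eq_X-integrability} for $X$ once we know $X_t=\mu[Y_t]$ a.e.

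For the second direction, given a mild solution $Y$, set $X_t:=\mu[Y_t]$. Predictability of $X$ follows from Borel measurability of $\mu[\cdot]:\cH_\mu\to\bR^n$ and predictability of $Y$ (a continuous adapted process is predictable). The bound in \cref{lift_lemm_space}(iii) applied on the set $\{Y_t\in\cV_\mu\}$ gives $|X_t|\leq(\int_\supp r\,\diff\mu)^{1/2}\|Y_t\|_{\cV_\mu}$, and integrating yields $\int_0^T|X_t|^2\,\diff t<\infty$ a.s.; the other two integrability requirements in \eqref{lift_eq_X-integrability} are already part of the mild-solution hypothesis \eqref{lift_eq_Y-integrability} combined with $X_t=\mu[Y_t]$. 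Regarding the equation, I would combine the $\theta$-pointwise identity \eqref{lift_eq_mild-sol} with the $\cH_\mu$-valued representation of \cref{lift_rem_measurable-convolution} to write
\begin{equation*}
Y_t = e^{-\cdot t}y+\int_0^t e^{-\cdot(t-s)}b(X_s)\,\diff s+\int_0^t e^{-\cdot(t-s)}\sigma(X_s)\,\diff W_s
\end{equation*}
in $\cH_\mu$ a.s.\ for each $t$, and then apply $\mu[\cdot]$: by the same argument as in the first direction, each summand lies in $\cV_\mu$ for a.e.\ $t$ a.s., so $\mu[\cdot]$ splits and \cref{lift_lemm_convolution} identifies $\mu$ on each term with the corresponding scalar convolution, producing exactly \eqref{lift_eq_SVIE} with forcing $\cK y$.

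Finally, for the equivalence of uniqueness, the two maps $X\mapsto Y=e^{-\cdot t}y+\cdots$ and $Y\mapsto X=\mu[Y]$ are inverse to each other modulo the respective null sets, as just verified, so uniqueness transfers in both directions. The main technical subtlety, which I would flag as the step to handle carefully, is precisely the non-linearity of $\mu[\cdot]$ on $\cH_\mu$ and the passage between $\theta$-pointwise and $\cH_\mu$-valued versions of the convolutions: both are resolved by ensuring we work on the full-measure time set where each convolution summand lies in $\cV_\mu$ and invoking the (stochastic) Fubini identifications packaged into \cref{lift_lemm_convolution} and \cref{lift_rem_measurable-convolution}.
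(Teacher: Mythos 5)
Your proposal is correct and follows essentially the same route as the paper: both directions rest on \cref{lift_lemm_convolution} for the $\cH_\mu$-valued (stochastic) convolutions and their identification with the scalar Volterra convolutions, on the linearity of $\mu[\cdot]$ restricted to $\cV_\mu$, and on the Borel measurability of $\mu[\cdot]$ for the predictability of $X=\mu[Y]$. Your explicit decomposition $Y=Y^0+Y^1+Y^2$ and the appeal to \cref{lift_rem_measurable-convolution} simply make explicit what the paper's proof does implicitly.
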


%% Proof

\begin{proof}
Assume that $X$ is a solution of the SVIE \eqref{lift_eq_SVIE} with $x=\cK y$. Define
\begin{equation*}
	Y_t:=e^{-\cdot t}y(\cdot)+\int^t_0e^{-\cdot(t-s)}b(X_s)\,\diff s+\int^t_0e^{-\cdot(t-s)}\sigma(X_s)\,\diff W_s,\ t\geq0.
\end{equation*}
Noting the integrability condition \eqref{lift_eq_X-integrability} (for $X$), by \cref{lift_lemm_convolution}, $Y$ is an $\cH_\mu$-valued continuous adapted process and satisfies $\int^T_0\|Y_t\|^2_{\cV_\mu}\,\diff t<\infty$ a.s.\ for any $T>0$. Furthermore, noting that the map $\mu[\cdot]$ is linear on $\cV_\mu$, it holds that
\begin{align*}
	\mu[Y_t]&=\mu\Big[e^{-\cdot t}y(\cdot)+\int^t_0e^{-\cdot(t-s)}b(X_s)\,\diff s+\int^t_0e^{-\cdot(t-s)}\sigma(X_s)\,\diff W_s\Big]\\
	&=(\cK y)(t)+\int^t_0K(t-s)b(X_s)\,\diff s+\int^t_0K(t-s)\sigma(X_s)\,\diff W_s=X_t
\end{align*}
a.s.\ for a.e.\ $t>0$. Thus, the integrability condition \eqref{lift_eq_Y-integrability} (for $Y$) and the equality \eqref{lift_eq_mild-sol} hold. Therefore, $Y$ is a mild solution of the lifted SEE \eqref{lift_eq_SEE}. The above observations show that $X_t=\mu[Y_t]$ a.s.\ for a.e.\ $t>0$.

Conversely, assume that $Y$ is a mild solution of the lifted SEE \eqref{lift_eq_SEE}, and define $X_t:=\mu[Y_t]$, $t>0$. Since $Y$ is an $\cH_\mu$-valued predictable process, and since the map $\mu:\cH_\mu\to\bR^n$ is Borel measurable, we see that $X$ is an $\bR^n$-valued predictable process. By the integrability condition \eqref{lift_eq_Y-integrability} (for $Y$) and \cref{lift_lemm_space} (iii), we see that the integrability condition \eqref{lift_eq_X-integrability} (for $X$) holds. Then, by the equality \eqref{lift_eq_mild-sol} and \cref{lift_lemm_convolution}, we have
\begin{align*}
	X_t&=\mu\Big[e^{-\cdot t}y(\cdot)+\int^t_0e^{-\cdot(t-s)}b(\mu[Y_s])\,\diff s+\int^t_0e^{-\cdot(t-s)}\sigma(\mu[Y_s])\,\diff W_s\Big]\\
	&=(\cK y)(t)+\int^t_0K(t-s)b(\mu[Y_s])\,\diff s+\int^t_0K(t-s)\sigma(\mu[Y_s])\,\diff W_s\\
	&=x(t)+\int^t_0K(t-s)b(X_s)\,\diff s+\int^t_0K(t-s)\sigma(X_s)\,\diff W_s
\end{align*}
a.s.\ for a.e.\ $t>0$. Therefore, $X$ is a solution of the SVIE \eqref{lift_eq_SVIE} with $x=\cK y$. Clearly, we have
\begin{equation*}
	Y_t=e^{-\cdot t}y(\cdot)+\int^t_0e^{-\cdot(t-s)}b(X_s)\,\diff s+\int^t_0e^{-\cdot(t-s)}\sigma(X_s)\,\diff W_s
\end{equation*}
a.s.\ for any $t\geq0$. This completes the proof.
\end{proof}

%% Remark

\begin{rem}
The lifted SEE \eqref{lift_eq_SEE} can be recast as an evolution equation defined in the Gelfand triplet $\cV_\mu\hookrightarrow\cH_\mu\hookrightarrow\cV^*_\mu$. Indeed, for each $y\in\cV_\mu$, define
\begin{equation*}
	\hat{b}(y)(\theta):=-\theta y(\theta)+b(\mu[y])=(Ay)(\theta)+b(\mu_0[y]),\ \theta\in\supp,
\end{equation*}
and
\begin{equation*}
	(\hat{\sigma}(y)w)(\theta):=\sigma(\mu[y])w=\sigma(\mu_0[y])w,\ \theta\in\supp,\ w\in\bR^d.
\end{equation*}
Then \eqref{lift_eq_SEE} can be written as follows:
\begin{equation}\label{lift_eq_variational}
	\begin{dcases}
	\diff Y_t=\hat{b}(Y_t)\,\diff t+\hat{\sigma}(Y_t)\,\diff W_t,\ t>0,\\
	Y_0=y\in\cH_\mu.
	\end{dcases}
\end{equation}
By \cref{lift_lemm_space}, the maps $\hat{b}:\cV_\mu\to\cV^*_\mu$ and $\hat{\sigma}:\cV_\mu\to L_2(\bR^d;\cH_\mu)$ are well-defined and measurable. Furthermore, if $b:\bR^n\to\bR^n$ and $\sigma:\bR^n\to\bR^{n\times d}$ are continuous, then $\hat{b}:\cV_\mu\to\cV^*_\mu$ and $\hat{\sigma}:\cV_\mu\to L_2(\bR^d;\cH_\mu)$ are continuous as well. Thus, the equation \eqref{lift_eq_variational} makes sense in the Gelfand triplet $\cV_\mu\hookrightarrow\cH_\mu\hookrightarrow\cV^*_\mu$. We say that $Y$ is a \emph{variational solution} (or an \emph{analytically strong solution}) of \eqref{lift_eq_variational} if $Y$ is an $\cH_\mu$-continuous adapted process satisfying the integrability condition \eqref{lift_eq_Y-integrability} a.s.\ for any $T>0$ such that
\begin{equation*}
	Y_t=y+\int^t_0\hat{b}(Y_s)\,\diff s+\int^t_0\hat{\sigma}(Y_s)\,\diff W_s
\end{equation*}
a.s.\ for any $t\geq0$. Here, the integrands $\hat{b}(Y_s)$ and $\hat{\sigma}(Y_s)$ are evaluated at a $\cV_\mu$-valued predictable version of $Y$, for example $\1_{\cV_\mu}(Y)Y$, and the integral with respect to $\diff s$ is in the Bochner sense in $\cV^*_\mu$. In our framework, it is easy to see that the two concepts of the variational solution of \eqref{lift_eq_variational} and the mild solution of \eqref{lift_eq_SEE} coincide. For general frameworks of the variational approach to infinite dimensional SEEs, we refer the readers to \cite{DaPrZa14,GaMa10,KrRo79} and references cited therein.
\end{rem}

%%%%%%%%%%%%%%
%% Subsection
%%%%%%%%%%%%%%

\subsection{Well-posedness of SEEs}\label{lift-well-posedness}

Next, we show the existence and uniqueness of the mild solution of the lifted SEE \eqref{lift_eq_SEE}. For the later purpose, we consider the following generalized SEE:
\begin{equation}\label{lift_eq_general-SEE}
	\begin{dcases}
	\diff Y_t(\theta)=-\theta Y_t(\theta)\,\diff t+b\big(t,\mu[Y_t],Y_t\big)\,\diff t+\sigma\big(t,\mu[Y_t],Y_t\big)\,\diff W_t,\ t>0,\ \theta\in\supp,\\
	Y_0(\theta)=\eta(\theta),\ \theta\in\supp,
	\end{dcases}
\end{equation}
where $\eta:\Omega\to\cH_\mu$ is an $\cF_0$-measurable random variable, and $b:\Omega\times[0,\infty)\times\bR^n\times\cH_\mu\to\bR^n$ and $\sigma:\Omega\times[0,\infty)\times\bR^n\times\cH_\mu\to\bR^{n\times d}$ are measurable maps satisfying suitable assumptions specified below.

%% Assumption

\begin{assum}\label{lift_assum_general-SEE}
\begin{itemize}
\item[(i)]
\emph{(The measurability condition)}.
$b$ and $\sigma$ are measurable, and $(b(t,x,y))_{t\geq0}$ and $(\sigma(t,x,y))_{t\geq0}$ are predictable for any $(x,y)\in\bR^n\times\cH_\mu$.
\item[(ii)]
\emph{(The linear growth condition)}.
There exists a constant $c_\LG>0$ and a nonnegative predictable process $\varphi$ satisfying $\bE\big[\int^T_0\varphi^2_t\,\diff t\big]<\infty$ for any $T>0$ such that
\begin{equation*}
	|b(t,x,y)|+|\sigma(t,x,y)|\leq\varphi_t+c_\LG\big\{|x|+\|y\|_{\cH_\mu}\big\}
\end{equation*}
for any $(t,x,y)\in[0,\infty)\times\bR^n\times\cH_\mu$ a.s.
\item[(iii)]
\emph{(The global Lipschitz condition)}.
There exists a constant $L>0$ such that
\begin{equation*}
	|b(t,x,y)-b(t,\bar{x},\bar{y})|+|\sigma(t,x,y)-\sigma(t,\bar{x},\bar{y})|\leq L\big\{|x-\bar{x}|+\|y-\bar{y}\|_{\cH_\mu}\big\}
\end{equation*}
for any $t\in[0,\infty)$, $x,\bar{x}\in\bR^n$ and $y,\bar{y}\in\cH_\mu$ a.s.
\item[(iii)']
\emph{(The local Lipschitz condition)}.
There exist two sequences $\{\tau_k\}_{k\in\bN}$ and $\{L_k\}_{k\in\bN}$ such that:
\begin{itemize}
\item
each $\tau_k$ is a stopping time, $\tau_k\leq\tau_{k+1}$ for any $k\in\bN$, and $\lim_{k\to\infty}\tau_k=\infty$ a.s.;
\item
each $L_k$ is a positive constant;
\item
for any $k\in\bN$, it holds that
\begin{equation*}
	|b(t,x,y)-b(t,\bar{x},\bar{y})|+|\sigma(t,x,y)-\sigma(t,\bar{x},\bar{y})|\leq L_k\big\{|x-\bar{x}|+\|y-\bar{y}\|_{\cH_\mu}\big\}
\end{equation*}
for any $t\in[0,\tau_k)$, $x,\bar{x}\in\bR^n$ and $y,\bar{y}\in\cH_\mu$ such that $\|y\|_{\cH_\mu}\vee\|\bar{y}\|_{\cH_\mu}\leq k$ a.s. 
\end{itemize}
\end{itemize}
\end{assum}

%% Remark

\begin{rem}
In \cref{Harnack}, we have to deal with a generalized SEE with local Lipschitz coefficients (see \eqref{Harnack_eq_coupling}).
In \cref{lift_assum_general-SEE} (iii)' (the local Lipschitz condition), we impose the locality with respect to the time variable and the $y$-variable, while it is global with respect to the $x$-variable. Indeed, this is sufficient for our purpose.
\end{rem}

The mild solution of the generalized SEE \eqref{lift_eq_general-SEE} is defined by the same way as in \cref{lift_defi_solution}.

%% Definition

\begin{defi}
Suppose that \cref{lift_assum_kernel} holds. Let $b:\Omega\times[0,\infty)\times\bR^n\times\cH_\mu\to\bR^n$ and $\sigma:\Omega\times[0,\infty)\times\bR^n\times\cH_\mu\to\bR^{n\times d}$ satisfy \cref{lift_assum_general-SEE} (i) (the measurability condition), and let $\eta:\Omega\to\cH_\mu$ be $\cF_0$-measurable. We say that an $\cH_\mu$-valued process $Y$ is a \emph{mild solution} of the generalized SEE \eqref{lift_eq_general-SEE} if $Y$ is $\cH_\mu$-continuous and adapted, the integrability conditions
\begin{equation*}
	\int^T_0\|Y_t\|^2_{\cV_\mu}\,\diff t<\infty,\ \int^T_0|b(t,\mu[Y_t],Y_t)|\,\diff t<\infty\ \text{and}\ \int^T_0|\sigma(t,\mu[Y_t],Y_t)|^2\,\diff t<\infty
\end{equation*}
hold a.s.\ for any $T>0$, and the equality
\begin{equation*}
	Y_t(\theta)=e^{-\theta t}\eta(\theta)+\int^t_0e^{-\theta(t-s)}b(s,\mu[Y_s],Y_s)\,\diff s+\int^t_0e^{-\theta(t-s)}\sigma(s,\mu[Y_s],Y_s)\,\diff W_s
\end{equation*}
holds for $\mu$-a.e.\ $\theta\in\supp$ a.s., for any $t\geq0$. We say that the (pathwise) uniqueness holds for the generalized SEE \eqref{lift_eq_general-SEE} if for any two mild solutions $Y^1$ and $Y^2$ of the generalized SEE \eqref{lift_eq_general-SEE}, it holds that $Y^1_t(\theta)=Y^2_t(\theta)$ for $\mu$-a.e.\ $\theta\in\supp$ a.s., for any $t\geq0$.
\end{defi}

%% Remark

\begin{rem}\label{lift_rem_Ito}
As we mentioned in \cref{lift_rem_solution}, for any mild solution $Y$ of the generalized SEE \eqref{lift_eq_general-SEE}, there exists a jointly measurable map $\tilde{Y}:\Omega\times[0,\infty)\times\supp\to\bR^n$ such that $Y_t=\tilde{Y}_t$ on $\cH_\mu$ a.s.\ for any $t\geq0$ and that, for any $\theta\in\supp$, the process $\tilde{Y}(\theta)=(\tilde{Y}_t(\theta))_{t\geq0}$ is an $\bR^n$-valued It\^{o} process satisfying
\begin{equation*}
	\begin{dcases}
	\diff\tilde{Y}_t(\theta)=-\theta\tilde{Y}_t(\theta)\,\diff t+b(t,\mu[\tilde{Y}_t],\tilde{Y}_t)\,\diff t+\sigma(t,\mu[\tilde{Y}_t],\tilde{Y}_t)\,\diff W_t,\ t>0,\\
	\tilde{Y}_0(\theta)=\eta(\theta),
	\end{dcases}
\end{equation*}
in the usual It\^{o}'s sense.
\end{rem}

In the following, we show the existence and uniqueness results for the generalized SEE \eqref{lift_eq_general-SEE} under \cref{lift_assum_general-SEE}. Actually, the generalized SEE \eqref{lift_eq_general-SEE} fits into the well-established framework of monotone SPDEs (cf.\ \cite{KrRo79}). To see this, suppose that \cref{lift_assum_general-SEE} (i) (the measurability condition), (ii) (the linear growth condition) and (iii) (the global Lipschitz condition) hold. Define
\begin{equation*}
	\hat{b}(t,y)(\theta):=-\theta y(\theta)+b(t,\mu[y],y),\ \theta\in\supp,\ t\geq0,\ y\in\cV_\mu,
\end{equation*}
and
\begin{equation*}
	(\hat{\sigma}(t,y)w)(\theta):=\sigma(t,\mu[y],y)w,\ \theta\in\supp,\ w\in\bR^d,\ t\geq0,\ y\in\cV_\mu.
\end{equation*}
By \cref{lift_lemm_space}, the maps $\hat{b}:\Omega\times[0,\infty)\times\cV_\mu\to\cV^*_\mu$ and $\hat{\sigma}:\Omega\times[0,\infty)\times\cV_\mu\to L_2(\bR^d;\cH_\mu)$ are well-defined and satisfy suitable measurability conditions. Furthermore, it can be easily shown that there exist constants $\kappa>0$, $\gamma\in\bR$ and a nonnegative predictable process $\psi$ satisfying $\bE\big[\int^T_0\psi_t\,\diff t\big]<\infty$ for any $T>0$ such that the following conditions hold:
\begin{itemize}
\item
(Continuity). The maps $\cV_\mu\ni y\mapsto\hat{b}(t,y)\in\cV^*_\mu$ and $\cV_\mu\ni y\mapsto\hat{\sigma}(t,y)\in L_2(\bR^d;\cH_\mu)$ are continuous for any $t\geq0$ a.s.
\item
(Monotonicity). It holds that
\begin{equation*}
	2\dual{\hat{b}(t,y)-\hat{b}(t,\bar{y})}{y-\bar{y}}+\|\hat{\sigma}(t,y)-\hat{\sigma}(t,\bar{y})\|^2_{L_2(\bR^d;\cH_\mu)}\leq\gamma\|y-\bar{y}\|^2_{\cH_\mu}
\end{equation*}
for any $t\geq0$ and $y,\bar{y}\in\cV_\mu$ a.s.
\item
(Coercivity). It holds that
\begin{equation*}
	2\dual{\hat{b}(t,y)}{y}+\|\hat{\sigma}(t,y)\|^2_{L_2(\bR^d;\cH_\mu)}\leq-\kappa\|y\|^2_{\cV_\mu}+\gamma\|y\|^2_{\cH_\mu}+\psi_t
\end{equation*}
for any $(t,y)\in[0,\infty)\times\cV_\mu$ a.s.
\item
(Growth). It holds that
\begin{equation*}
	\|\hat{b}(t,y)\|^2_{\cV^*_\mu}\leq\psi_t+\gamma\|y\|^2_{\cV_\mu}
\end{equation*}
for any $(t,y)\in[0,\infty)\times\cV_\mu$ a.s.
\end{itemize}
Therefore, by the well-known result of monotone SPDEs (cf.\ \cite{KrRo79}), the equation
\begin{equation*}
	\begin{dcases}
	\diff Y_t=\hat{b}(t,Y_t)\,\diff t+\hat{\sigma}(t,Y_t)\,\diff W_t,\ t>0,\\
	Y_0=\eta,
	\end{dcases}
\end{equation*}
with $\eta\in L^2_{\cF_0}(\cH_\mu)$ has a unique variational solution, which is the mild solution of the generalized SEE \eqref{lift_eq_general-SEE} in our framework.

However, the well-posedness of the generalized SEE \eqref{lift_eq_general-SEE} under \cref{lift_assum_general-SEE} (iii)' (the local Lipschitz condition) is not so trivial from the context of monotone SPDEs. Although the well-posedness of SPDEs with locally (in the space variable) monotone coefficients was proved by Liu and R\"{o}ckner \cite{LiRo10}, we cannot apply their results to our setting; our assumption is local in both space (the $y$ variable) and time.

Thus, we need further observations for the well-posedness of the generalized SEE \eqref{lift_eq_general-SEE}, especially in the local Lipschitz case. Here, we state our results, which will be proved in \hyperref[appendix]{Appendix}.

%% Theorem

\begin{theo}[A priori bound of mild solutions]\label{lift_theo_bound}
Suppose that \cref{lift_assum_kernel} holds. Let $b:\Omega\times[0,\infty)\times\bR^n\times\cH_\mu\to\bR^n$ and $\sigma:\Omega\times[0,\infty)\times\bR^n\times\cH_\mu\to\bR^{n\times d}$ satisfy \cref{lift_assum_general-SEE} (i) (the measurability condition) and (ii) (the linear growth condition). Then there exists a constant $C=C(\mu,c_\LG)>0$ such that, for any $T>0$, stopping time $\tau$, initial condition $\eta\in L^2_{\cF_0}(\cH_\mu)$, and any mild solution $Y$ of the generalized SEE \eqref{lift_eq_general-SEE}, the following holds:
\begin{equation*}
	\bE\Big[\sup_{t\in[0,T]}\|Y_{t\wedge\tau}\|^2_{\cH_\mu}+\int^{T\wedge\tau}_0\|Y_t\|^2_{\cV_\mu}\,\diff t\Big]\leq Ce^{CT}\bE\Big[\|\eta\|^2_{\cH_\mu}+\int^{T\wedge\tau}_0\varphi^2_t\,\diff t\Big]<\infty.
\end{equation*}
%In particular, each mild solution of the generalized SEE \eqref{lift_eq_general-SEE} with an $L^2$-initial condition is an $L^2$-mild solution.
\end{theo}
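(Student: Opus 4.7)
The plan is to derive an It\^{o}-type energy identity for $\|Y_t\|^2_{\cH_\mu}$, exploit the coercivity built into the Ornstein--Uhlenbeck drift to produce a $\|Y\|^2_{\cV_\mu}$ term on the left, neutralize the residual $\|Y\|^2_{\cV_\mu}$ contribution coming from $|\mu[Y_s]|^2$ via \cref{lift_lemm_space}~(iii), and close with a localization-plus-$\BDG$-plus-Gr\"{o}nwall argument.

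Concretely, I would first invoke the jointly measurable version $\tilde{Y}$ afforded by \cref{lift_rem_Ito}, apply the scalar It\^{o} formula to $|\tilde{Y}_t(\theta)|^2$, multiply by $r(\theta)$, and integrate against $\mu$ on $\supp$, justifying the exchange of the $\mu$-integral with the Lebesgue and stochastic integrals by the stochastic Fubini theorem (exactly as in \cref{lift_rem_measurable-convolution}). Writing $b_s:=b(s,\mu[Y_s],Y_s)$ and $\sigma_s:=\sigma(s,\mu[Y_s],Y_s)$, this produces the identity
\begin{equation*}
\|Y_t\|^2_{\cH_\mu}=\|\eta\|^2_{\cH_\mu}-2\int^t_0\!\!\int_\supp\!\theta r(\theta)|\tilde{Y}_s(\theta)|^2\dmu\,\diff s+2\int^t_0\langle b_s,Y_s\rangle_{\cH_\mu}\,\diff s+\int^t_0\|\sigma_s\|^2_{L_2(\bR^d;\cH_\mu)}\,\diff s+M_t,
\end{equation*}
where $M$ is a continuous local martingale whose quadratic variation is dominated by $\int^{\cdot}_0\|\sigma_s\|^2_{L_2(\bR^d;\cH_\mu)}\|Y_s\|^2_{\cH_\mu}\,\diff s$. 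By the coercivity identity in \cref{lift_lemm_space}~(ii), the second term on the right rewrites as $-2\int^t_0\|Y_s\|^2_{\cV_\mu}\,\diff s+2\int^t_0\|Y_s\|^2_{\cH_\mu}\,\diff s$, and the former migrates to the left-hand side.

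The main obstacle is the $|\mu[Y_s]|^2$ contribution that the linear growth condition injects into the right-hand side: since $\mu$ can be an infinite measure, $|\mu[Y_s]|$ is not controlled by $\|Y_s\|_{\cH_\mu}$ alone. This is exactly the situation addressed by \cref{lift_lemm_space}~(iii), which yields $|\mu[Y_s]|^2\leq\ep\|Y_s\|^2_{\cV_\mu}+M_\ep\|Y_s\|^2_{\cH_\mu}$ whenever $Y_s\in\cV_\mu$ (which is the case a.s.\ for a.e.\ $s$ by the mild-solution integrability hypothesis). Combined with the constant-in-$\theta$ identities $\|b_s\|_{\cH_\mu}^2=|b_s|^2\int_\supp r\dmu$ and $\|\sigma_s\|^2_{L_2(\bR^d;\cH_\mu)}=|\sigma_s|^2\int_\supp r\dmu$ from \cref{lift_lemm_space}~(i), a Young-inequality split of $2\langle b_s,Y_s\rangle_{\cH_\mu}$, and the linear growth bound, choosing $\ep$ small in terms of $c_\LG$ and $\int_\supp r\dmu$ lets the $\|Y_s\|^2_{\cV_\mu}$ residue be absorbed into the left-hand side, yielding
\begin{equation*}
\|Y_t\|^2_{\cH_\mu}+\int^t_0\|Y_s\|^2_{\cV_\mu}\,\diff s\leq\|\eta\|^2_{\cH_\mu}+C\int^t_0\varphi_s^2\,\diff s+C\int^t_0\|Y_s\|^2_{\cH_\mu}\,\diff s+M_t,
\end{equation*}
with a constant $C=C(\mu,c_\LG)$.

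To finish, I replace $t$ by $t\wedge\tau$ throughout (so that $M_{\cdot\wedge\tau}$ is still a local martingale), localize along a reducing sequence of stopping times so that the stopped martingale has zero expectation, take the supremum over $t\in[0,T]$, and apply the $\BDG$ inequality to $M$. The standard inequality $2ab\leq\tfrac{1}{2}a^2+2b^2$ absorbs a fraction of $\bE\big[\sup_{t\leq T\wedge\tau}\|Y_{t\wedge\tau}\|^2_{\cH_\mu}\big]$ back into the left-hand side, and Gr\"{o}nwall's inequality applied to $t\mapsto\bE\big[\sup_{s\leq t\wedge\tau}\|Y_{s\wedge\tau}\|^2_{\cH_\mu}\big]$ produces the factor $e^{CT}$. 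Monotone convergence removes the localization, and the finiteness of the right-hand side is immediate from $\eta\in L^2_{\cF_0}(\cH_\mu)$ together with the square-integrability of $\varphi$ built into \cref{lift_assum_general-SEE}~(ii).
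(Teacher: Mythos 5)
Your proposal is correct and follows essentially the same route as the paper's proof: It\^{o}'s formula applied to $|\tilde{Y}_t(\theta)|^2$ for the jointly measurable version, integration against $r(\theta)\dmu$ justified by the stochastic Fubini theorem, the coercivity identity of \cref{lift_lemm_space}~(ii) to move $\|Y\|^2_{\cV_\mu}$ to the left, \cref{lift_lemm_space}~(iii) with $\ep$ small to absorb the $|\mu[Y_s]|^2$ residue, and localization plus the $\BDG$ inequality with the $2ab\leq\tfrac{1}{2}a^2+2b^2$ absorption to close. The only cosmetic difference is that the paper works with the weighted quantity $e^{-\lambda t}\|Y_t\|^2_{\cH_\mu}$ and chooses $\lambda=\lambda(\mu,c_\LG)$ large enough to eliminate the $\|Y\|^2_{\cH_\mu}$ terms outright (the factor $e^{CT}$ then appearing when the weight is removed), whereas you keep those terms and invoke Gr\"{o}nwall's inequality at the localized level; both devices are equivalent and yield the same bound.
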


%% Proof

\begin{proof}
See \cref{appendix_1}.
\end{proof}

%% Theorem

\begin{theo}[Existence, uniqueness and stability; the global Lipschitz case]\label{lift_theo_global-Lip}
Suppose that \cref{lift_assum_kernel} holds. Let $b:\Omega\times[0,\infty)\times\bR^n\times\cH_\mu\to\bR^n$ and $\sigma:\Omega\times[0,\infty)\times\bR^n\times\cH_\mu\to\bR^{n\times d}$ satisfy \cref{lift_assum_general-SEE} (i) (the measurability condition), (ii) (the linear growth condition) and (iii) (the global Lipschitz condition). Then the following hold:
\begin{itemize}
\item[(i)]
For any initial condition $\eta\in L^2_{\cF_0}(\cH_\mu)$, there exists a unique mild solution $Y$ to the generalized SEE \eqref{lift_eq_general-SEE}.
\item[(ii)]
There exists a constant $C=C(\mu,L)>0$ such that, for any $T>0$, stopping time $\tau$, and any $(\eta,\bar{\eta},\bar{b},\bar{\sigma})\in L^2_{\cF_0}(\cH_\mu)\times L^2_{\cF_0}(\cH_\mu)\times L^2_\bF(0,T;\bR^n)\times L^2_\bF(0,T;\bR^{n\times d})$, it holds that
\begin{align*}
	&\bE\Big[\sup_{t\in[0,T]}\|Y_{t\wedge\tau}-\bar{Y}_{t\wedge\tau}\|^2_{\cH_\mu}+\int^{T\wedge\tau}_0\|Y_t-\bar{Y}_t\|^2_{\cV_\mu}\,\diff t\Big]\\
	&\leq Ce^{CT}\bE\Big[\|\eta-\bar{\eta}\|^2_{\cH_\mu}+\int^{T\wedge\tau}_0\Big\{|b(t,\mu[\bar{Y}_t],\bar{Y}_t)-\bar{b}_t|^2+|\sigma(t,\mu[\bar{Y}_t],\bar{Y}_t)-\bar{\sigma}_t|^2\Big\}\,\diff t\Big],
\end{align*}
where $Y$ is the mild solution of the generalized SEE \eqref{lift_eq_general-SEE} with the initial condition $\eta$, and $\bar{Y}$ is defined by
\begin{equation*}
	\bar{Y}_t:=e^{-\cdot t}\bar{\eta}(\cdot)+\int^t_0e^{-\cdot(t-s)}\bar{b}_s\,\diff s+\int^t_0e^{-\cdot(t-s)}\bar{\sigma}_s\,\diff W_s,\ t\in[0,T].
\end{equation*}
\end{itemize}
\end{theo}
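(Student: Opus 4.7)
For part (i), the plan is to invoke the variational/monotone SPDE framework as outlined in the paragraph immediately preceding the theorem. I would put the equation in the form
\begin{equation*}
\diff Y_t=\hat b(t,Y_t)\,\diff t+\hat\sigma(t,Y_t)\,\diff W_t
\end{equation*}
on the Gelfand triplet $\cV_\mu\hookrightarrow\cH_\mu\hookrightarrow\cV^*_\mu$, with $\hat b(t,y):=Ay+b(t,\mu_0[y],y)$ and $\hat\sigma(t,y):=\sigma(t,\mu_0[y],y)$. Under the global Lipschitz hypothesis, one verifies the four standard Krylov--Rozovskii hypotheses (continuity, monotonicity, coercivity, growth), and then their theorem produces a unique variational solution, which by the discussion preceding the theorem coincides with the required mild solution. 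The coercivity uses \cref{lift_lemm_space}(ii) to extract $\dual{Ay}{y}=-\|y\|^2_{\cV_\mu}+\|y\|^2_{\cH_\mu}$, and the Lipschitz-to-monotonicity translation uses \cref{lift_lemm_space}(iii) to bound $|\mu[y]-\mu[\bar y]|^2\le\varepsilon\|y-\bar y\|^2_{\cV_\mu}+M(\varepsilon)\|y-\bar y\|^2_{\cH_\mu}$, choosing $\varepsilon$ small enough for the bad $\mu$-term to be absorbed into the coercive one.

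For part (ii), set $Z:=Y-\bar Y$. By \cref{lift_lemm_convolution}, $\bar Y$ is itself the mild (and hence variational) solution of the linear equation with drift $\bar b$ and diffusion $\bar\sigma$, so $Z$ is a variational solution driven by $A$, drift $D_s:=b(s,\mu[Y_s],Y_s)-\bar b_s$, diffusion $E_s:=\sigma(s,\mu[Y_s],Y_s)-\bar\sigma_s$, and initial condition $\eta-\bar\eta$. For a localizing stopping time $\tau_k$, It\^o's formula on $\cH_\mu$ for variational solutions gives
\begin{align*}
\|Z_{t\wedge\tau_k}\|^2_{\cH_\mu}
&=\|\eta-\bar\eta\|^2_{\cH_\mu}
+\int^{t\wedge\tau_k}_0\bigl[2\dual{AZ_s}{Z_s}+2\langle D_s,Z_s\rangle_{\cH_\mu}+\|E_s\|^2_{L_2(\bR^d;\cH_\mu)}\bigr]\,\diff s\\
&\quad+2\int^{t\wedge\tau_k}_0\langle Z_s,E_s\,\diff W_s\rangle_{\cH_\mu}.
\end{align*}
Using \cref{lift_lemm_space}(ii) on the $A$-term, Young's inequality, the global Lipschitz bound applied to $D_s$ and $E_s$, and \cref{lift_lemm_space}(iii) with $\varepsilon$ small enough so that $|\mu[Z_s]|^2$ is absorbed into $-\|Z_s\|^2_{\cV_\mu}$, one arrives at
\begin{equation*}
\|Z_{t\wedge\tau_k}\|^2_{\cH_\mu}+c\int^{t\wedge\tau_k}_0\|Z_s\|^2_{\cV_\mu}\,\diff s\le\|\eta-\bar\eta\|^2_{\cH_\mu}+C\int^{t\wedge\tau_k}_0\bigl(\|Z_s\|^2_{\cH_\mu}+R_s\bigr)\,\diff s+N_{t\wedge\tau_k},
\end{equation*}
where $R_s:=|b(s,\mu[\bar Y_s],\bar Y_s)-\bar b_s|^2+|\sigma(s,\mu[\bar Y_s],\bar Y_s)-\bar\sigma_s|^2$ and $N$ is a local martingale. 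After choosing $\tau_k$ so that $N$ is a true martingale (using the a priori bound of \cref{lift_theo_bound} applied to $Y$ and to $\bar Y$ at level $k$), taking expectations and applying Gronwall gives the $\bE\int\|Z_s\|^2_{\cV_\mu}$ and $\sup_t\bE\|Z_{t\wedge\tau_k}\|^2_{\cH_\mu}$ bounds. The BDG inequality applied to $N$ then upgrades this to $\bE\sup_t\|Z_{t\wedge\tau_k}\|^2_{\cH_\mu}$, and $k\to\infty$ via monotone convergence yields the claimed estimate.

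The chief obstacle throughout is that $\mu[\cdot]:\cH_\mu\to\bR^n$ is neither linear nor continuous on $\cH_\mu$, so a naive Banach-contraction Picard iteration in the $\|\cdot\|_{\cH_\mu}$-norm cannot close. This is what forces the variational Gelfand-triplet route: the interpolation estimate \cref{lift_lemm_space}(iii) must be paired with the coercivity of $A$ to control the $\mu$-dependent coefficients, effectively trading $\cV_\mu$-coercivity for pointwise control of $|\mu[Z_s]|^2$. A secondary technical nuisance is that $\eta,\bar\eta$ are only in $L^2_{\cF_0}(\cH_\mu)$, so localization via $\tau_k$ is needed before taking expectations in the It\^o identity.
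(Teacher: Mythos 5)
Your proposal is correct, but it takes a genuinely different route from the paper on both parts. For part (i), you invoke the Krylov--Rozovskii monotone-SPDE theorem after verifying continuity, monotonicity, coercivity and growth on the Gelfand triplet; the paper acknowledges in \cref{lift-well-posedness} that this works under the global Lipschitz condition, but its actual proof in \cref{appendix_2} is a self-contained Banach fixed-point argument: it defines $\Phi(Y)$ by the mild formula and shows $\Phi$ is a strict contraction on the space $\cL_T$ for the weighted norm $\|Y\|_{T,\lambda,\kappa}=\bE[\sup_t e^{-\lambda t}\|Y_t\|^2_{\cH_\mu}+\int_0^Te^{-\lambda t}\{\kappa\|Y_t\|^2_{\cH_\mu}+\|Y_t\|^2_{\cV_\mu}\}\,\diff t]^{1/2}$. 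Your closing diagnosis is therefore half right: Picard iteration in the pure $\cH_\mu$-norm indeed cannot close because of the non-continuity of $\mu[\cdot]$ on $\cH_\mu$, but one is not \emph{forced} into the variational route --- augmenting the contraction norm with the $\int\|\cdot\|^2_{\cV_\mu}$ term lets \cref{lift_lemm_space}(iii) absorb the $|\mu[Y^1_t]-\mu[Y^2_t]|^2$ term exactly as in your monotonicity computation, and the fixed point then comes for free together with the iteration scheme that the paper later exploits in the proof of \cref{lift_theo_Markov} (measurable dependence on the initial datum). For part (ii), your direct It\^{o}-formula-plus-Gronwall-plus-BDG argument on $Z=Y-\bar Y$ is essentially a re-derivation of the proof of \cref{lift_theo_bound}; the paper instead observes that $Z$ is itself a mild solution of a generalized SEE with the shifted coefficients $\tilde b(t,x,y)=b(t,x+\mu[\bar Y_t],y+\bar Y_t)-\bar b_t$ and $\tilde\sigma(t,x,y)=\sigma(t,x+\mu[\bar Y_t],y+\bar Y_t)-\bar\sigma_t$, which satisfy the linear growth condition with $\varphi_t$ replaced by $|b(t,\mu[\bar Y_t],\bar Y_t)-\bar b_t|+|\sigma(t,\mu[\bar Y_t],\bar Y_t)-\bar\sigma_t|$, so the estimate is a one-line corollary of \cref{lift_theo_bound} with no new computation. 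Both of your arguments go through (the only cosmetic point is that the passage $k\to\infty$ is by Fatou/monotone convergence over the increasing quantities $\sup_{t\le T}\|Z_{t\wedge\tau_k}\|^2_{\cH_\mu}$ and $\int_0^{T\wedge\tau_k}\|Z_t\|^2_{\cV_\mu}\,\diff t$, and that the arbitrary stopping time $\tau$ in the statement should be inserted into the localization), but the paper's reduction is shorter and keeps the constant $C=C(\mu,L)$ transparent.
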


%% Proof

\begin{proof}
See \cref{appendix_2}.
\end{proof}

%% Theorem

\begin{theo}[Existence and uniqueness; the local Lipschitz case]\label{lift_theo_local-Lip}
Suppose that \cref{lift_assum_kernel} holds. Let $b:\Omega\times[0,\infty)\times\bR^n\times\cH_\mu\to\bR^n$ and $\sigma:\Omega\times[0,\infty)\times\bR^n\times\cH_\mu\to\bR^{n\times d}$ satisfy \cref{lift_assum_general-SEE} (i) (the measurability condition), (ii) (the linear growth condition) and (iii)' (the local Lipschitz condition). Then for any initial condition $\eta\in L^2_{\cF_0}(\cH_\mu)$, there exists a unique mild solution $Y$ to the generalized SEE \eqref{lift_eq_general-SEE}.
\end{theo}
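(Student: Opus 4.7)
The plan is to localize via a standard truncation argument, reducing matters to the globally Lipschitz case handled by \cref{lift_theo_global-Lip}. For each $k \in \bN$, let $\chi_k : \cH_\mu \to \cH_\mu$ be the radial retraction onto the closed ball of radius $k$ in $\cH_\mu$ (which is $1$-Lipschitz), and define
\begin{equation*}
b^k(\omega, t, x, y) := \1_{[0, \tau_k(\omega))}(t) \, b(\omega, t, x, \chi_k(y)), \quad \sigma^k(\omega, t, x, y) := \1_{[0, \tau_k(\omega))}(t) \, \sigma(\omega, t, x, \chi_k(y)).
\end{equation*}
Using \cref{lift_assum_general-SEE}~(iii)$'$ together with $\|\chi_k(y) - \chi_k(\bar y)\|_{\cH_\mu} \leq \|y - \bar y\|_{\cH_\mu}$ and $\|\chi_k(y)\|_{\cH_\mu} \leq \|y\|_{\cH_\mu}$, one checks that $(b^k, \sigma^k)$ satisfies \cref{lift_assum_general-SEE}~(i)--(iii) globally with Lipschitz constant $L_k$ and the original (hence $k$-independent) linear growth data $(c_\LG, \varphi)$. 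Hence \cref{lift_theo_global-Lip} yields a unique mild solution $Y^k$ to the SEE with coefficients $(b^k, \sigma^k)$ and initial condition $\eta$.

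Next, I would introduce the stopping times $\rho_k := \tau_k \wedge \inf\{t \geq 0 : \|Y^k_t\|_{\cH_\mu} \geq k\}$ (with $\inf \emptyset := \infty$). On $[0, \rho_k)$ the truncation is inert, so $Y^k$ also satisfies the $(k+1)$-truncated SEE there; the uniqueness in \cref{lift_theo_global-Lip} then gives $Y^k_t = Y^{k+1}_t$ for $t \in [0, \rho_k]$ and in particular $\rho_k \leq \rho_{k+1}$. Consequently $Y_t := Y^k_t$ for $t \in [0, \rho_k)$ is a well-defined $\cH_\mu$-continuous adapted process on $[0, \rho_\infty)$, where $\rho_\infty := \sup_k \rho_k$, and it satisfies the mild formulation of \eqref{lift_eq_general-SEE} up to each $\rho_k$.

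The main obstacle is to show $\rho_\infty = \infty$ a.s., and this is precisely where the uniform linear growth assumption is crucial. Applying \cref{lift_theo_bound} to the $k$-truncated problem at the stopping time $\rho_k$, and using that $(b^k, \sigma^k)$ obey the same linear growth data $(c_\LG, \varphi)$ for every $k$, one obtains
\begin{equation*}
\bE\Big[\sup_{t \in [0, T]} \|Y^k_{t \wedge \rho_k}\|^2_{\cH_\mu}\Big] \leq C e^{CT} \bE\Big[\|\eta\|^2_{\cH_\mu} + \int^T_0 \varphi^2_t \,\diff t\Big] =: M_T < \infty
\end{equation*}
uniformly in $k$. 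On $\{\rho_k < T\} \cap \{\tau_k > T\}$, the $\cH_\mu$-continuity of $Y^k$ forces $\|Y^k_{\rho_k}\|_{\cH_\mu} = k$, so Markov's inequality gives $\bP(\rho_k < T, \tau_k > T) \leq M_T / k^2$; combined with $\bP(\tau_k \leq T) \to 0$ as $k\to\infty$, this yields $\bP(\rho_\infty \leq T) = 0$ for every $T > 0$. Finally, uniqueness of the global mild solution follows by the same localization: given two mild solutions $Y, \bar Y$, the stopping times $\sigma_k := \tau_k \wedge \inf\{t : \|Y_t\|_{\cH_\mu} \vee \|\bar Y_t\|_{\cH_\mu} \geq k\}$ tend to $\infty$ a.s.\ by path continuity, and on $[0, \sigma_k]$ both processes coincide with the unique mild solution of the $k$-truncated globally Lipschitz SEE.
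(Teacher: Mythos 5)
Your proposal is correct and follows essentially the same route as the paper's proof in the appendix: the same radial truncation $\pi_k$ combined with the time cut-off $\1_{[0,\tau_k)}$, an appeal to \cref{lift_theo_global-Lip} for the truncated problems, the $k$-uniform a priori bound from \cref{lift_theo_bound} to rule out explosion, and localization for uniqueness. The only cosmetic difference is that your consistency step ($Y^k=Y^{k+1}$ on $[0,\rho_k]$) should formally invoke the stability estimate of \cref{lift_theo_global-Lip}~(ii) with the stopping time $\rho_k$ rather than the bare global uniqueness statement, since the two truncated equations only agree on a stochastic interval; this is exactly how the paper argues.
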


%% Proof

\begin{proof}
See \cref{appendix_3}.
\end{proof}

%%%%%%%%%%%%%%
%% Subsection
%%%%%%%%%%%%%%

\subsection{Markov property of the mild solution of the lifted SEE \eqref{lift_eq_SEE}}\label{lift-Markov}

We turn to the SVIE \eqref{lift_eq_SVIE} and the lifted SEE \eqref{lift_eq_SEE} with deterministic coefficients $b:\bR^n\to\bR^n$ and $\sigma:\bR^n\to\bR^{n\times d}$. The following theorem shows that the mild solution $Y$ of the lifted SEE \eqref{lift_eq_SEE} is indeed a ``Markovian'' lift of the SVIE \eqref{lift_eq_SVIE}. The proof is standard (see for example \cite[Chapter 9]{DaPrZa14}), but we give a complete proof here for readers' convenience.

%% Theorem

\begin{theo}\label{lift_theo_Markov}
Let \cref{lift_assum_kernel} hold. Suppose that the maps $b:\bR^n\to\bR^n$ and $\sigma:\bR^n\to\bR^{n\times d}$ are globally Lipschitz continuous. Then for any initial condition $y\in\cH_\mu$, there exists a unique mild solution $Y^y$ of the lifted SEE \eqref{lift_eq_SEE}. The family $\{Y^y_t\}_{t\geq0,y\in\cH_\mu}$ of the mild solutions is a time-homogeneous Markov process on $\cH_\mu$ with the transition semigroup $\{P_t\}_{t\geq0}$ defined by
\begin{equation*}
	P_tf(y):=\bE[f(Y^y_t)],\ t\geq0,\ y\in\cH_\mu,\ f\in\Bb,
\end{equation*}
where $\Bb$ denotes the set of bounded Borel measurable functions $f:\cH_\mu\to\bR$. Furthermore, $\{Y^y_t\}_{t\geq0,y\in\cH_\mu}$ has the Feller property, namely, $P_tf\in\Cb$ for any $f\in\Cb$ and any $t\geq0$, where $\Cb$ denotes the set of bounded continuous funcions $f:\cH_\mu\to\bR$.
\end{theo}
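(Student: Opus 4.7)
The plan is to recast the lifted SEE \eqref{lift_eq_SEE} as an instance of the generalized SEE \eqref{lift_eq_general-SEE}, invoke \cref{lift_theo_global-Lip} for existence, uniqueness and the stability estimate, and then get the Feller property from stability and the Markov property from a shift-and-freeze argument based on pathwise uniqueness. First I would set $\tilde{b}(t,x,y):=b(x)$ and $\tilde{\sigma}(t,x,y):=\sigma(x)$, so that \eqref{lift_eq_SEE} becomes a particular case of \eqref{lift_eq_general-SEE}. The global Lipschitz continuity of $b$ and $\sigma$, combined with $|b(x)|+|\sigma(x)|\leq(|b(0)|+|\sigma(0)|)+L|x|$, verifies \cref{lift_assum_general-SEE}\,(i)--(iii) with $\varphi_t\equiv|b(0)|+|\sigma(0)|$. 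Since any $y\in\cH_\mu$ lies in $L^2_{\cF_0}(\cH_\mu)$ as a deterministic element, \cref{lift_theo_global-Lip} immediately yields a unique mild solution $Y^y$ for every $y\in\cH_\mu$, together with the stability bound that will drive the remaining steps.

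For the Feller property, take $y_n\to y$ in $\cH_\mu$ and apply \cref{lift_theo_global-Lip}\,(ii) with $\eta=y_n$, $\bar{\eta}=y$, $\bar{b}_t=b(\mu[Y^y_t])$ and $\bar{\sigma}_t=\sigma(\mu[Y^y_t])$. Then $\bar{Y}=Y^y$ by \cref{lift_theo_equivalence}-type identities encoded in the mild-solution formula, and the right-hand side collapses to $Ce^{CT}\|y_n-y\|^2_{\cH_\mu}\to0$. Hence $\sup_{t\in[0,T]}\|Y^{y_n}_t-Y^y_t\|_{\cH_\mu}\to 0$ in $L^2(\bP)$, so $Y^{y_n}_t\to Y^y_t$ in probability in $\cH_\mu$ for each fixed $t\geq 0$. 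For $f\in\Cb$, continuity of $f$ and bounded convergence then yield $P_tf(y_n)=\bE[f(Y^{y_n}_t)]\to\bE[f(Y^y_t)]=P_tf(y)$, so $P_tf\in\Cb$. A monotone class argument extends Borel measurability of $P_tf$ to all $f\in\Bb$, which is exactly the technical input needed for the Markov identity below.

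For the Markov property, fix $s\geq0$ and $y\in\cH_\mu$, set $\tilde{W}_u:=W_{s+u}-W_s$ (a Brownian motion independent of $\cF_s$ in the shifted filtration $\tilde{\cF}_u:=\cF_{s+u}$), and put $\tilde{Y}_u:=Y^y_{s+u}$. Splitting the mild-solution integrals in \eqref{lift_eq_mild-sol} over $[0,s]$ and $[s,s+u]$ and using $e^{-\theta(s+u-r)}=e^{-\theta u}e^{-\theta(s-r)}$ on the first piece yields
\[
\tilde{Y}_u(\theta)=e^{-\theta u}Y^y_s(\theta)+\int_0^u e^{-\theta(u-v)}b(\mu[\tilde{Y}_v])\,\diff v+\int_0^u e^{-\theta(u-v)}\sigma(\mu[\tilde{Y}_v])\,\diff\tilde{W}_v
\]
for $\mu$-a.e.\ $\theta\in\supp$, a.s., for every $u\geq0$. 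Thus $\tilde{Y}$ is a mild solution of \eqref{lift_eq_SEE} in $(\tilde{\cF}_u)$ driven by $\tilde{W}$ with $\tilde{\cF}_0$-measurable initial condition $Y^y_s$. The uniqueness in \cref{lift_theo_global-Lip}, applied pathwise in the shifted filtration and conditionally on $\cF_s$, identifies $\tilde{Y}_t$ with the value at time $t$ of the solution started at $Y^y_s$ and driven by $\tilde{W}$. Freezing $Y^y_s$ and using its $\cF_s$-measurability together with the independence of $\tilde{W}$ and $\cF_s$ gives
\[
\bE[f(Y^y_{s+t})\mid\cF_s]=(P_tf)(Y^y_s)\quad\text{a.s.}
\]
for $f\in\Cb$; the monotone class extension from the previous paragraph then upgrades this to all $f\in\Bb$. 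Time-homogeneity is immediate because $\tilde{b},\tilde{\sigma}$ do not depend on~$t$.

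The main obstacle is the freezing step, namely justifying $\bE[f(\Phi(Y^y_s))\mid\cF_s]=\bE[f(\Phi(z))]\big|_{z=Y^y_s}$ for the map $\Phi(z):=Y^z_t[\tilde{W}]$, since the Borel-measurable integral operator $\mu[\cdot]$ on $\cH_\mu$ is not continuous and we do not have a canonical jointly measurable stochastic flow a priori. The Feller property gives continuity, hence Borel measurability, of $z\mapsto\bE[f(\Phi(z))]$ for $f\in\Cb$; approximating $Y^y_s$ by simple $\cF_s$-valued random variables (for which freezing is trivial) and passing to the limit via the stability estimate transfers the identity to arbitrary $\cF_s$-measurable initial data, and then the monotone class argument handles general $f\in\Bb$.
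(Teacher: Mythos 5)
Your proposal is correct and follows essentially the same route as the paper: reduction to the generalized SEE and \cref{lift_theo_global-Lip} for existence, uniqueness and stability, the Feller property from the stability estimate, and the Markov property via the shift identity plus a freezing argument carried out first for simple $\cF_s$-measurable initial data and then extended by the stability bound. The paper additionally constructs an $\bF^{t_0}$-predictable, jointly measurable version of $(\omega,t,y)\mapsto\bar{\cY}^{t_0,y}_t(\omega)$ via Picard iteration and invokes the Yamada--Watanabe theorem to identify the law of the $\tilde W$-driven solution with that of $Y^{\bar y}$, but these are the same ingredients you identify (and partially sidestep) in your final paragraph.
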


%% Proof

\begin{proof}
By \cref{lift_theo_global-Lip}, there exists a unique mild solution $Y^y$ of the lifted SEE \eqref{lift_eq_SEE} for any initial condition $y\in\cH_\mu$. More generally, for any $t_0\geq0$, $\eta\in L^2_{\cF_{t_0}}(\cH_\mu)$ and $y\in\cH_\mu$, consider the following SEEs:
\begin{equation*}
	\begin{dcases}
	\diff \cY^{t_0,\eta}_t(\theta)=-\theta\cY^{t_0,\eta}_t(\theta)\,\diff t+b(\mu[\cY^{t_0,\eta}_t])\,\diff t+\sigma(\mu[\cY^{t_0,\eta}_t])\,\diff W^{t_0}_t,\ t>0,\ \theta\in\supp,\\
	\cY^{t_0,\eta}_0(\theta)=\eta(\theta),\ \theta\in\supp,
	\end{dcases}
\end{equation*}
and
\begin{equation*}
	\begin{dcases}
	\diff \bar{\cY}^{t_0,y}_t(\theta)=-\theta\bar{\cY}^{t_0,y}_t(\theta)\,\diff t+b(\mu[\bar{\cY}^{t_0,y}_t])\,\diff t+\sigma(\mu[\bar{\cY}^{t_0,y}_t])\,\diff W^{t_0}_t,\ t>0,\ \theta\in\supp,\\
	\bar{\cY}^{t_0,y}_0(\theta)=y(\theta),\ \theta\in\supp,
	\end{dcases}
\end{equation*}
where $W^{t_0}_t:=W_{t_0+t}-W_{t_0}$, $t\geq0$, which is a Brownian motion independent of $\cF_{t_0}$. By \cref{lift_theo_global-Lip}, there exist unique mild solutions $\cY^{t_0,\eta}$ and $\bar{\cY}^{t_0,y}$ to the above SEEs. Clearly, $Y^y=\cY^{0,y}=\bar{\cY}^{0,y}$ for any $y\in\cH_\mu$. Note that, for any $t_0,t\geq0$,
\begin{align*}
	Y^y_{t_0+t}(\theta)&=e^{-\theta(t_0+t)}y(\theta)+\int^{t_0+t}_0e^{-\theta(t_0+t-s)}b(\mu[Y^y_s])\,\diff s+\int^{t_0+t}_0e^{-\theta(t_0+t-s)}\sigma(\mu[Y^y_s])\,\diff W_s\\
	&=e^{-\theta t}\Big\{e^{-\theta t_0}y(\theta)+\int^{t_0}_0e^{-\theta(t_0-s)}b(\mu[Y^y_s])\,\diff s+\int^{t_0}_0e^{-\theta(t_0-s)}\sigma(\mu[Y^y_s])\,\diff W_s\Big\}\\
	&\hspace{0.5cm}+\int^{t_0+t}_{t_0}e^{-\theta(t_0+t-s)}b(\mu[Y^y_s])\,\diff s+\int^{t_0+t}_{t_0}e^{-\theta(t_0+t-s)}\sigma(\mu[Y^y_s])\,\diff W_s\\
	&=e^{-\theta t}Y^y_{t_0}(\theta)+\int^t_0e^{-\theta(t-s)}b(\mu[Y^y_{t_0+s}])\,\diff s+\int^t_0e^{-\theta(t-s)}\sigma(\mu[Y^y_{t_0+s}])\,\diff W^{t_0}_s
\end{align*}
for $\mu$-a.e.\ $\theta\in\supp$ a.s. Thus, by the uniqueness, we have $Y^y_{t_0+t}=\cY^{t_0,Y^y_{t_0}}_t$ for any $t\geq0$ a.s. Furthermore, by the uniqueness in law of the SEE \eqref{lift_eq_SEE}, which follows from the pathwise uniqueness and the Yamada--Watanabe theorem (see \cite{Ku14} for the general form of the Yamada--Watanabe theorem), we see that $Y^y$ and $\bar{\cY}^{t_0,y}$ have the same law in the space $C([0,\infty);\cH_\mu)$ for any $t_0\geq0$ and any $y\in\cH_\mu$. This implies that
\begin{equation*}
	\bE[f(\bar{\cY}^{t_0,y}_t)]=\bE[f(Y^y_t)]=P_tf(y)
\end{equation*}
for any $t,t_0\geq0$, $y\in\cH_\mu$ and $f\in\Bb$.

Now we show that:
\begin{itemize}
\item[(i)]
there exists a version of $\bar{\cY}^{t_0,y}$ for each $t_0\geq0$ and $y\in\cH_\mu$ such that the map $(\omega,t,y)\mapsto\bar{\cY}^{t_0,y}_t(\omega)\in\cH_\mu$ is $\bF^{t_0}$-predictable, and hence independent of $\cF_{t_0}$, where $\bF^{t_0}=(\cF^{t_0}_t)_{t\geq0}$ is the filtration generated by the Brownian motion $W^{t_0}$;
\item[(ii)]
for any $\eta\in L^2_{\cF_{t_0}}(\cH_\mu)$, we have $\cY^{t_0,\eta}_t(\omega)=\bar{\cY}^{t_0,\eta(\omega)}_t(\omega)$ for any $t\geq0$ for $\bP$-a.e.\ $\omega\in\Omega$.
\end{itemize}
The above two assertions imply that
\begin{equation*}
	\bE\left[f\big(Y^y_{t_0+t}\big)\relmiddle|\cF_{t_0}\right]=\bE\left[f\Big(\cY^{t_0,Y^y_{t_0}}_t\Big)\relmiddle|\cF_{t_0}\right]=\bE\left[f\Big(\bar{\cY}^{t_0,Y^y_{t_0}}_t\Big)\relmiddle|\cF_{t_0}\right]=P_tf\big(Y^y_{t_0}\big)\ \text{a.s.}
\end{equation*}
for any $t,t_0\geq0$, $y\in\cH_\mu$ and $f\in\Bb$, which means that $\{Y^y_t\}_{t\geq0,y\in\cH_\mu}$ is a time-homogeneous Markov process on $\cH_\mu$ with the transition semigroup $\{P_t\}_{t\geq0}$.

We prove the assertion (i). Let $t_0\geq0$ be fixed. For each $y\in\cH_\mu$, Define $\bar{\cY}^{t_0,y,0}_t:=0$, $t\geq0$, and
\begin{equation*}
	\bar{\cY}^{t_0,y,k}_t(\theta):=e^{-\theta t}y(\theta)+\int^t_0e^{-\theta(t-s)}b(\mu[\bar{\cY}^{t_0,y,k-1}_s])\,\diff s+\int^t_0e^{-\theta(t-s)}\sigma(\mu[\bar{\cY}^{t_0,y,k-1}_s])\,\diff W^{t_0}_s
\end{equation*}
for $t\geq0$, $\theta\in\supp$ and $k\in\bN$. Noting \cite[Theorem 63 and its corollary]{Pr05} for the joint measurability of parametrized stochastic integrals, by the induction, we see that each $\bar{\cY}^{t_0,y,k}$ has a version (again denoted by $\bar{\cY}^{t_0,y,k}$) such that the map $(\omega,t,y)\mapsto\bar{\cY}^{t_0,y,k}_t(\omega)\in\cH_\mu$ is $\bF^{t_0}$-predictable. Define
\begin{equation*}
	\bar{\cY}^{t_0,y,\infty}_t:=
	\begin{dcases}
	\lim_{k\to\infty}\bar{\cY}^{t_0,y,k}_t\ &\text{if the limit exists},\\
	0\ &\text{otherwise},
	\end{dcases}
\end{equation*}
where the limit is taken in $\cH_\mu$. Then $(\omega,t,y)\mapsto\bar{\cY}^{t_0,y,\infty}_t(\omega)$ is $\bF^{t_0}$-predictable. From the proof of \cref{lift_theo_global-Lip} (see also \cref{appendix_rem_fixed-point}), we see that $\lim_{k\to\infty}\|\bar{\cY}^{t_0,y}-\bar{\cY}^{t_0,y,k}\|_{T,\lambda,\kappa}=0$ for any $T>0$, where the constants $\lambda,\kappa>0$ and the norm $\|\cdot\|_{T,\lambda,\kappa}$ were defined in the proof of \cref{lift_theo_global-Lip}. This implies that $\bar{\cY}^{t_0,y}_t=\bar{\cY}^{t_0,y,\infty}_t$ a.s.\ for any $t\geq0$ and any $y\in\cH_\mu$. Therefore, $\bar{\cY}^{t_0,y,\infty}$ is a desired version of $\bar{\cY}^{t_0,y}$.

Next, we prove the assertion (ii). We first assume that $\eta\in L^2_{\cF_{t_0}}(\cH_\mu)$ is of the form $\eta(\theta,\omega)=\sum^\infty_{i=1}y_i(\theta)\1_{A_i}(\omega)$, where $y_i\in\cH_\mu$, $i\in\bN$, are deterministic and $\{A_i\}_{i\in\bN}$ is an $\cF_{t_0}$-measurable partition of $\Omega$. Define $\tilde{\cY}^{t_0,\eta}_t(\theta,\omega):=\bar{\cY}^{t_0,\eta(\omega)}_t(\theta,\omega)$. Then
\begin{align*}
	\tilde{\cY}^{t_0,\eta}_t(\theta)&=\sum^\infty_{i=1}\bar{\cY}^{t_0,y_i}_t(\theta)\1_{A_i}\\
	&=\sum^\infty_{i=1}\Big\{e^{-\theta t}y_i(\theta)+\int^t_0e^{-\theta(t-s)}b(\mu[\bar{\cY}^{t_0,y_i}_s])\,\diff s+\int^t_0e^{-\theta(t-s)}\sigma(\mu[\bar{\cY}^{t_0,y_i}_s])\,\diff W^{t_0}_s\Big\}\1_{A_i}\\
	&=e^{-\theta t}\sum^\infty_{i=1}y_i(\theta)\1_{A_i}+\int^t_0e^{-\theta(t-s)}\sum^\infty_{i=1}b(\mu[\bar{\cY}^{t_0,y_i}_s])\1_{A_i}\,\diff s+\int^t_0e^{-\theta(t-s)}\sum^\infty_{i=1}\sigma(\mu[\bar{\cY}^{t_0,y_i}_s])\1_{A_i}\,\diff W^{t_0}_s\\
	&=e^{-\theta t}\eta(\theta)+\int^t_0e^{-\theta(t-s)}b(\mu[\tilde{\cY}^{t_0,\eta}_s])\,\diff s+\int^t_0e^{-\theta(t-s)}\sigma(\mu[\tilde{\cY}^{t_0,\eta}_s])\,\diff W^{t_0}_s,
\end{align*}
for $\mu$-a.e.\ $\theta\in\supp$ a.s.\ for any $t\geq0$. Thus, by the uniqueness, we have $\tilde{\cY}^{t_0,\eta}_t=\cY^{t_0,\eta}_t$ for any $t\geq0$ a.s., which shows the assertion (ii) in the special case.

In the general case, we can take a sequence $\eta_k\in L^2_{\cF_{t_0}}(\cH_\mu)$, $k\in\bN$, such that $\eta_k$ are of the above special forms and $\|\eta_k-\eta\|_{\cH_\mu}\leq\frac{1}{k}$ a.s.\ for any $k\in\bN$. Then for each $k\in\bN$, we have $\cY^{t_0,\eta_k}_t(\omega)=\bar{\cY}^{t_0,\eta_k(\omega)}_t(\omega)$ for any $t\geq0$ a.s. By \cref{lift_theo_global-Lip} (ii), there exists a constant $C>0$ such that, for any $T>0$,
\begin{equation*}
	\bE\Big[\sup_{t\in[0,T]}\|\bar{\cY}^{t_0,y_1}_t-\bar{\cY}^{t_0,y_2}_t\|^2_{\cH_\mu}\Big]\leq Ce^{CT}\|y_1-y_2\|^2_{\cH_\mu}
\end{equation*}
for any $y_1,y_2\in\cH_\mu$, and
\begin{equation*}
	\bE\Big[\sup_{t\in[0,T]}\|\cY^{t_0,\eta_1}_t-\cY^{t_0,\eta_2}_t\|^2_{\cH_\mu}\Big]\leq Ce^{CT}\bE\big[\|\eta_1-\eta_2\|^2_{\cH_\mu}\big]
\end{equation*}
for any $\eta_1,\eta_2\in L^2_{\cF_{t_0}}(\cH_\mu)$. On the one hand, noting that the map $(\omega,t,y)\mapsto\bar{\cY}^{t_0,y}_t(\omega)$ is $\bF^{t_0}$-predictable and hence independent of $\cF_{t_0}$, we have
\begin{align*}
	\bE\Big[\sup_{t\in[0,T]}\|\bar{\cY}^{t_0,\eta(\omega)}_t(\omega)-\bar{\cY}^{t_0,\eta_k(\omega)}_t(\omega)\|^2_{\cH_\mu}\Big]&=\bE\Big[\bE\Big[\sup_{t\in[0,T]}\|\bar{\cY}^{t_0,y_1}_t-\bar{\cY}^{t_0,y_2}_t\|^2_{\cH_\mu}\Big]\Big|_{y_1=\eta(\omega),y_2=\eta_k(\omega)}\Big]\\
	&\leq Ce^{CT}\bE\big[\|\eta-\eta_k\|^2_{\cH_\mu}\big]\leq\frac{1}{k^2}Ce^{CT}
\end{align*}
for any $T>0$ and any $k\in\bN$. On the other hand, we have
\begin{equation*}
	\bE\Big[\sup_{t\in[0,T]}\|\cY^{t_0,\eta}_t-\cY^{t_0,\eta_k}_t\|^2_{\cH_\mu}\Big]\leq Ce^{CT}\bE\big[\|\eta-\eta_k\|^2_{\cH_\mu}\big]\leq\frac{1}{k^2}Ce^{CT}
\end{equation*}
for any $T>0$ and any $k\in\bN$. Thus, we have $\bE\big[\sup_{t\in[0,T]}\|\bar{\cY}^{t_0,\eta(\omega)}_t(\omega)-\cY^{t_0,\eta}_t(\omega)\|^2_{\cH_\mu}\big]=0$ for any $T>0$, and hence $\cY^{t_0,\eta}_t(\omega)=\bar{\cY}^{t_0,\eta(\omega)}_t(\omega)$ for any $t\geq0$ for $\bP$-a.e.\ $\omega\in\Omega$. This proves the assertion (ii).

It remains to show the Feller property. Again by \cref{lift_theo_global-Lip} (ii), there exists a constant $C>0$ such that
\begin{equation*}
	\bE\Big[\sup_{t\in[0,T]}\|Y^{y_1}_t-Y^{y_2}_t\|^2_{\cH_\mu}\Big]\leq Ce^{CT}\|y_1-y_2\|^2_{\cH_\mu}
\end{equation*}
for any $T>0$ and any $y_1,y_2\in\cH_\mu$. This implies that the map $y\mapsto Y^{y}_t$ is continuous in probability for any $t\geq0$. Thus, by the dominated convergence theorem, for any $f\in\Cb$, $t\geq0$, and any sequence $y_k\in\cH_\mu$, $k\in\bN$, such that $\lim_{k\to\infty}y_k=y\in\cH_\mu$, we have
\begin{equation*}
	\lim_{k\to\infty}P_tf(y_k)=\lim_{k\to\infty}\bE\big[f(Y^{y_k}_t)\big]=\bE\big[f(Y^y_t)\big]=P_tf(y).
\end{equation*}
Therefore, $\{P_t\}_{t\geq0}$ has the Feller property. This completes the proof.
\end{proof}

%%%%%%%%%%%%%%%%%%%%%%%%%%%%%%%%%%
%%%%%%%%%%%%%%%%%%%%%%%%%%%%%%%%%%
%% Section
%%%%%%%%%%%%%%%%%%%%%%%%%%%%%%%%%%
%%%%%%%%%%%%%%%%%%%%%%%%%%%%%%%%%%

\section{Lifting Gaussian Volterra processes}\label{Gauss}

In this section, we demonstrate the tractability of our framework with the simplest example. For simplicity of notation, in this section, we assume that $n=d=1$. Let \cref{lift_assum_kernel} hold, and consider the SVIE \eqref{lift_eq_SVIE} with $b=0$ and $\sigma=1$:
\begin{equation}\label{Gauss_eq_SVIE-OU}
	X_t=x(t)+\int^t_0K(t-s)\,\diff W_s,\ t>0,
\end{equation}
which defines a Gaussian Volterra process on $\bR$. Let $x=\cK y$ for some $y\in\cH_\mu$. The corresponding lifted SEE \eqref{lift_eq_SEE} becomes
\begin{equation}\label{Gauss_eq_SEE-OU}
	\begin{dcases}
	\diff Y_t(\theta)=-\theta Y_t(\theta)\,\diff t+\diff W_t,\ t>0,\ \theta\in\supp,\\
	Y_0(\theta)=y(\theta),\ \theta\in\supp.
	\end{dcases}
\end{equation}
The unique mild solution of the above SEE is given by
\begin{equation*}
	Y_t=e^{-\cdot t}y(\cdot)+\int^t_0e^{-\cdot(t-s)}\,\diff W_s,\ t\geq0.
\end{equation*}
Similar infinite dimensional processes were considered in \cite{CaCu98,HaSt19}, where $L^1(\mu)$ and $L^2(\mu)$ were adopted as the state spaces. However, in their settings, since the diffusion coefficient $1\notin L^1(\mu)\cup L^2(\mu)$ in general, one cannot use the usual framework, for example \cite{DaPrZa96,DaPrZa14}, of the infinite dimensional stochastic calculus.

In our framework, the process $Y$ is an Ornstein--Uhlenbeck process on the Hilbert space $\cH_\mu$. The associated transition semigroup $\{P_t\}_{t\geq0}$ is an Ornstein--Uhlenbeck semigroup on $\Bb$, and the transition probability $P_t(y,A):=P_t\1_A(y)$, $A\in\cB(\cH_\mu)$, $y\in\cH_\mu$, $t\geq0$, is given by $P_t(y,\cdot)=\cN(e^{-\cdot t}y,Q_t)$. Here, $\cN(e^{-\cdot t}y,Q_t)$ denotes a Gaussian measure on $\cH_\mu$ with the mean $e^{-\cdot t}y\in\cH_\mu$ and the covariance operator $Q_t:\cH_\mu\to\cH_\mu$ given by
\begin{equation*}
	Q_t:=\int^t_0e^{-\cdot s}\iota\iota^*(e^{-\cdot s})^*\,\diff s,
\end{equation*}
where $\iota:\bR\hookrightarrow\cH_\mu$ denotes the inclusion operator. It is easy to see that $e^{-\cdot t}:\cH_\mu\to\cH_\mu$ is self-adjoint and that $\iota^*y=\int_\supp r(\theta)y(\theta)\dmu$ for $y\in\cH_\mu$, and hence
\begin{equation*}
	Q_ty=\int^t_0e^{-\cdot s}\int_\supp r(\theta)e^{-\theta s}y(\theta)\dmu\,\diff s=\int^t_0(e^{-\cdot s}1)\otimes(e^{-\cdot s}1)(y)\,\diff s
\end{equation*}
for each $y\in\cH_\mu$. Here, for $y_1,y_2\in\cH_\mu$, $y_1\otimes y_2:\cH_\mu\to\cH_\mu$ denotes the tensor product.

In the rest of this section, we characterize the invariant probability measure and discuss the strong Feller property for the transition semigroup $\{P_t\}_{t\geq0}$ associated with the lifted SEE \eqref{Gauss_eq_SEE-OU}. We can apply the general theory of \cite{DaPrZa96,DaPrZa14} to our framework.

%%%%%%%%%%%%%%%%%
%% Subsection
%%%%%%%%%%%%%%%%%

\subsection{Characterization of the invariant probability measure}

We characterize the existence and uniqueness of the invariant probability measure of the transition semigroup $\{P_t\}_{t\geq0}$. Recall that a probability measure $\pi$ on $\cH_\mu$ is called an \emph{invariant probability measure} of $\{P_t\}_{t\geq0}$ if
\begin{equation*}
	\int_{\cH_\mu}P_t(y,A)\,\pi(\diff y)=\pi(A)
\end{equation*}
for any $A\in\cB(\cH_\mu)$ and any $t\geq0$. By means of the general theory on the ergodicity of infinite dimensional Ornstein--Uhlenbeck processes (cf.\ \cite[Section 11.3]{DaPrZa14}), we get the following results.

%% Theorem

\begin{theo}\label{Gauss_theo_IPM}
Let \cref{lift_assum_kernel} hold.
\begin{itemize}
\item[(i)]
If both $\mu(\{0\})=0$ and $\int_{\supp\cap(0,1]}\theta^{-1}\dmu<\infty$ hold (or equivalently if $\int^\infty_1K(t)\,\diff t<\infty$ holds), then there exists a unique invariant probability measure of the transition semigroup $\{P_t\}_{t\geq0}$ associated with the lifted SEE \eqref{Gauss_eq_SEE-OU}. The invariant probability measure is a centred Gaussian measure $\cN(0,Q)$ with the covariance operator $Q:\cH_\mu\to\cH_\mu$ given by
\begin{equation*}
	(Qy)(\theta)=\int_\supp\frac{r(\theta')}{\theta+\theta'}y(\theta')\dmud,\ \theta\in\supp,\ y\in\cH_\mu.
\end{equation*}
Furthermore, it holds that
\begin{equation*}
	\lim_{t\to\infty}P_t(y,\cdot)=\cN(0,Q)\ \text{weakly on $\cH_\mu$}
\end{equation*}
for any $y\in\cH_\mu$.
\item[(ii)]
If either $\mu(\{0\})>0$ or $\int_{\supp\cap(0,1]}\theta^{-1}\dmu=\infty$ holds (or equivalently if $\int^\infty_1K(t)\,\diff t=\infty$ holds), then the transition semigroup $\{P_t\}_{t\geq0}$ associated with the lifted SEE \eqref{Gauss_eq_SEE-OU} does not have invariant probability measures.
\end{itemize}
\end{theo}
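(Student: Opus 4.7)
The plan is to exploit the fact that the lifted SEE \eqref{Gauss_eq_SEE-OU} is a linear Ornstein--Uhlenbeck equation on $\cH_\mu$ with additive Gaussian noise, so that $P_t(y,\cdot)=\cN(e^{-\cdot t}y,Q_t)$ is Gaussian and the whole theorem reduces to analysing when the long-time covariance $Q_\infty:=\lim_{t\to\infty}Q_t$ is trace class on $\cH_\mu$. First, using the formula for $Q_t$ displayed in the excerpt together with Tonelli's theorem, I would derive
\begin{equation*}
(Q_t y)(\theta)=\int_\supp r(\theta')\,\frac{1-e^{-(\theta+\theta')t}}{\theta+\theta'}\,y(\theta')\dmud,\qquad\mathrm{tr}(Q_t)=\int_\supp r(\theta)\,\frac{1-e^{-2\theta t}}{2\theta}\dmu,
\end{equation*}
with the convention that the integrand equals $t$ when $\theta=0$. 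Since the standing assumption $1\wedge\theta^{-1/2}\le r(\theta)\le 1$ forces $r\equiv 1$ on $[0,1]$, monotone convergence yields $\sup_{t\ge 0}\mathrm{tr}(Q_t)<\infty$ if and only if $\mu(\{0\})=0$ and $\int_{\supp\cap(0,1]}\theta^{-1}\dmu<\infty$ (the tail on $(1,\infty)$ contributing at most $\tfrac12\int_\supp r\dmu<\infty$). The equivalent characterisation via $\int^\infty_1 K(t)\,\diff t<\infty$ follows from the parallel Fubini computation $\int^\infty_1 K(t)\,\diff t=\mu(\{0\})\cdot\infty+\int_{(0,\infty)}\theta^{-1}e^{-\theta}\dmu$, since the latter integral is comparable to $\int_{(0,1]}\theta^{-1}\dmu$ up to finite constants.

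For part (i), $Q:=Q_\infty$ is trace class with the kernel given in the statement (obtained by letting $t\to\infty$ in the formula for $Q_t y$ by monotone convergence), so that $\cN(0,Q)$ is a bona fide centred Gaussian probability measure on $\cH_\mu$. Its invariance is immediate from the change-of-variable identity $Q=e^{-\cdot t}Q(e^{-\cdot t})^*+Q_t$, itself obtained by the substitution $s\mapsto s+t$ in the tail integral $\int^\infty_t(e^{-\cdot s}1)\otimes(e^{-\cdot s}1)\,\diff s=Q-Q_t$. For the weak convergence $P_t(y,\cdot)\to\cN(0,Q)$: since $\mu(\{0\})=0$, dominated convergence gives $\|e^{-\cdot t}y\|_{\cH_\mu}\to 0$ for every $y\in\cH_\mu$, while monotone convergence gives $\mathrm{tr}(Q-Q_t)\to 0$, so the Gaussian measures $\cN(e^{-\cdot t}y,Q_t)$ converge to $\cN(0,Q)$ in total variation, a fortiori weakly. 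Uniqueness uses the Fourier invariance identity
\begin{equation*}
\hat\pi(\xi)=\hat\pi(e^{-\cdot t}\xi)\exp\Big(-\tfrac12\langle Q_t\xi,\xi\rangle_{\cH_\mu}\Big),\qquad \xi\in\cH_\mu,\ t\ge 0,
\end{equation*}
deduced from $\pi=P^*_t\pi$ and the self-adjointness of $e^{-\cdot t}$ on $\cH_\mu$: letting $t\to\infty$ and using continuity of $\hat\pi$ at $0$ identifies $\hat\pi(\xi)=\exp(-\tfrac12\langle Q\xi,\xi\rangle)$, and hence $\pi=\cN(0,Q)$.

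For part (ii), I would assume for contradiction that an invariant probability measure $\pi$ exists and split on the two cases. If $\mu(\{0\})>0$, then the closed subspace $\cH_0\subset\cH_\mu$ of elements supported on $\{0\}$ is a non-trivial fixed subspace of $e^{-\cdot t}$ and the marginal process on $\cH_0$ is essentially a one-dimensional Brownian motion. Applying the Fourier identity above to $\xi\in\cH_0\setminus\{0\}$ gives $\hat\pi(\xi)\bigl(1-\exp(-\tfrac12 t\,\mu(\{0\})^2|\xi(0)|^2)\bigr)=0$ for every $t$, forcing $\hat\pi\equiv 0$ on $\cH_0\setminus\{0\}$ and contradicting $\hat\pi(0)=1$ via continuity. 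In the remaining sub-case $\mu(\{0\})=0$ and $\int_{\supp\cap(0,1]}\theta^{-1}\dmu=\infty$, $e^{-\cdot t}\xi\to 0$ in $\cH_\mu$ for every $\xi$, and the Fourier identity together with continuity of $\hat\pi$ at $0$ forces $\hat\pi(\xi)=\exp(-\tfrac12\langle Q_\infty\xi,\xi\rangle_{\cH_\mu})$ for every $\xi\in\cH_\mu$; this identifies $\hat\pi$ as the cylindrical Gaussian characteristic functional with covariance $Q_\infty$, but $Q_\infty$ is not trace class on $\cH_\mu$, so by the Minlos--Sazonov characterisation of Radon Gaussian measures this functional cannot be the characteristic functional of a Radon probability measure on $\cH_\mu$, a contradiction.

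I anticipate the main obstacle to be the second sub-case of part (ii): the direct pointwise bound $|\hat\pi(\xi)|\le\exp(-\tfrac12\langle Q_t\xi,\xi\rangle_{\cH_\mu})$ alone is not enough, because $Q_\infty$ can perfectly well be bounded---indeed satisfy $\langle Q_\infty\xi,\xi\rangle<\infty$ for every single $\xi$---while failing to be trace class, and then no single $\xi$ exhibits a contradiction. One therefore genuinely has to pass to the limit in the full Fourier identity, identify $\hat\pi$ with the cylindrical Gaussian characteristic functional, and then invoke Minlos--Sazonov (equivalently, the classical fact that a countably additive Gaussian measure on a separable Hilbert space necessarily has trace-class covariance).
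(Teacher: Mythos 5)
Your trace computation and the equivalence with $\int_1^\infty K(t)\,\diff t<\infty$ match the paper's, but from that point on you take a genuinely different route. The paper disposes of everything by citing the general theory of infinite-dimensional Ornstein--Uhlenbeck semigroups: \cite[Theorem 11.17]{DaPrZa14} gives ``invariant measure exists iff $\sup_t\mathrm{Tr}\,Q_t<\infty$'' (which settles part (ii) in one line), and \cite[Theorem 11.20]{DaPrZa14} gives uniqueness, the Gaussian form of the invariant measure, and the weak convergence once $\|e^{-\cdot t}y\|_{\cH_\mu}\to0$ is checked. You instead reprove the relevant pieces of that machinery by hand: invariance from the covariance recursion $Q=e^{-\cdot t}Q(e^{-\cdot t})^*+Q_t$, uniqueness and non-existence from the Fourier identity $\hat\pi(\xi)=\hat\pi(e^{-\cdot t}\xi)\exp(-\tfrac12\langle Q_t\xi,\xi\rangle_{\cH_\mu})$, with the $\mu(\{0\})>0$ case handled by the fixed vector $\1_{\{0\}}$ and the remaining case by the trace-class characterisation of Gaussian covariances on a separable Hilbert space. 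This is more self-contained and makes the mechanism of part (ii) visible (the paper's part (ii) is entirely outsourced to the cited theorem); the paper's version is shorter. Your anticipated ``main obstacle'' in part (ii) is real and your resolution of it --- passing to the limit in the full Fourier identity and then invoking the fact that a countably additive Gaussian measure on a separable Hilbert space has trace-class covariance --- is exactly the right fix; one must also note that if $\lim_t\langle Q_t\xi,\xi\rangle=\infty$ for some $\xi$ then $\hat\pi(s\xi)=0$ for all $s\neq0$, contradicting continuity at $0$, which you implicitly cover.

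The one genuine misstep is the claim that $\cN(e^{-\cdot t}y,Q_t)\to\cN(0,Q)$ \emph{in total variation}. By the Cameron--Martin and Feldman--H\'ajek theorems these measures can be mutually singular for every $t$ (the shift $e^{-\cdot t}y$ need not lie in the Cameron--Martin space, and $Q_t\neq Q$), in which case the total variation distance is identically $2$ and does not converge. This does not damage the theorem: weak convergence follows from the standard criterion for Gaussian measures on a separable Hilbert space (means converge in norm, covariances converge weakly as operators, and $\mathrm{Tr}\,Q_t\to\mathrm{Tr}\,Q$), or equivalently from pointwise convergence of the characteristic functionals together with the tightness supplied by $\sup_t\mathrm{Tr}\,Q_t<\infty$. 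Replace the total variation assertion by that argument and the proof is complete.
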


%% Proof

\begin{proof}
By \cite[Theorem 11.17]{DaPrZa14}, there exists an invariant probability measure of $\{P_t\}_{t\geq0}$ if and only if $\sup_{t\in[0,\infty)}\mathrm{Tr}\,[Q_t]<\infty$, where $\mathrm{Tr}\,[Q_t]$ denotes the trace of the operator $Q_t:\cH_\mu\to\cH_\mu$. Observe that
\begin{equation*}
	\mathrm{Tr}\,[Q_t]=\int^t_0\|e^{-\cdot s}1\|^2_{\cH_\mu}\,\diff s=\int^t_0\int_\supp r(\theta)e^{-2\theta s}\dmu\,\diff s=t\mu(\{0\})+\int_{\supp\setminus\{0\}}\frac{1-e^{-2\theta t}}{2\theta}r(\theta)\dmu.
\end{equation*}
By the monotone convergence theorem, we have
\begin{equation*}
	\sup_{t\in[0,\infty)}\int_{\supp\setminus\{0\}}\frac{1-e^{-2\theta t}}{2\theta}r(\theta)\dmu=\lim_{t\to\infty}\int_{\supp\setminus\{0\}}\frac{1-e^{-2\theta t}}{2\theta}r(\theta)\dmu=\int_{\supp\setminus\{0\}}\frac{r(\theta)}{2\theta}\dmu.
\end{equation*}
Therefore, $\sup_{t\in[0,\infty)}\mathrm{Tr}\,[Q_t]<\infty$ if and only if both $\mu(\{0\})=0$ and $\int_{\supp\cap(0,1]}\theta^{-1}\dmu<\infty$ hold. It is easy to see that the latter is equivalent to $\int^\infty_1K(t)\,\diff t<\infty$.

Suppose that $\sup_{t\in[0,\infty)}\mathrm{Tr}\,[Q_t]<\infty$. Noting that $\mu(\{0\})=0$, by the dominated convergence theorem, we have
\begin{equation*}
	\lim_{t\to\infty}\|e^{-\cdot t}y\|_{\cH_\mu}=\lim_{t\to\infty}\Big(\int_\supp r(\theta)e^{-2\theta t}|y(\theta)|^2\dmu\Big)^{1/2}=0
\end{equation*}
for any $y\in\cH_\mu$. Therefore, by \cite[Theorems 11.17 and 11.20]{DaPrZa14}, the invariant probability measure of $\{P_t\}_{t\geq0}$ is unique, it is a centred Gaussian measure $\cN(0,Q)$ with the covariance operator $Q:\cH_\mu\to\cH_\mu$ given by
\begin{align*}
	(Qy)(\theta)&=\int^\infty_0\big(e^{-\cdot t}\iota\iota^*(e^{-\cdot t})^*y\big)(\theta)\,\diff t\\
	&=\int^\infty_0e^{-\theta t}\int_\supp r(\theta')e^{-\theta' t}y(\theta')\dmud\,\diff t\\
	&=\int_\supp\frac{r(\theta')}{\theta+\theta'}y(\theta')\dmud,\ \theta\in\supp,\ y\in\cH_\mu,
\end{align*}
and it holds that $\lim_{t\to\infty}P_t(y,\cdot)=\cN(0,Q)$ weakly on $\cH_\mu$ for any $y\in\cH_\mu$. This completes the proof.
\end{proof}

%% Remark

\begin{rem}
On the one hand, if $K=K_\mathrm{frac}$ is the fractional kernel, then $\int^\infty_1K(t)\,\diff t=\infty$, and thus the transition semigroup $\{P_t\}_{t\geq0}$ associated with the lifted SEE \eqref{Gauss_eq_SEE-OU} does not have invariant probability measures. On the other hand, if $K=K_\mathrm{gamma}$ is the Gamma kernel, then the associated transition semigroup has a unique invariant probability measure; see \cref{lift_exam_kernel}. The above result indicates that the existence and uniqueness of the invariant probability measure is more about the asymptotic behaviour of the kernel $K(t)$ as $t\to\infty$ than it is about the regularity/singularity at $t=0$.
\end{rem}

Suppose that both $\mu(\{0\})=0$ and $\int_{\supp\cap(0,1]}\theta^{-1}\dmu<\infty$ hold, and let $\pi=\cN(0,Q)$, which is the invariant probability measure of the transition semigroup $\{P_t\}_{t\geq0}$ associated with the lifted SEE \eqref{Gauss_eq_SEE-OU}. Since the mild solution $Y=Y^y$ of the lifted SEE \eqref{Gauss_eq_SEE-OU} with a given initial condition $y\in\cH_\mu$ satisfies $Y^y_t\in\cV_\mu$ a.s.\ for a.e.\ $t>0$, we have $P_t(y,\cV_\mu)=1$ for a.e.\ $t>0$ for any $y\in\cH_\mu$, and hence $\pi(\cV_\mu)=1$. Furthermore, by the Cauchy--Schwarz inequality, we have
\begin{align*}
	\|Qy\|^2_{\cV_\mu}&=\int_\supp(\theta+1)r(\theta)\Big|\int_\supp\frac{r(\theta')}{\theta+\theta'}y(\theta')\dmud\Big|^2\dmu\\
	&\leq\int_\supp(\theta+1)r(\theta)\int_\supp\frac{\theta'+1}{(\theta+\theta')^2}r(\theta')\dmud\,\int_\supp(\theta'+1)^{-1}r(\theta')|y(\theta')|^2\dmud\dmu\\
	&=\int_\supp\int_\supp\frac{(\theta+1)(\theta'+1)}{(\theta+\theta')^2}r(\theta)r(\theta')\dmu\dmud\,\|y\|^2_{\cV^*_\mu}
\end{align*}
for any $y\in\cH_\mu$. Noting that
\begin{align*}
	&\int_\supp\int_\supp\frac{(\theta+1)(\theta'+1)}{(\theta+\theta')^2}r(\theta)r(\theta')\dmu\dmud\\
	&\leq\int_\supp\int_\supp\Big(1+\frac{1}{\theta}\Big)\Big(1+\frac{1}{\theta'}\Big)r(\theta)r(\theta')\dmu\dmud\\
	&=\Big(\int_\supp\Big(1+\frac{1}{\theta}\Big)r(\theta)\dmu\Big)^2<\infty,
\end{align*}
we see that $Q(\cH_\mu)\subset\cV_\mu$, and $Q:\cH_\mu\to\cV_\mu$ can be extended to a bounded linear operator $Q:\cV^*_\mu\to\cV_\mu$. Thus, for any $y'\in\cH_\mu$,
\begin{align*}
	\int_{\cV_\mu}e^{\sqrt{-1}\dual{y'}{y}}\,\pi(\diff y)&=\int_{\cV_\mu}e^{\sqrt{-1}\langle y',y\rangle_{\cH_\mu}}\,\pi(\diff y)\\
	&=\int_{\cH_\mu}e^{\sqrt{-1}\langle y',y\rangle_{\cH_\mu}}\,\pi(\diff y)\\
	&=e^{-\frac{1}{2}\langle y',Qy'\rangle_{\cH_\mu}}\\
	&=e^{-\frac{1}{2}\dual{y'}{Qy'}}.
\end{align*}
Since $\cH_\mu$ is dense in $\cV^*_\mu$, by using the dominated convergence theorem and the continuity of $Q:\cV^*_\mu\to\cV_\mu$, we see that
\begin{equation*}
	\int_{\cV_\mu}e^{\sqrt{-1}\dual{y'}{y}}\,\pi(\diff y)=e^{-\frac{1}{2}\dual{y'}{Qy'}}
\end{equation*}
holds for any $y'\in\cV^*_\mu$. Therefore, $\pi=\cN(0,Q)$ can be seen as a centred Gaussian measure on $\cV_\mu$ with the covariance operator $Q:\cV^*_\mu\to\cV_\mu$.

By means of the invariant probability measure $\pi$ of the lifted SEE \eqref{Gauss_eq_SEE-OU}, we can construct a ``stationary solution'' $X$ of the SVIE \eqref{Gauss_eq_SVIE-OU}. Let $\eta:\Omega\to\cH_\mu$ be an initial condition (independent of $W$) with the law $\pi=\cN(0,Q)$, and let $\xi(t)=(\cK\eta)(t)=\mu[e^{-\cdot t}\eta]$, $t\geq0$. Recall that, for any $t>0$, $e^{-\cdot t}$ is a bounded linear operator from $\cH_\mu$ to $\cV_\mu$, and hence $\xi(t)=\mu_0[e^{-\cdot t}\eta]$. It is easy to see that the adjoint operator $\mu^*_0:\bR\to\cV^*_\mu$ of the integral operator $\mu_0:\cV_\mu\to\bR$ is given by $\mu^*_0[1](\theta)=r(\theta)^{-1}$, $\theta\in\supp$. Thus, the law of $\xi(t)$ (on $\bR$) is a centred Gaussian measure with the variance
\begin{equation*}
	\mu_0e^{-\cdot t}Q(\mu_0e^{-\cdot t})^*=\int_\supp\int_\supp\frac{e^{-(\theta+\theta')t}}{\theta+\theta'}\dmu\dmud=\int^\infty_0K(t+s)^2\,\diff s.
\end{equation*}
Let $\bar{Y}$ be the mild solution of the lifted SEE \eqref{Gauss_eq_SEE-OU} with the initial condition $\eta$, and consider the Volterra process
\begin{equation*}
	\bar{X}_t=\xi(t)+\int^t_0K(t-s)\,\diff W_s,\ t>0.
\end{equation*}
Then we have $\bar{Y}_t\in\cV_\mu$ and $\bar{X}_t=\mu[\bar{Y}_t]=\mu_0[\bar{Y}_t]$ a.s.\ for a.e.\ $t>0$. Thus, for a.e.\ $t>0$, the law of $\bar{X}_t$ (on $\bR$) is a centred Gaussian measure with the variance
\begin{equation*}
	\mu_0Q\mu^*_0=\int_\supp\int_\supp\frac{1}{\theta+\theta'}\dmu\dmud=\int^\infty_0K(s)^2\,\diff s,
\end{equation*}
which does not depend on $t$.

\subsection{Failure of the strong Feller property}

Recall that a Markov transition semigroup $\{P_t\}_{t\geq0}$ is said to be \emph{strongly Feller} at time $t>0$ if $P_tf$ is continuous for any $\bR$-valued bounded measurable function $f$. It is well-known that if the noise is nondegenerate (i.e.\ the diffusion is elliptic in some sense), the strong Feller property can be obtained as a consequence of the Bismut--Elworthy--Li formula (cf.\ \cite[Section 9.4]{DaPrZa14}) or the log-Harnack inequality (cf.\ \cite{Wa13}). However, since the noise of the lifted SEE \eqref{Gauss_eq_SEE-OU} is degenerate (i.e.\ $\iota\iota^*:\cH_\mu\to\cH_\mu$ is not elliptic), there is no reason that the strong Feller property holds for the associated transition semigroup $\{P_t\}_{t\geq0}$. Indeed, the following theorem shows that, unfortunately, the strong Feller property does not hold in a typical infinite dimensional situation.

%% Theorem

\begin{theo}\label{Gauss_theo_strong-Feller}
Let \cref{lift_assum_kernel} hold. Assume that there exists a Borel set $B\subset\supp$ such that $\mu(B)>0$ and $\mu$ is atomless on $B$. Then the transition semigroup $\{P_t\}_{t\geq0}$ associated with the lifted SEE \eqref{Gauss_eq_SEE-OU} is not strongly Feller. More precisely, for any $t>0$, there exists $f\in\Bb$ such that $y\mapsto P_tf(y)$ is not continuous on $\cH_\mu$.
\end{theo}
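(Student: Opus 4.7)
The plan is to invoke the Feldman--Hajek dichotomy for Gaussian measures on $\cH_\mu$ with a common covariance: two such measures with means $m_1,m_2$ are either mutually equivalent (when $m_1-m_2\in Q_t^{1/2}(\cH_\mu)$) or mutually singular. Since $P_t(y,\cdot)=\cN(e^{-\cdot t}y,Q_t)$, I plan to construct a sequence $y_k\to 0$ in $\cH_\mu$ such that the family $\{P_t(y_k,\cdot)\}_{k\geq 0}$ (setting $y_0=0$) is \emph{pairwise} mutually singular; a suitably chosen indicator function will then separate them.

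The analytic input is a characterization of the Cameron--Martin space. Writing $Q_t=LL^*$ with $L\colon L^2(0,t;\bR)\to\cH_\mu$ defined by $(L\varphi)(\theta)=\int_0^t e^{-\theta(t-s)}\varphi_s\,\diff s$, a standard fact (e.g.\ \cite[Appendix B]{DaPrZa14}) gives $Q_t^{1/2}(\cH_\mu)=L(L^2(0,t;\bR))$. Every element of $Q_t^{1/2}(\cH_\mu)$ therefore admits a representative $\theta\mapsto(L\varphi)(\theta)$ that extends to an entire function of $\theta$; hence if such a representative vanishes on a Borel subset of $\supp$ having an accumulation point in $(0,\infty)$, it must vanish identically.

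For the construction, I first shrink $B$ so that $B\subset[\delta,M]$ for some $0<\delta<M<\infty$, which is possible because $\mu$ is atomless on $B$ and $\mu(B)=\lim_n\mu(B\cap[1/n,n])>0$. Using atomlessness again, I partition $B=\bigsqcup_{k=1}^\infty B_k$ into Borel subsets of positive $\mu$-measure, and set $y_k:=\alpha_k\mathbf{1}_{B_k}\in\cH_\mu$ with $\alpha_k>0$ so small that $y_k\to 0$ in $\cH_\mu$. The key verification is $e^{-\cdot t}(y_i-y_j)\notin Q_t^{1/2}(\cH_\mu)$ for every $i\ne j$ (taking $y_0=0$): otherwise some entire function $F=L\varphi$ would agree $\mu$-a.e.\ on $\supp$ with $e^{-\theta t}(\alpha_i\mathbf{1}_{B_i}(\theta)-\alpha_j\mathbf{1}_{B_j}(\theta))$; since the partition is countably infinite I can pick $\ell$ distinct from $i,j$, and then $F\equiv 0$ $\mu$-a.e.\ on $B_\ell$. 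By atomlessness this zero set is uncountable inside the compact $[\delta,M]$, giving an accumulation point in $(0,\infty)$, so $F\equiv 0$, which contradicts $F\ne 0$ $\mu$-a.e.\ on $B_i\cup B_j$.

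To conclude, Feldman--Hajek yields pairwise mutual singularity of the Gaussian measures $\nu_k:=P_t(y_k,\cdot)$, and a standard diagonal argument (for each pair $k\ne j$ pick $S_{kj}\in\cB(\cH_\mu)$ with $\nu_k(S_{kj})=1$, $\nu_j(S_{kj})=0$; then set $C_k:=\bigcap_{j\ne k}S_{kj}$ and $A_k:=C_k\setminus\bigcup_{j<k}C_j$) produces pairwise disjoint Borel sets $A_k\subset\cH_\mu$ with $\nu_k(A_k)=1$. Putting $f:=\mathbf{1}_A\in\Bb$ with $A:=\bigcup_{k\text{ odd}}A_k$ gives $P_tf(y_k)=1$ for every odd $k\ge 1$ and $P_tf(0)=0$, proving that $y\mapsto P_tf(y)$ is not continuous at $0$. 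The step I expect to be most delicate is the analytic argument of paragraph~3: it rests on the compatibility between the entire-function structure of Cameron--Martin representatives and the measure-theoretic atomlessness, with the reduction $B\subset[\delta,M]$ essential to place the accumulation point strictly inside the domain of analyticity.
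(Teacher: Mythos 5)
Your proof is correct, but it takes a genuinely different route from the paper's. The paper does not construct the discontinuous function directly: it invokes the characterization of the strong Feller property for Ornstein--Uhlenbeck semigroups from \cite[Theorem 7.2.1]{DaPrZa96} ($\{P_t\}$ is strongly Feller at time $t$ if and only if $e^{-\cdot t}(\cH_\mu)\subset Q_t^{1/2}(\cH_\mu)$), reduces this range inclusion to the norm bound $\|e^{-\cdot t}y\|_{\cH_\mu}\leq k\|Q_t^{1/2}y\|_{\cH_\mu}$ via \cite[Proposition B.1]{DaPrZa14}, and then defeats that bound quantitatively: atomlessness furnishes indicators $y_\ep=\1_{B_\ep}$ with $\int_{B_\ep}r(\theta)\dmu$ arbitrarily small, and a Cauchy--Schwarz estimate on $\langle Q_ty_\ep,y_\ep\rangle_{\cH_\mu}$ yields $\|Q_t^{1/2}y_\ep\|_{\cH_\mu}<\ep\|e^{-\cdot t}y_\ep\|_{\cH_\mu}$. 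You instead identify the Cameron--Martin space as $L(L^2(0,t;\bR))$, use analyticity of finite Laplace transforms together with the identity theorem to show $e^{-\cdot t}(y_i-y_j)\notin Q_t^{1/2}(\cH_\mu)$, and conclude mutual singularity of the transition kernels from the Feldman--Hajek/Cameron--Martin dichotomy, assembling an explicit indicator $f$ with $P_tf$ discontinuous at $0$. Your argument buys an explicit witness for the ``more precisely'' clause and uses atomlessness qualitatively (uncountable zero sets with accumulation points, infinite partitions of $B$), whereas the paper's uses it quantitatively (subsets of arbitrarily small positive measure) and is shorter, at the price of leaning on the abstract equivalence. Two minor simplifications to yours: since $L\varphi$ is entire on all of $\bC$, the accumulation point need not lie in $(0,\infty)$ (indeed any uncountable subset of $\bR$ has an accumulation point, so the reduction $B\subset[\delta,M]$ is dispensable); and to build $f$ you only need each $\nu_k$ to be singular with respect to $\nu_0$ (take $A=\bigcup_k S_k$ with $\nu_k(S_k)=1$, $\nu_0(S_k)=0$), not pairwise singularity of the whole family.
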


%% Proof

\begin{proof}
Let $t>0$ be fixed. By \cite[Theorem 7.2.1]{DaPrZa96}, $\{P_t\}_{t\geq0}$ is strongly Feller at time $t$ if and only if
\begin{equation}\label{Gauss_eq_strong-Feller}
	e^{-\cdot t}(\cH_\mu)\subset Q^{1/2}_t(\cH_\mu).
\end{equation}
Furthermore, noting that the operators $e^{-\cdot t}:\cH_\mu\to\cH_\mu$ and $Q^{1/2}_t:\cH_\mu\to\cH_\mu$ are self adjoint, by \cite[Proposition B.1]{DaPrZa14}, \eqref{Gauss_eq_strong-Feller} holds if and only if there exists a constant $k>0$ such that $\|e^{-\cdot t}y\|_{\cH_\mu}\leq k\|Q^{1/2}_ty\|_{\cH_\mu}$ for any $y\in\cH_\mu$. Therefore, to prove our assertion, it suffices to show that, for any $\ep>0$, there exists $y_\ep\in\cH_\mu$ such that $\|Q^{1/2}_ty_\ep\|_{\cH_\mu}<\ep\|e^{-\cdot t}y_\ep\|_{\cH_\mu}$.

Let $M\in(0,\infty)$ be such that $\mu(B\cap[0,M])>0$. Fix an arbitrary number $\ep>0$. By the assumption, there exists a Borel set $B_\ep\subset B\cap[0,M]$ such that
\begin{equation*}
	0<\int_{B_\ep}r(\theta)\dmu<\frac{2M\ep^2}{e^{2Mt}-1}.
\end{equation*}
Take $y_\ep:=\1_{B_\ep}\in\cH_\mu$. Note that
\begin{equation*}
	\|e^{-\cdot t}y_\ep\|_{\cH_\mu}=\Big(\int_{B_\ep}r(\theta)e^{-2\theta t}\dmu\Big)^{1/2}>0.
\end{equation*}
By the Cauchy--Schwarz inequality, we have
\begin{align*}
	\|Q^{1/2}_ty_\ep\|^2_{\cH_\mu}&=\langle Q_ty_\ep,y_\ep\rangle_{\cH_\mu}\\
	&=\int^t_0\Big|\int_\supp r(\theta)e^{-\theta s}y_\ep(\theta)\dmu\Big|^2\,\diff s\\
	&=\int^t_0\Big|\int_{B_\ep}r(\theta)e^{-\theta s}\dmu\Big|^2\,\diff s\\
	&\leq\int^t_0\int_{B_\ep}r(\theta)\dmu\,\int_{B_\ep}r(\theta)e^{-2\theta s}\dmu\,\diff s\\
	&\leq\int^t_0\int_{B_\ep}r(\theta)\dmu\,\int_{B_\ep}r(\theta)e^{-2\theta t}e^{2M(t-s)}\dmu\,\diff s\\
	&=\frac{e^{2Mt}-1}{2M}\int_{B_\ep} r(\theta)\dmu\,\|e^{-\cdot t}y_\ep\|^2_{\cH_\mu}\\
	&<\ep^2\|e^{-\cdot t}y_\ep\|^2_{\cH_\mu},
\end{align*}
and hence $\|Q^{1/2}_ty_\ep\|_{\cH_\mu}<\ep\|e^{-\cdot t}y_\ep\|_{\cH_\mu}$. This completes the proof.
\end{proof}

%% Remark

\begin{rem}
If $K$ is either the fractional kernel $K_\mathrm{frac}$ or the Gamma kernel $K_\mathrm{gamma}$, then the corresponding Radon measure $\mu$ is diffusive, and thus the associated transition semigroup of the lifted SEE \eqref{Gauss_eq_SEE-OU} is not strongly Feller; see \cref{lift_exam_kernel}.
\end{rem}

The above negative result indicates that it is difficult to capture the ``smoothing effect'' even in the simple example. The difficulty comes from the fact that the lifted SEE \eqref{Gauss_eq_SEE-OU} is \emph{highly degenerate}; the state space $\cH_\mu$ is infinite dimensional, while the noise (Brownian motion) is one dimensional. Nevertheless, we can show a weaker version of the strong Feller property for general lifted SEEs (under suitable assumptions). This can be recovered by means of the
\emph{asymptotic log-Harnack inequality}, which is our main topic in the next section.

%%%%%%%%%%%%%%%%%%%%%%%%%%%%%%%%%%
%%%%%%%%%%%%%%%%%%%%%%%%%%%%%%%%%%
%% Section
%%%%%%%%%%%%%%%%%%%%%%%%%%%%%%%%%%
%%%%%%%%%%%%%%%%%%%%%%%%%%%%%%%%%%

\section{Asymptotic log-Harnack inequality}\label{Harnack}

In this section, we investigate asymptotic properties of the transition semigroup $\{P_t\}_{t\geq0}$ associated with the (general) lifted SEE \eqref{lift_eq_SEE} defined in \cref{lift_theo_Markov}. In particular, we derive an \emph{asymptotic log-Harnack inequality} for $\{P_t\}_{t\geq0}$ and show some important consequences.

We impose the following assumptions on the maps $b:\bR^n\to\bR^n$ and $\sigma:\bR^n\to\bR^{n\times d}$.

%% Assumption

\begin{assum}\label{Harnack_assum_coefficients}
There exists a constant $L>0$ such that
\begin{equation*}
	|b(x)-b(\bar{x})|+|\sigma(x)-\sigma(\bar{x})|\leq L|x-\bar{x}|
\end{equation*}
for any $x,\bar{x}\in\bR^n$. Furthermore, $\|\sigma\|_\infty:=\sup_{x\in\bR^n}|\sigma(x)|<\infty$, and for any $x\in\bR^n$, the matrix $\sigma(x)\in\bR^{n\times d}$ has a right inverse $\sigma^{-1}(x)\in\bR^{d\times n}$ such that the map $x\mapsto\sigma^{-1}(x)$ is measurable and $\|\sigma^{-1}\|_\infty:=\sup_{x\in\bR^n}|\sigma^{-1}(x)|<\infty$.
\end{assum}

%% Remark

\begin{rem}
We note that the above assumption requires that the dimension $d$ of Brownian motion $W$ is greater than or equal to the dimension $n$ of solutions of SVIEs, and the matrix $\sigma(x)$ is of rank $n$ for any $x\in\bR^n$.
\end{rem}

In the following, for a function $f:\cH_\mu\to\bR$, we use the notations:
\begin{equation*}
	|\nabla f|(y):=\limsup_{\bar{y}\to y}\frac{|f(y)-f(\bar{y})|}{\|y-\bar{y}\|_{\cH_\mu}},\ y\in\cH_\mu,
\end{equation*}
and $\|\nabla f\|_\infty:=\sup_{y\in\cH_\mu}|\nabla f|(y)$.

%%%%%%%%%%%%%%%%%
%% Subsection
%%%%%%%%%%%%%%%%%

\subsection{Main result and applications}

The following is the main result of this section, which is proved in the next subsection.

%% Theorem

\begin{theo}\label{Harnack_theo_Harnack}
Let \cref{lift_assum_kernel} and \cref{Harnack_assum_coefficients} hold. Then the following \emph{asymptotic log-Harnack inequality} holds:
\begin{equation}\label{Harnack_eq_Harnack}
\begin{split}
	P_t\log f(\bar{y})&\leq\log P_tf(y)+\frac{1}{2}\|\sigma^{-1}\|^2_\infty\Big(1+2L^2\Big(1+\int_\supp r(\theta)\dmu\Big)r(m)^{-2}\Big)\|y-\bar{y}\|^2_{\cH_\mu}\\
	&\hspace{3cm}+r(m)^{-1/2}e^{-\beta t/2}\|y-\bar{y}\|_{\cH_\mu}\|\nabla\log f\|_\infty
\end{split}
\end{equation}
for any $t\geq0$, $y,\bar{y}\in\cH_\mu$ and any $f\in\Bb$ such that $f\geq1$ and $\|\nabla\log f\|_\infty<\infty$, where $\beta:=\inf\supp\geq0$, and $m\geq1$ is an arbitrary constant satisfying
\begin{equation}\label{Harnack_eq_m}
	2L^2\Big(1+\int_\supp r(\theta)\dmu\Big)\int_{\supp\cap[m,\infty)}r(\theta)\dmu\leq1.
\end{equation}
\end{theo}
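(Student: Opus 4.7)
The strategy is to perform an asymptotic coupling in the spirit of Xu~\cite{Xu11} and Bao--Wang--Yuan~\cite{BaWaYu19}, exploiting the Hilbert structure of $\cH_\mu$, the dissipative generator $A$ of the semigroup $\{e^{-\cdot t}\}_{t\geq0}$, and the fact that the noise of the lifted SEE enters only through the finite-dimensional projection $\mu[\cdot]$. Fix $y,\bar y\in\cH_\mu$ and $m\geq1$ satisfying \eqref{Harnack_eq_m}, let $Y^y$ denote the mild solution of \eqref{lift_eq_SEE} starting from $y$, and on the same filtered probability space introduce an auxiliary process $\bar Y$ as the (unique, by \cref{lift_theo_local-Lip}) mild solution of the controlled generalized SEE
\begin{equation*}
	d\bar Y_t(\theta) = -\theta\bar Y_t(\theta)\,dt + b(\mu[\bar Y_t])\,dt + \sigma(\mu[\bar Y_t])\,dW_t + u_t\,dt,\qquad \bar Y_0=\bar y,
\end{equation*}
with the $\bR^n$-valued drift control $u_t := b(\mu[Y^y_t])-b(\mu[\bar Y_t])$. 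This choice cancels the drift mismatch of the difference $Z_t:=Y^y_t-\bar Y_t$, which therefore satisfies $dZ_t(\theta) = -\theta Z_t(\theta)\,dt + \delta\sigma_t\,dW_t$ under $\bP$, with $\delta\sigma_t := \sigma(\mu[Y^y_t])-\sigma(\mu[\bar Y_t])$.

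Setting $\xi_s := \sigma^{-1}(\mu[\bar Y_s])u_s$, the bound $|\xi_s|\leq\|\sigma^{-1}\|_\infty L|\mu[Z_s]|$, the a priori estimates of \cref{lift_theo_bound}, and a standard truncation argument ensure that $R_t := \exp(-\!\int_0^t\xi_s\cdot dW_s - \tfrac12\!\int_0^t|\xi_s|^2 ds)$ is a true $\bP$-martingale. By Girsanov, $\bar W_t:=W_t+\!\int_0^t\xi_s\,ds$ is a Brownian motion under $\bar\bP:=R_t\cdot\bP$, and $\bar Y$ solves \eqref{lift_eq_SEE} driven by $\bar W$; pathwise uniqueness and Yamada--Watanabe (as used in \cref{lift_theo_Markov}) then give $\mathrm{Law}_{\bar\bP}(\bar Y)=\mathrm{Law}_\bP(Y^{\bar y})$, so $P_t\log f(\bar y) = \bE[R_t\log f(\bar Y_t)]$. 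Inserting and subtracting $\log f(Y^y_t)$, applying Young's entropy inequality $\bE[R_t g]\leq\log\bE[e^g]+\bE[R_t\log R_t]$ to $g=\log f(Y^y_t)$, using the Lipschitz bound on $\log f$, and $\bE[R_t\log R_t]=\tfrac12\bE_{\bar\bP}\!\int_0^t|\xi_s|^2 ds$, I arrive at
\begin{equation*}
	P_t\log f(\bar y) \leq \log P_tf(y) + \tfrac12\|\sigma^{-1}\|_\infty^2\,\bE_{\bar\bP}\!\!\int_0^t|u_s|^2 ds + \|\nabla\log f\|_\infty\,\bE_{\bar\bP}[\|Z_t\|_{\cH_\mu}].
\end{equation*}

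The key estimate that drives both remaining terms is the two-scale splitting at the threshold $m$: for any $Z\in\cV_\mu$,
\begin{equation*}
	|\mu[Z]|^2 \leq 2\,r(m)^{-2}\!\!\int_\supp\! r\,\dmu\,\|Z\|_{\cH_\mu}^2 + 2\!\!\int_{\supp\cap[m,\infty)}\!\!r\,\dmu\,\|Z\|_{\cV_\mu}^2,
\end{equation*}
proved by Cauchy--Schwarz on each piece, using $r(\theta)\geq r(m)$ on $[0,m)$ for the first integral and $(\theta+1)^{-1}r(\theta)^{-1}\leq r(\theta)$ from \cref{lift_assum_kernel} on $[m,\infty)$ for the second. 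Since $|u_s|\leq L|\mu[Z_s]|$ and $\|\delta\sigma_s\|^2_{L_2(\bR^d;\cH_\mu)}=|\delta\sigma_s|^2\!\int r\dmu\leq L^2|\mu[Z_s]|^2\!\int r\dmu$, applying It\^o's formula to $\|Z_t\|^2_{\cH_\mu}$ in the Gelfand triplet (with $\langle Ay,y\rangle_{\cH_\mu}=\|y\|^2_{\cH_\mu}-\|y\|^2_{\cV_\mu}$ from \cref{lift_lemm_space}~(ii)), passing to $\bar\bP$ and controlling the additional cross-term $-\delta\sigma_t\sigma^{-1}(\mu[\bar Y_t])(b(\mu[Y^y_t])-b(\mu[\bar Y_t]))$ arising from the change of measure, and substituting the splitting, the high-frequency contribution to the diffusion term produces a coefficient $2L^2\!\int r\dmu\!\int_{[m,\infty)}\!r\dmu\leq1$ (precisely by \eqref{Harnack_eq_m}) on $\|Z_t\|^2_{\cV_\mu}$, which is absorbed into the coercive term; the low-frequency contribution produces a coefficient $2L^2 r(m)^{-2}(\int r\dmu)^2$ on $\|Z_t\|^2_{\cH_\mu}$, which together with $-\!\int\theta r|Z|^2\dmu\leq-\beta\|Z_t\|^2_{\cH_\mu}$ gives, via Gronwall on $e^{\beta t}\bE_{\bar\bP}[\|Z_t\|^2_{\cH_\mu}]$, the bound $\bE_{\bar\bP}[\|Z_t\|^2_{\cH_\mu}]\leq r(m)^{-1}e^{-\beta t}\|y-\bar y\|^2_{\cH_\mu}$. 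Jensen's inequality then produces the $r(m)^{-1/2}e^{-\beta t/2}\|y-\bar y\|_{\cH_\mu}$ factor in the $\|\nabla\log f\|_\infty$-term, while integrating the same splitting in $s$ converts the Girsanov entropy $\tfrac12\|\sigma^{-1}\|_\infty^2\bE_{\bar\bP}\!\int_0^t|u_s|^2 ds$ into the quadratic prefactor $\tfrac12\|\sigma^{-1}\|_\infty^2(1+2L^2(1+\!\int r\dmu)r(m)^{-2})\|y-\bar y\|^2_{\cH_\mu}$.

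The main obstacle is the calibration of the threshold $m$ between the two competing requirements above: the high-frequency part of $|\mu[Z]|^2$ must be absorbed by the coercive $\|Z\|^2_{\cV_\mu}$-term in the It\^o formula (this is what forces condition \eqref{Harnack_eq_m}), while simultaneously the low-frequency part yields the factor $r(m)^{-2}$ in the quadratic coefficient $\Phi$ and the factor $r(m)^{-1}$ inside $\Psi_t^2$. A secondary subtlety is the cross-drift arising from Girsanov when $\sigma$ is non-constant, which is handled by bounding $|\delta\sigma\,\sigma^{-1}(\mu[\bar Y])\,\delta b|\leq L^2\|\sigma^{-1}\|_\infty|\mu[Z]|^2$ and folding it into the same two-scale analysis; the right-invertibility and uniform bound on $\sigma^{-1}$ in \cref{Harnack_assum_coefficients} are exactly what is needed here, as is the check (by truncation) that $R_t$ is a genuine martingale.
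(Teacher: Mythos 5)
Your overall architecture (Girsanov coupling, Young's entropy inequality $\bE[R_tg]\leq\log\bE[e^g]+\bE[R_t\log R_t]$, and a two-scale splitting of $|\mu[Z]|^2$ at the threshold $m$) matches the paper's, and your splitting inequality itself is correct. The gap is in the choice of the coupling control: taking $u_t=b(\mu[Y^y_t])-b(\mu[\bar Y_t])$ only cancels the drift mismatch and introduces \emph{no contraction} between the two trajectories, and the three quantitative claims you build on it all fail. First, the decay of $\bE_{\bar\bP}[\|Z_t\|^2_{\cH_\mu}]$: after substituting your splitting into the It\^o formula and absorbing the high-frequency part via \eqref{Harnack_eq_m}, the low-frequency part of $\|\delta\sigma_t\|^2_{L_2(\bR^d;\cH_\mu)}$ leaves a \emph{positive} coefficient $c:=1+2L^2r(m)^{-2}\big(\int_\supp r(\theta)\dmu\big)^2$ in front of $\|Z_t\|^2_{\cH_\mu}$ (the ``$+1$'' arising because $-2\int_\supp\theta r(\theta)|Z_t(\theta)|^2\dmu+\|Z_t\|^2_{\cV_\mu}=-\int_\supp\theta r(\theta)|Z_t(\theta)|^2\dmu+\|Z_t\|^2_{\cH_\mu}$), so Gronwall yields $\bE[\|Z_t\|^2_{\cH_\mu}]\leq e^{(c-\beta)t}\|y-\bar y\|^2_{\cH_\mu}$, which generically grows; you cannot discard $c$ and conclude the bound $r(m)^{-1}e^{-\beta t}\|y-\bar y\|^2_{\cH_\mu}$. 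Second, without decay of $|\mu[Z_s]|$ the relative entropy $\tfrac12\bE_{\bar\bP}\int_0^t|\xi_s|^2\,\diff s\leq\tfrac12\|\sigma^{-1}\|^2_\infty L^2\,\bE_{\bar\bP}\int_0^t|\mu[Z_s]|^2\,\diff s$ is not bounded uniformly in $t$, so the $t$-independent quadratic prefactor in \eqref{Harnack_eq_Harnack} is not obtained. Third, the cross term produced by the change of measure is $-2\langle\int_\supp r(\theta)Z_t(\theta)\dmu,\delta\sigma_t\xi_t\rangle$ with $|\delta\sigma_t\xi_t|\leq L^2\|\sigma^{-1}\|_\infty|\mu[Z_t]|^2$: this is \emph{cubic} in $Z_t$ and cannot be closed by a quadratic Gronwall argument.

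The paper repairs all three defects at once by using a genuinely stabilizing control: the coupled process \eqref{Harnack_eq_coupling} carries the extra drift $\lambda\,\sigma(\mu[\bar Y_t])\sigma^{-1}(\mu[Y_t])\int_\supp r(m\vee\theta')(Y_t(\theta')-\bar Y_t(\theta'))\dmud$ with gain $\lambda=1+2L^2\big(1+\int_\supp r(\theta)\dmu\big)r(m)^{-2}$. Under the changed measure this appears as a linear damping $-\lambda\int_\supp r(m\vee\theta')Z_t(\theta')\dmud$ in the equation for $Z$, so the energy identity for the weighted quantity $\int_\supp r(m\vee\theta)|Z_t(\theta)|^2\dmu$ acquires the negative term $-2\lambda\big|\int_\supp r(m\vee\theta)Z_t(\theta)\dmu\big|^2$, which absorbs the low-frequency contributions of $|\delta b_t|^2+|\delta\sigma_t|^2$ \emph{and} the Cauchy--Schwarz cost of the drift cross term, while the high-frequency contributions are absorbed by the coercive term via \eqref{Harnack_eq_m} exactly as in your splitting; no cubic term appears because the control enters multiplied by $\sigma(\mu[\bar Y_t])$. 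The resulting inequality \eqref{Harnack_eq_estimate} simultaneously delivers the decay factor $r(m)^{-1/2}e^{-\beta t/2}$ and the uniform-in-$t$ entropy bound $\tfrac12\|\sigma^{-1}\|^2_\infty\lambda^2/\Xi(\mu,L,m,\lambda)=\tfrac12\|\sigma^{-1}\|^2_\infty\lambda$. To salvage your argument you must add such a feedback term to your control; drift cancellation alone is not enough.
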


%% Remark

\begin{rem}
\begin{itemize}
\item[(i)]
Under \cref{lift_assum_kernel}, there exists a constant $m\geq1$ such that \eqref{Harnack_eq_m} holds.
\item[(ii)]
\cref{Harnack_theo_Harnack} holds true even for $\beta:=\inf\supp=0$. However, in order to control the asymptotic decay of the remainder term of \eqref{Harnack_eq_Harnack} as $t\to\infty$, we need $\beta>0$. This excludes the case of the fractional kernel, while the Gamma kernel satisfies this condition; see \cref{lift_exam_kernel}. We need further considerations for the case $\beta=0$, and we leave this problem to the future research.
\end{itemize}
\end{rem}

By means of the general results established in \cite{BaWaYu19,Xu11}, we see that the asymptotic log-Harnack inequality \eqref{Harnack_eq_Harnack} (with $\beta>0$) implies that the Markov semigroup $\{P_t\}_{t\geq0}$ is \emph{asymptotically strong Feller}, \emph{asymptotically irreducible}, and \emph{possesses at most one invariant probability measure}. Before stating the results, let us recall the definition of the asymptotic strong Feller property introduced by Hairer and Mattingly \cite{HaMa06}.

%% Definition

\begin{defi}
Let $E$ be a Polish space endowed with the Borel $\sigma$-field, and $\cP(E)$ the set of probability measures on $E$.
\begin{itemize}
\item
A continuous function $d:E\times E\to[0,\infty)$ is called a \emph{pseudo-metric} on $E$ if $d(x,x)=0$ and $d(x,z)\leq d(x,y)+d(y,z)$ hold for $x,y,z\in E$. An increasing sequence of pseudo-metrics $\{d_k\}_{k\in\bN}$ (i.e., $d_k(\cdot,\cdot)\leq d_{k+1}(\cdot,\cdot)$ for any $k\in\bN$) is said to be a \emph{totally separating system} if $\lim_{k\to\infty}d_k(x,y)=1$ for any $x\neq y$.
\item
For a pseudo-metric $d$, the \emph{transportation cost} (which is also called the $L^1$-Wasserstein distance when $d$ is a metric) is defined by
\begin{equation*}
	\bW^d_1(p_1,p_2):=\inf_{p\in\sC(p_1,p_2)}\int_{E\times E}d(x,y)\,p(\diff x,\diff y),\ p_1,p_2\in\cP(E),
\end{equation*}
where $\sC(p_1,p_2)$ is the set of couplings of $p_1$ and $p_2$, that is, $p\in\sC(p_1,p_2)$ means $p\in\cP(E\times E)$ with $p(\cdot\times E)=p_1$ and $p(E\times\cdot)=p_2$.
\item
A Markov semigroup $\{P_t\}_{t\geq0}$ on $E$ is called \emph{asymptotically strong Feller} at a point $x\in E$ if there exists a totally separating system $\{d_k\}_{k\in\bN}$ of pseudo-metrics and an increasing sequence $\{t_k\}_{k\in\bN}$ of nonnegative numbers with $\lim_{k\to\infty}t_k=\infty$ such that
\begin{equation*}
	\inf_{U\in\cU_x}\limsup_{k\to\infty}\sup_{y\in U}\bW^{d_k}_1(P_{t_k}(x,\cdot),P_{t_k}(y,\cdot))=0,
\end{equation*}
where $\cU_x$ denotes the collection of all open sets of $E$ containing a point $x\in E$, and $P_t(x,A):=P_t\1_A(x)$ for $t\geq0$, $x\in E$ and a measurable set $A\subset E$. $\{P_t\}_{t\geq0}$ is called asymptotically strong Feller if it is asymptotically strong Feller at any $x\in E$.
\end{itemize}
\end{defi}

Now we state important consequences of \cref{Harnack_theo_Harnack}, which follow by applying \cite[Theorem 2.1]{BaWaYu19} to our result.

%% Corollary

\begin{cor}\label{Harnack_cor_application}
Let \cref{lift_assum_kernel} and \cref{Harnack_assum_coefficients} hold. Furthermore, assume that $\beta:=\inf\supp>0$. Fix a constant $m\geq1$ satisfying \eqref{Harnack_eq_m}, and denote
\begin{equation*}
	\Lambda:=\frac{1}{2}\|\sigma^{-1}\|^2_\infty\Big(1+2L^2\Big(1+\int_\supp r(\theta)\dmu\Big)r(m)^{-2}\Big).
\end{equation*}
Then the following hold:
\begin{itemize}
\item[(i)]
\emph{(Gradient estimate)}.
For any $t>0$ and any $f\in\Bb$ with $\|\nabla f\|_\infty<\infty$, it holds that
\begin{equation*}
	|\nabla P_t f|\leq\sqrt{2\Lambda}\sqrt{P_tf^2-(P_tf)^2}+r(m)^{-1/2}e^{-\beta t/2}\|\nabla f\|_\infty.
\end{equation*}
In particular, $\{P_t\}_{t\geq0}$ is asymptotically strong Feller.
\item[(ii)]
\emph{(Asymptotic heat kernel estimate)}.
If $\{P_t\}_{t\geq0}$ has an invariant probability measure $\pi\in\cP(\cH_\mu)$, then for any $y\in\cH_\mu$ and any $f\in\Bb$ satisfying $f\geq0$ and $\|\nabla f\|_\infty<\infty$, it holds that
\begin{equation*}
	\limsup_{t\to\infty}P_tf(y)\leq\log\left(\frac{\int_{\cH_\mu}e^{f(\bar{y})}\,\pi(\diff\bar{y})}{\int_{\cH_\mu}e^{-\Lambda\|y-\bar{y}\|^2_{\cH_\mu}}\,\pi(\diff\bar{y})}\right).
\end{equation*}
Consequently, for any $y\in\cH_\mu$ and any closed set $A\subset\cH_\mu$ with $\pi(A)=0$, it holds that
\begin{equation*}
	\lim_{t\to\infty}P_t(y,A)=0.
\end{equation*}
\item[(iii)]
\emph{(Uniqueness of invariant probability measures)}.
$\{P_t\}_{t\geq0}$ has at most one invariant probability measure.
\item[(iv)]
\emph{(Asymptotic irreducibility)}.
Let $A\subset\cH_\mu$ be a Borel measurable set such that
\begin{equation*}
	\delta(y,A):=\liminf_{t\to\infty}P_t(y,A)>0
\end{equation*}
for some $y\in\cH_\mu$. Then, for any $\bar{y}\in\cH_\mu$ and $\ep>0$, it holds that
\begin{equation*}
	\liminf_{t\to\infty}P_t(\bar{y},A_\ep)>0,
\end{equation*}
where $A_\ep:=\{z\in\cH_\mu\,|\,\inf_{w\in A}\|z-w\|_{\cH_\mu}<\ep\}$. Moreover, for any $\ep_0\in(0,\delta(y,A))$, there exists a constant $t_0>0$ such that
\begin{equation*}
	P_t(\bar{y},A_\ep)>0\ \text{provided}\ t\geq t_0\ \text{and}\ r(m)^{-1/2}e^{-\beta t/2}\|y-\bar{y}\|_{\cH_\mu}<\ep\ep_0.
\end{equation*}
\end{itemize}
\end{cor}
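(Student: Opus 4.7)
The plan is to derive assertions (i)--(iv) as direct consequences of the asymptotic log-Harnack inequality \eqref{Harnack_eq_Harnack}, identifying $\Phi(y,\bar y) := \Lambda\|y-\bar y\|^2_{\cH_\mu}$ and $\Psi_t(y,\bar y) := r(m)^{-1/2}e^{-\beta t/2}\|y-\bar y\|_{\cH_\mu}$. With these identifications the inequality fits verbatim into the abstract framework of \cite[Theorem 2.1]{BaWaYu19}, and the hypothesis $\beta>0$ guarantees $\Psi_t(y,\bar y)\to 0$ exponentially as $t\to\infty$, which is precisely what activates the ``asymptotic'' character of the four conclusions.

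For (i), I would apply \eqref{Harnack_eq_Harnack} with $f$ replaced by $e^{\ep g}$ for $g\in\Bb$ with $\|\nabla g\|_\infty<\infty$ and small $\ep>0$, use the expansion $\log P_t e^{\ep g}(y) = \ep P_t g(y) + \tfrac{\ep^2}{2}(P_tg^2(y) - (P_tg(y))^2) + O(\ep^3)$, optimize over $\ep$ (the optimum is $\ep\sim\sqrt{2\Lambda\|y-\bar y\|^2_{\cH_\mu}/(P_tg^2-(P_tg)^2)}$), divide by $\|y-\bar y\|_{\cH_\mu}$ and pass to the limit $\bar y\to y$. The resulting gradient estimate, coupled with the Hairer--Mattingly criterion \cite{HaMa06} applied with the totally separating system $d_k(y,\bar y) := 1\wedge(k\|y-\bar y\|_{\cH_\mu})$ and times $t_k$ chosen so that $k\cdot r(m)^{-1/2}e^{-\beta t_k/2}\to 0$, yields the asymptotic strong Feller property.

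For (ii), I would apply \eqref{Harnack_eq_Harnack} with $y,\bar y$ swapped (using the symmetry of $\Phi,\Psi_t$) to $f=e^g$ for bounded Lipschitz $g\geq 0$, exponentiate to the pointwise bound $e^{P_tg(y)}\,e^{-\Phi(y,\bar y)-\Psi_t(y,\bar y)\|\nabla g\|_\infty}\leq P_te^g(\bar y)$, integrate against $\pi$ in $\bar y$ via the invariance identity $\int P_te^g\,d\pi = \int e^g\,d\pi$, and let $t\to\infty$ by dominated convergence. For the vanishing of $P_t(y,A)$ on $\pi$-null closed $A$, I would apply the heat-kernel bound to $\alpha g_\ep$, where $g_\ep(z) := (1-\mathrm{dist}(z,A)/\ep)_+$ is a bounded $(1/\ep)$-Lipschitz approximant of $\1_A$: sending $\ep\downarrow 0$ forces $\int e^{\alpha g_\ep}\,d\pi\to 1$ by dominated convergence (since $\pi(A)=0$), and then dividing by $\alpha$ and letting $\alpha\uparrow\infty$ gives $\limsup_t P_t(y,A)\leq 0$. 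Assertion (iii) follows from (ii) by a standard closed-set argument: for two invariant measures $\pi_1,\pi_2$ and any closed $A$ with $\pi_1(A)=0$, invoking (ii) under $\pi_1$ plus invariance $\pi_2(A)=\int P_t(y,A)\,d\pi_2(y)$ together with dominated convergence yields $\pi_2(A)=0$, which extends by inner regularity on the Polish space $\cH_\mu$ to all Borel $A$; the standard ergodic-decomposition argument combined with the asymptotic strong Feller property from (i) then forces $\pi_1=\pi_2$ (see \cite[Theorem 2.1]{BaWaYu19} for the details).

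Assertion (iv) is the most delicate, and I expect the bookkeeping here to be the main obstacle. Take a bounded Lipschitz $g_\ep$ with $\1_A\leq g_\ep\leq\1_{A_\ep}$ and $\|\nabla g_\ep\|_\infty\leq 1/\ep$, apply \eqref{Harnack_eq_Harnack} with $y,\bar y$ swapped to $f := e^{\alpha g_\ep}$, and combine $P_t(\alpha g_\ep)(y)\geq\alpha P_t(y,A)$ with the elementary bound $P_te^{\alpha g_\ep}(\bar y)\leq 1 + e^\alpha P_t(\bar y,A_\ep)$ to arrive at
\begin{equation*}
\log\bigl(1 + e^\alpha P_t(\bar y,A_\ep)\bigr)\geq\alpha P_t(y,A) - \Lambda\|y-\bar y\|^2_{\cH_\mu} - \frac{\alpha}{\ep}\,r(m)^{-1/2}e^{-\beta t/2}\|y-\bar y\|_{\cH_\mu}.
\end{equation*}
For $t\geq t_0$ chosen large enough that $P_t(y,A)\geq\ep_0$ and simultaneously $r(m)^{-1/2}e^{-\beta t/2}\|y-\bar y\|_{\cH_\mu}<\ep\ep_0$, the right-hand side is strictly positive for all sufficiently large $\alpha$, which forces $P_t(\bar y,A_\ep)>0$; fixing $\alpha$ large and letting $t\to\infty$ then yields the strict positivity of $\liminf_{t\to\infty}P_t(\bar y,A_\ep)$. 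The delicate point is coordinating the limits $\alpha\to\infty$, $\ep\downarrow 0$, and $t\to\infty$ so that the remainder $(\alpha/\ep)\Psi_t$ does not swallow the main term $\alpha\ep_0$; the two simultaneous inequalities imposed on $t_0$ in the statement are exactly tailored to this balance.
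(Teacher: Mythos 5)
Your proposal is correct and follows exactly the paper's route: the paper proves this corollary simply by observing that \eqref{Harnack_eq_Harnack} is an asymptotic log-Harnack inequality with $\Phi(y,\bar y)=\Lambda\|y-\bar y\|^2_{\cH_\mu}$ and $\Psi_t(y,\bar y)=r(m)^{-1/2}e^{-\beta t/2}\|y-\bar y\|_{\cH_\mu}$ (so that $\beta>0$ gives $\Psi_t\to0$) and invoking \cite[Theorem 2.1]{BaWaYu19}. Your unfolding of the internal arguments of that abstract theorem (the $e^{\ep g}$ expansion and optimization for (i), the exponentiate-and-integrate step for (ii), mutual absolute continuity for (iii), and the $\alpha g_\ep$ bookkeeping for (iv)) is consistent with how that cited result is proved, modulo the trivial normalization $f=e^{\ep(g-\inf g)}$ needed to ensure $f\geq1$.
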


Now we illustrate the idea of the proof of \cref{Harnack_theo_Harnack}. A detailed proof is given in the next subsection. Basically, we follow the asymptotic coupling method adopted in \cite{BaWaYu19}, which we shall explain here from a stochastic control theoretical point of view. First, fix $y,\bar{y}\in\cH_\mu$, and consider a ``control process'' $v$, which is an $\bR^d$-valued predictable process such that the stochastic exponential
\begin{equation*}
	R^v_t:=\exp\Big(-\int^t_0\langle v_s,\diff W_s\rangle-\frac{1}{2}\int^t_0|v_s|^2\,\diff s\Big),\ t\geq0,
\end{equation*}
is well-defined and becomes a uniformly integrable martingale under $\bP$. Then, consider the controlled SEE
\begin{equation*}
	\begin{dcases}
	\diff \bar{Y}^v_t(\theta)=-\theta \bar{Y}^v_t(\theta)\,\diff t+b(\mu[\bar{Y}^v_t])\,\diff t+\sigma(\mu[\bar{Y}^v_t])\,(\diff W_t+v_t\,\diff t),\ t>0,\ \theta\in\supp,\\
	\bar{Y}^v_0(\theta)=\bar{y}(\theta),\ \theta\in\supp.
	\end{dcases}
\end{equation*}
By Girsanov's theorem, the process $\bar{W}^v_t:=W_t+\int^t_0v_s\,\diff s$, $t\geq0$, is a $d$-dimensional Brownian motion under the probability measure $\bQ^v$ on $(\Omega,\cF)$ defined by $\frac{\diff\bQ^v}{\diff\bP}:=R^v_\infty:=\lim_{t\to\infty}R^v_t$. Thus, by means of the Yamada--Watanabe theorem, we see that the law of $\bar{Y}^v$ under $\bQ^v$ is equal to the law of $Y^{\bar{y}}$ under $\bP$. Denote by $\bE$ and $\bE_{\bQ^v}$ the expectations with respect to $\bP$ and $\bQ^v$, respectively. Consequently, for any $t\geq0$ and any $f\in\Bb$ such that $f\geq1$ and $\|\nabla\log f\|_\infty<\infty$, we have
\begin{align*}
	P_t\log f(\bar{y})&=\bE\big[\log f(Y^{\bar{y}}_t)\big]=\bE_{\bQ^v}\big[\log f(\bar{Y}^v_t)\big]=\bE\big[R^v_t\log f(Y^y_t)\big]-\bE_{\bQ^v}\big[\log f(Y^y_t)-\log f(\bar{Y}^v_t)\big]\\
	&\leq\log\bE\big[f(Y^y_t)\big]+\bE\big[R^v_t\log R^v_t\big]+\bE_{\bQ^v}\big[\|Y^y_t-\bar{Y}^v_t\|_{\cH_\mu}\big]\|\nabla\log f\|_\infty\\
	&=\log P_tf(y)+\bE\big[R^v_t\log R^v_t\big]+\bE_{\bQ^v}\big[\|Y^y_t-\bar{Y}^v_t\|_{\cH_\mu}\big]\|\nabla\log f\|_\infty,
\end{align*}
where we used Young's inequality (cf.\ \cite[Lemma 2.4]{ArThWa09}) in the second line. Therefore, the problem is reduced to \emph{finding a control process $v$ which controls the (asymptotic) behaviour of $\bE_{\bQ^v}\big[\|Y^y_t-\bar{Y}^v_t\|_{\cH_\mu}\big]$ with a small relative entropy cost $\bE\big[R^v_t\log R^v_t\big]$}. In the next subsection, we construct such a control process $v$.

%% Remark

\begin{rem}\label{Harnack_rem_previous}
Hong, Li and Liu \cite{HoLiLi20} and Liu \cite{Li20} derived asymptotic log-Harnack inequalities for monotone SPDEs with multiplicative noise. Unfortunately, the results (or the methods) of \cite{HoLiLi20,Li20} cannot be applied to the lifted SEE \eqref{lift_eq_SEE} or its variational form \eqref{lift_eq_variational} by the following reasons:
\begin{itemize}
\item
In \cite[Theorem 2.1]{HoLiLi20}, the assumption that the nonnegative closed operator $-A$ has infinitely many and sufficiently large eigenvalues is crucial. This is related to the so-called ``essential ellipticity condition'' introduced by Hairer and Mattingly \cite{HaMa06}. However, such a spectral condition does not necessarily hold in our framework. Indeed, if the Radon measure $\mu$ is diffusive (e.g., the fractional kernel and the Gamma kernel), then $-A$ has no eigenvalues; see \cref{lift_rem_space} (iii). In addition, \cite[Theorem 2.1]{HoLiLi20} requires the Lipschitz continuity of the (multiplicative) diffusion coefficient $\sigma$ as a map from the state space to the space of Hilbert--Schmidt operator, which is not the case in our framework due to the non-continuity of the integral operator $y\mapsto\mu[y]$ on the Hilbert space $\cH_\mu$.
\item
\cite[Theorem 3.1]{Li20} requires that the diffusion coefficient, which is a Hilbert--Schmidt operator from the noise space to the state space, has a right inverse, and hence surjective. This means that the noise (Brownian motion) has to be infinite dimensional when the state space is infinite dimensional. A difficulty of our framework is that the lifted SEE \eqref{lift_eq_SEE} is \emph{highly degenerate}; the state space $\cH_\mu$ is (typically) infinite dimensional, while the Brownian motion $W$ is finite dimensional.
\end{itemize}
We have to control the asymptotic behaviour of an infinite dimensional state process by a finite dimensional ($d$-dimensional) control process. This is of course a very non-trivial (or even impossible) task in general\footnote{There are some positive results in this direction, but their methods heavily depend on the special structures considered therein. See \hyperref[intro]{Introduction}.}. Nevertheless, by means of the special structure of the lifted SEE \eqref{lift_eq_SEE}, we succeed to obtain a positive result.
\end{rem}

%%%%%%%%%%%%%%
%% Subsection
%%%%%%%%%%%%%%

\subsection{Proof of \cref{Harnack_theo_Harnack}}

Now we prove \cref{Harnack_theo_Harnack}. Suppose that \cref{lift_assum_kernel} and \cref{Harnack_assum_coefficients} hold. Let $y,\bar{y}\in\cH_\mu$ be fixed. Let $Y:=Y^y$ be the mild solution of the lifted SEE \eqref{lift_eq_SEE} with the initial condition $y$. By \cref{lift_theo_bound}, we have
\begin{equation*}
	\bE\Big[\sup_{t\in[0,T]}\|Y_t\|^2_{\cH_\mu}+\int^T_0\|Y_t\|^2_{\cV_\mu}\,\diff t\Big]<\infty
\end{equation*}
for any $T>0$. We introduce the following equation:
\begin{equation}\label{Harnack_eq_coupling}
	\begin{dcases}
	\diff\bar{Y}_t(\theta)=-\theta\bar{Y}_t(\theta)\,\diff t+\Big\{b(\mu[\bar{Y}_t])+\lambda\sigma(\mu[\bar{Y}_t])\sigma^{-1}(\mu[Y_t])\int_\supp r(m\vee\theta')(Y_t(\theta')-\bar{Y}_t(\theta'))\dmud\Big\}\,\diff t\\
	\hspace{6cm}+\sigma(\mu[\bar{Y}_t])\,\diff W_t,\ \ t>0,\ \theta\in\supp,\\
	\bar{Y}_0(\theta)=\bar{y}(\theta),\ \theta\in\supp,
	\end{dcases}
\end{equation}
where $m\geq1$ is an arbitrary constant satisfying \eqref{Harnack_eq_m}, and $\lambda=\lambda(\mu,L,m)\geq1$ is defined by
\begin{equation}\label{Harnack_eq_lambda}
	\lambda:=1+2L^2\Big(1+\int_\supp r(\theta)\dmu\Big)r(m)^{-2}.
\end{equation}
Here, we remark that, by \cref{lift_assum_kernel} (in particular, by the assumptions that $r:[0,\infty)\to(0,\infty)$ is non-increasing and $r\leq1$), we have
\begin{equation}\label{Harnack_eq_r}
	r(m)r(\theta)\leq r(m\vee\theta)\leq r(\theta)
\end{equation}
for any $\theta\geq0$. Thus, the map $y\mapsto\int_\supp r(m\vee\theta)y(\theta)\dmu$ is a bounded linear operator from $\cH_\mu$ to $\bR^n$.

Observe that $\eqref{Harnack_eq_coupling}$ is a generalized SEE of the form \eqref{lift_eq_general-SEE}. The diffusion coefficient $\sigma:\bR^n\to\bR^{n\times d}$ is globally Lipschitz continuous, and the drift coefficient $\tilde{b}:\Omega\times[0,\infty)\times\bR^n\times\cH_\mu\to\bR^n$ defined by
\begin{equation*}
	\tilde{b}(t,x,y):=b(x)+\lambda\sigma(x)\sigma^{-1}(\mu[Y_t])\int_\supp r(m\vee\theta)(Y_t(\theta)-y(\theta))\dmu,\ t\geq0,\ x\in\bR^n,\ y\in\cH_\mu,
\end{equation*}
satisfies \cref{lift_assum_general-SEE} (i) (the measurability condition), (ii) (the linear growth condition), and (iii)' (the local Lipschitz condition). First, the measurability condition is trivial. Second, observe that, for any $t\geq0$, $x\in\bR^n$ and $y\in\cH_\mu$,
\begin{align*}
	|\tilde{b}(t,x,y)|&\leq|b(0)|+|b(x)-b(0)|+\lambda\|\sigma\|_\infty\|\sigma^{-1}\|_\infty\int_\supp r(\theta)\big(|Y_t(\theta)|+|y(\theta)|\big)\dmu\\
	&\leq|b(0)|+L|x|+\lambda\|\sigma\|_\infty\|\sigma^{-1}\|_\infty\Big(\int_\supp r(\theta)\dmu\Big)^{1/2}\big(\|Y_t\|_{\cH_\mu}+\|y\|_{\cH_\mu}\big),
\end{align*}
where we used \eqref{Harnack_eq_r}. Since $\bE\big[\int^T_0\|Y_t\|^2_{\cH_\mu}\,\diff t\big]<\infty$ for any $T>0$, we see that \cref{lift_assum_general-SEE} (ii) (the linear growth condition) holds. And finally, for any $t\geq0$, $x,\bar{x}\in\bR^n$ and $y,\bar{y}\in\cH_\mu$,
\begin{align*}
	|\tilde{b}(t,x,y)-\tilde{b}(t,\bar{x},\bar{y})|&\leq|b(x)-b(\bar{x})|+\lambda|\sigma(x)-\sigma(\bar{x})||\sigma^{-1}(\mu[Y_t])|\int_\supp r(\theta)\big(|Y_t(\theta)|+|y(\theta)|\big)\dmu\\
	&\hspace{0.5cm}+\lambda|\sigma(\bar{x})|\,|\sigma^{-1}(\mu[Y_t])|\int_\supp r(\theta)|y(\theta)-\bar{y}(\theta)|\dmu\\
	&\leq \Big\{1+\lambda\|\sigma^{-1}\|_\infty\Big(\int_\supp r(\theta)\dmu\Big)^{1/2}\big(\|Y_t\|_{\cH_\mu}+\|y\|_{\cH_\mu}\big)\Big\}L|x-\bar{x}|\\
	&\hspace{0.5cm}+\lambda\|\sigma\|_\infty\|\sigma^{-1}\|_\infty\Big(\int_\supp r(\theta)\dmu\Big)^{1/2}\|y-\bar{y}\|_{\cH_\mu}.
\end{align*}
Thus, \cref{lift_assum_general-SEE} (iii)' (the local Lipschitz condition) holds with the stopping times $\tau_k:=\inf\{t\geq0\,|\,\|Y_t\|_{\cH_\mu}>k\}$, $k\in\bN$, and suitable constants $L_k>0$, $k\in\bN$. Therefore, by \cref{lift_theo_local-Lip}, there exists a unique mild solution $\bar{Y}$ of the generalized SEE \eqref{Harnack_eq_coupling}. By \cref{lift_theo_bound}, we have
\begin{equation*}
	\bE\Big[\sup_{t\in[0,T]}\|\bar{Y}_t\|^2_{\cH_\mu}+\int^T_0\|\bar{Y}_t\|^2_{\cV_\mu}\,\diff t\Big]<\infty
\end{equation*}
for any $T>0$. We may assume that $Y$ and $\bar{Y}$ are (jointly measurable) families of $\bR^n$-valued It\^{o} processes; see \cref{lift_rem_Ito}.

Define a control process $v$ by
\begin{equation*}
	v_t:=\lambda\sigma^{-1}(\mu[Y_t])\int_\supp r(m\vee\theta)(Y_t(\theta)-\bar{Y}_t(\theta))\dmu,\ t\geq0.
\end{equation*}
Then $v$ is an $\bR^d$-valued predictable process satisfying $\bE\big[\int^T_0|v_t|^2\,\diff t\big]<\infty$ for any $T>0$. Thus, the stochastic exponential
\begin{equation*}
	R_t:=\exp\Big(-\int^t_0\langle v_s,\diff W_s\rangle-\frac{1}{2}\int^t_0|v_s|^2\,\diff s\Big),\ t\geq0,
\end{equation*}
is well-defined and is a local martingale under the probability measure $\bP$. Furthermore, we have:

%% Lemma

\begin{lemm}\label{Harnack_lemm_R}
The stochastic exponential $R$ is a uniformly integrable martingale under the probability measure $\bP$. Furthermore, it holds that
\begin{equation}\label{Harnack_eq_R}
	\sup_{t\in[0,\infty)}\bE\big[R_t\log R_t\big]\leq\frac{1}{2}\|\sigma^{-1}\|^2_\infty\Big(1+2L^2\Big(1+\int_\supp r(\theta)\dmu\Big)r(m)^{-2}\Big)\|y-\bar{y}\|^2_{\cH_\mu}.
\end{equation}
\end{lemm}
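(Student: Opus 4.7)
My plan is in three steps: verify that $R$ is a uniformly integrable $\bP$-martingale; invoke Girsanov to rewrite the entropy $\bE[R_t\log R_t]$ as $\tfrac12\bE_\bQ\big[\int_0^t|v_s|^2\,\diff s\big]$; and then carry out an energy estimate for $Z_t:=Y_t-\bar Y_t$ under $\bQ$ that, thanks to the precise choice of $\lambda$ and $m$, controls $\bE_\bQ\big[\int_0^t|v_s|^2\,\diff s\big]$ by $\|y-\bar y\|^2_{\cH_\mu}$. The martingale property is first established for the stopped exponential $R^{\tau_N}$, with $\tau_N:=\inf\{t\ge 0:\|Y_t\|_{\cH_\mu}\vee\|\bar Y_t\|_{\cH_\mu}>N\}\wedge N$ (which tends to $\infty$ a.s.\ since $Y,\bar Y$ are $\cH_\mu$-continuous with finite second moments on compacts, and $v$ is bounded on $[0,\tau_N]$), and is then upgraded to uniform integrability using the uniform $L\log L$-bound produced in the energy estimate together with the de la Vall\'ee--Poussin criterion.

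The crucial energy estimate proceeds as follows. Under $\bQ$ the identity $\sigma(\mu[Y_t])\sigma^{-1}(\mu[Y_t])=I_n$ gives
\begin{align*}
\diff Y_t(\theta) &= -\theta Y_t(\theta)\,\diff t + (b(\mu[Y_t])-\lambda u_t)\,\diff t + \sigma(\mu[Y_t])\,\diff\bar W_t,\\
\diff\bar Y_t(\theta) &= -\theta\bar Y_t(\theta)\,\diff t + b(\mu[\bar Y_t])\,\diff t + \sigma(\mu[\bar Y_t])\,\diff\bar W_t,
\end{align*}
where $u_t:=\int_\supp r(m\vee\theta')Z_t(\theta')\dmud$, so $Z_t$ solves, in the Gelfand triplet,
\[
\diff Z_t=(AZ_t+b(\mu[Y_t])-b(\mu[\bar Y_t])-\lambda u_t)\,\diff t+(\sigma(\mu[Y_t])-\sigma(\mu[\bar Y_t]))\,\diff\bar W_t.
\]
Applying It\^o's formula to $\|Z_t\|^2_{\cH_\mu}$, using $2\dual{Ay}{y}=-2\|y\|^2_{\cV_\mu}+2\|y\|^2_{\cH_\mu}$ from \cref{lift_lemm_space}(ii), the Lipschitz bounds $|b(\mu[Y_t])-b(\mu[\bar Y_t])|\le L|\mu[Z_t]|$ and $\|\sigma(\mu[Y_t])-\sigma(\mu[\bar Y_t])\|^2_{L_2(\bR^d;\cH_\mu)}\le L^2|\mu[Z_t]|^2\int_\supp r(\theta)\dmu$, and writing $\tilde u_t:=\int_\supp r(\theta)Z_t(\theta)\dmu$, yields (up to a $\bQ$-local martingale)
\[
\diff\|Z_t\|^2_{\cH_\mu} \le \Big\{-2\|Z_t\|^2_{\cV_\mu}+2\|Z_t\|^2_{\cH_\mu}+2L|\mu[Z_t]||\tilde u_t|-2\lambda\langle u_t,\tilde u_t\rangle+L^2|\mu[Z_t]|^2\int_\supp r(\theta)\dmu\Big\}\diff t.
\]

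The main obstacle is to show that the bracket collapses to at most $-\lambda|u_t|^2$. I would split $\tilde u_t=u_t+(\tilde u_t-u_t)$ and extract $-\lambda|u_t|^2$ from $-2\lambda\langle u_t,\tilde u_t\rangle$ by Young's inequality (with residual $\lambda|\tilde u_t-u_t|^2$); use $r(m\vee\theta)\ge r(m)r(\theta)$ from \eqref{Harnack_eq_r} to express $|\mu[Z_t]|$ and $|\tilde u_t-u_t|$ as $r(m)^{-1}|u_t|$ plus corrections controlled by $\|Z_t\|^2_{\cV_\mu}$ and $\int_{\supp\cap[m,\infty)}r(\theta)\dmu\cdot\|Z_t\|^2_{\cH_\mu}$ (via Cauchy--Schwarz on each of the subregions $\supp\cap[0,m)$ and $\supp\cap[m,\infty)$); absorb the $\|Z_t\|^2_{\cV_\mu}$-contributions into the coercive term $-2\|Z_t\|^2_{\cV_\mu}$; and absorb the $\|Z_t\|^2_{\cH_\mu}$-contributions via the smallness condition \eqref{Harnack_eq_m}. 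The definition \eqref{Harnack_eq_lambda} of $\lambda$ has been calibrated exactly so that, after this bookkeeping, the bracket is dominated by $-\lambda|u_t|^2$. Integrating over $[0,t]$ and taking $\bE_\bQ$ then produces $\bE_\bQ[\|Z_t\|^2_{\cH_\mu}]+\lambda\bE_\bQ\big[\int_0^t|u_s|^2\,\diff s\big]\le \|y-\bar y\|^2_{\cH_\mu}$. Combined with $|v_s|^2\le\lambda^2\|\sigma^{-1}\|^2_\infty|u_s|^2$ and the Girsanov identity $\bE[R_t\log R_t]=\tfrac12\bE_\bQ\big[\int_0^t|v_s|^2\,\diff s\big]$, this yields \eqref{Harnack_eq_R}, which a posteriori produces the $t$-uniform $L\log L$-bound needed to upgrade $R$ to a uniformly integrable $\bP$-martingale.
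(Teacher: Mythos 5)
Your overall architecture is the paper's: localize with stopping times so that $v$ is bounded, apply Girsanov to get $\bE[R_{t\wedge\tau}\log R_{t\wedge\tau}]=\tfrac12\bE_{\bQ}\big[\int_0^{t\wedge\tau}|v_s|^2\,\diff s\big]$, run an energy estimate for $Z:=Y-\bar Y$ under $\bQ$, and upgrade to uniform integrability by Vitali/Fatou; your treatment of $|\mu[Z_t]|^2$ (split at $\theta=m$, tail controlled via \eqref{Harnack_eq_m} and absorbed into the coercive term) and the final arithmetic with $\lambda$ are also exactly the paper's. However, there is a genuine gap at the step you yourself identify as the main obstacle. Because you apply It\^o's formula to $\|Z_t\|^2_{\cH_\mu}=\int_\supp r(\theta)|Z_t(\theta)|^2\dmu$, the feedback term produces $-2\lambda\langle u_t,\tilde u_t\rangle$ with $\tilde u_t=\int_\supp r(\theta)Z_t(\theta)\dmu$, and after Young's inequality the residual is $\lambda|\tilde u_t-u_t|^2$ where $\tilde u_t-u_t=\int_{\supp\cap[0,m)}(r(\theta)-r(m))Z_t(\theta)\dmu$. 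This difference is supported on $[0,m)$, \emph{not} on $[m,\infty)$, so it is not controlled by $\int_{\supp\cap[m,\infty)}r(\theta)\dmu\cdot\|Z_t\|^2_{\cH_\mu}$ as you claim; indeed $u$ and $\tilde u$ are genuinely different functionals (take $Z$ supported on $[0,m)$ with $\int_{[0,m)}Z\dmu=0$ but $\int_{[0,m)}r(\theta)Z(\theta)\dmu\neq0$: then $u=0$ while $\tilde u\neq0$). The best available bounds give $\lambda|\tilde u_t-u_t|^2\le\lambda\,\ep\,\|Z_t\|^2_{\cV_\mu}+\lambda M_\ep\|Z_t\|^2_{\cH_\mu}$ with $M_\ep$ large, and since the It\^o drift only supplies $-2\|Z_t\|^2_{\cV_\mu}+2\|Z_t\|^2_{\cH_\mu}=-2\int_\supp\theta r(\theta)|Z_t(\theta)|^2\dmu$, there is no negative multiple of $\|Z_t\|^2_{\cH_\mu}$ to absorb the leftover; Gronwall then only yields $\bE_{\bQ}\big[\int_0^t|u_s|^2\,\diff s\big]\le e^{Ct}\|y-\bar y\|^2_{\cH_\mu}$, which destroys the $t$-uniform entropy bound \eqref{Harnack_eq_R}.

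The repair — and this is precisely what the paper does — is to match the weight of the Lyapunov functional to the weight in the control: apply It\^o's formula to $\int_\supp r(m\vee\theta)|Z_t(\theta)|^2\dmu$ rather than to $\|Z_t\|^2_{\cH_\mu}$. Then the cross term is exactly $-2\lambda|u_t|^2$ with no residual, the functional is sandwiched between $r(m)\|Z_t\|^2_{\cH_\mu}$ and $\|Z_t\|^2_{\cH_\mu}$ by \eqref{Harnack_eq_r} (which is where the factors $r(m)^{\pm1}$ in the statement come from), and the only remaining error is the one you already handle correctly, namely $|\mu[Z_t]|^2$, whose tail part lives on $[m,\infty)$ and is absorbed using \eqref{Harnack_eq_m}. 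With that single substitution the rest of your argument, including the choice \eqref{Harnack_eq_lambda} of $\lambda$ so that $\lambda^2/\Xi(\mu,L,m,\lambda)=\lambda$ and the limiting arguments for uniform integrability, goes through and reproduces the paper's proof.
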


%% Proof

\begin{proof}
For each $k\in\bN$, define
\begin{equation*}
	\tau_k:=\inf\{t\geq0\,|\,\|Y_t-\bar{Y}_t\|_{\cH_\mu}>k\}.
\end{equation*}
Then each $\tau_k$ is a stopping time and satisfies $\tau_k\leq\tau_{k+1}$ for any $k\in\bN$ and $\lim_{k\to\infty}\tau_k=\infty$ a.s. Let $k\in\bN$ be fixed. Since the process $v$ is bounded on $[0,\tau_k]$, the stopped process $R_{\cdot\wedge\tau_k}$ is a martingale under the probability measure $\bP$. Define a probability measure $\bQ_k$ on $(\Omega,\cF)$ by $\frac{\diff\bQ_k}{\diff\bP}:=R_{\tau_k}$. By Girsanov's theorem, the process
\begin{equation*}
	\bar{W}^k_t:=W_t+\int^{t\wedge\tau_k}_0v_s\,\diff s,\ t\geq0,
\end{equation*}
is a $d$-dimensional Brownian motion under $\bQ_k$. Observe that, for each $\theta\in\supp$, the processes $Y(\theta)$ and $\bar{Y}(\theta)$ satisfy
\begin{equation*}
	\diff Y_t(\theta)=-\theta Y_t(\theta)\,\diff t-\lambda\int_\supp r(m\vee\theta')(Y_t(\theta')-\bar{Y}_t(\theta'))\dmud\,\diff t+b(\mu[Y_t])\,\diff t+\sigma(\mu[Y_t])\,\diff\bar{W}^k_t
\end{equation*}
and
\begin{equation*}
	\diff\bar{Y}_t(\theta)=-\theta\bar{Y}_t(\theta)\,\diff t+b(\mu[\bar{Y}_t])\,\diff t+\sigma(\mu[\bar{Y}_t])\,\diff\bar{W}^k_t
\end{equation*}
for $t\in[0,\tau_k]$. Thus, by It\^{o}'s formula, we have
\begin{align*}
	\diff e^{\beta t}|Y_t(\theta)-\bar{Y}_t(\theta)|^2=&-(2\theta-\beta)e^{\beta t}|Y_t(\theta)-\bar{Y}_t(\theta)|^2\,\diff t\\
	&-2\lambda e^{\beta t}\Big\langle Y_t(\theta)-\bar{Y}_t(\theta),\int_\supp r(m\vee\theta')(Y_t(\theta')-\bar{Y}_t(\theta'))\dmud\Big\rangle\,\diff t\\
	&+2e^{\beta t}\langle Y_t(\theta)-\bar{Y}_t(\theta),b(\mu[Y_t])-b(\mu[\bar{Y}_t])\rangle\,\diff t+e^{\beta t}|\sigma(\mu[Y_t])-\sigma(\mu[\bar{Y}_t])|^2\,\diff t\\
	&+2e^{\beta t}\langle Y_t(\theta)-\bar{Y}_t(\theta),(\sigma(\mu[Y_t])-\sigma(\mu[\bar{Y}_t]))\,\diff \bar{W}^k_t\rangle
\end{align*}
for $t\in[0,\tau_k]$, where $\beta:=\inf\supp\geq0$. We integrate both sides with respect to $r(m\vee\theta)\dmu$. Note that, by \eqref{Harnack_eq_r},
\begin{align*}
	&\int_\supp\Big(\int^T_0e^{2\beta t}|Y_t(\theta)-\bar{Y}_t(\theta)|^2|\sigma(\mu[Y_t])-\sigma(\mu[\bar{Y}_t])|^2\,\diff t\Big)^{1/2}r(m\vee\theta)\dmu\\
	&\leq\int_\supp\Big(\int^T_0e^{2\beta t}|Y_t(\theta)-\bar{Y}_t(\theta)|^2|\sigma(\mu[Y_t])-\sigma(\mu[\bar{Y}_t])|^2\,\diff t\Big)^{1/2}r(\theta)\dmu\\
	&\leq e^{\beta T}\Big(\int_\supp r(\theta)\dmu\Big)^{1/2}\Big(\int^T_0\|Y_t-\bar{Y}_t\|^2_{\cH_\mu}|\sigma(\mu[Y_t])-\sigma(\mu[\bar{Y}_t])|^2\,\diff t\Big)^{1/2}\\
	&\leq2T^{1/2}\|\sigma\|_\infty e^{\beta T}\Big(\int_\supp r(\theta)\dmu\Big)^{1/2}\sup_{t\in[0,T]}\|Y_t-\bar{Y}_t\|_{\cH_\mu}\\
	&<\infty\ \ \text{a.s.}
\end{align*}
for any $T>0$. Thus, we can use the stochastic Fubini theorem (cf.\ \cite{Ve12}) and obtain
\begin{align*}
	&\diff e^{\beta t}\int_\supp r(m\vee\theta)|Y_t(\theta)-\bar{Y}_t(\theta)|^2\dmu\\
	&=-e^{\beta t}\int_\supp(2\theta-\beta)r(m\vee\theta)|Y_t(\theta)-\bar{Y}_t(\theta)|^2\dmu\,\diff t\\
	&\hspace{0.5cm}-2\lambda e^{\beta t}\Big|\int_\supp r(m\vee\theta)(Y_t(\theta)-\bar{Y}_t(\theta))\dmu\Big|^2\,\diff t\\
	&\hspace{0.5cm}+2e^{\beta t}\Big\langle\int_\supp r(m\vee\theta)(Y_t(\theta)-\bar{Y}_t(\theta))\dmu,b(\mu[Y_t])-b(\mu[\bar{Y}_t])\Big\rangle\,\diff t\\
	&\hspace{0.5cm}+e^{\beta t}\int_\supp r(m\vee\theta)\dmu\,|\sigma(\mu[Y_t])-\sigma(\mu[\bar{Y}_t])|^2\,\diff t\\
	&\hspace{0.5cm}+2e^{\beta t}\Big\langle\int_\supp r(m\vee\theta)(Y_t(\theta)-\bar{Y}_t(\theta))\dmu,(\sigma(\mu[Y_t])-\sigma(\mu[\bar{Y}_t]))\,\diff\bar{W}^k_t\Big\rangle
\end{align*}
for $t\in[0,\tau_k]$. Noting that $Y_0(\theta)=y(\theta)$ and $\bar{Y}_0(\theta)=\bar{y}(\theta)$ for $\mu$-a.e.\ $\theta\in\supp$, we have
\begin{align*}
	&e^{\beta(t\wedge\tau_k)}\int_\supp r(m\vee\theta)|Y_{t\wedge\tau_k}(\theta)-\bar{Y}_{t\wedge\tau_k}(\theta)|^2\dmu\\
	&+\int^{t\wedge\tau_k}_0e^{\beta s}\int_\supp(2\theta-\beta)r(m\vee\theta)|Y_s(\theta)-\bar{Y}_s(\theta)|^2\dmu\,\diff s\\
	&+2\lambda\int^{t\wedge\tau_k}_0e^{\beta s}\Big|\int_\supp r(m\vee\theta)(Y_s(\theta)-\bar{Y}_s(\theta))\dmu\Big|^2\,\diff s\\
	&=\int_\supp r(m\vee\theta)|y(\theta)-\bar{y}(\theta)|^2\dmu\\
	&\hspace{0.5cm}+\int^{t\wedge\tau_k}_0e^{\beta s}\Big\{2\Big\langle\int_\supp r(m\vee\theta)(Y_s(\theta)-\bar{Y}_s(\theta))\dmu,b(\mu[Y_s])-b(\mu[\bar{Y}_s])\Big\rangle\\
	&\hspace{3cm}+\int_\supp r(m\vee\theta)\dmu\,|\sigma(\mu[Y_s])-\sigma(\mu[\bar{Y}_s])|^2\Big\}\,\diff s\\
	&\hspace{0.5cm}+2\int^{t\wedge\tau_k}_0e^{\beta s}\Big\langle\int_\supp r(m\vee\theta)(Y_s(\theta)-\bar{Y}_s(\theta))\dmu,(\sigma(\mu[Y_s])-\sigma(\mu[\bar{Y}_s]))\,\diff\bar{W}^k_s\Big\rangle
\end{align*}
for any $t\geq0$ a.s. The integrand of the Lebesgue integral in the right-hand side is estimated as follows: By using the Lipschitz conditions for $b$ and $\sigma$ and the inequality \eqref{Harnack_eq_r}, we have
\begin{align*}
	&2\Big\langle\int_\supp r(m\vee\theta)(Y_s(\theta)-\bar{Y}_s(\theta))\dmu,b(\mu[Y_s])-b(\mu[\bar{Y}_s])\Big\rangle\\
	&\hspace{0.5cm}+\int_\supp r(m\vee\theta)\dmu\,|\sigma(\mu[Y_s])-\sigma(\mu[\bar{Y}_s])|^2\\
	&\leq\Big|\int_\supp r(m\vee\theta)(Y_s(\theta)-\bar{Y}_s(\theta))\dmu\Big|^2+|b(\mu[Y_s])-b(\mu[\bar{Y}_s])|^2\\
	&\hspace{1cm}+\int_\supp r(m\vee\theta)\dmu\,|\sigma(\mu[Y_s])-\sigma(\mu[\bar{Y}_s])|^2\\
	&\leq\Big|\int_\supp r(m\vee\theta)(Y_s(\theta)-\bar{Y}_s(\theta))\dmu\Big|^2+L^2\Big(1+\int_\supp r(\theta)\dmu\Big)\,|\mu[Y_s]-\mu[\bar{Y}_s]|^2.
\end{align*}
Now we estimate the term $|\mu[Y_s]-\mu[\bar{Y}_s]|^2$ appearing in the last line above\footnote{This is the most important and technical point in the proof.}. Recall that $Y_s$ and $\bar{Y}_s$ belong to $\cV_\mu$ a.s.\ for a.e.\ $s\geq0$. Since $\frac{r(m\vee\theta)}{r(m)}=1$ for $\theta\in[0,m)$ and $\frac{r(m\vee\theta)}{r(m)}\in(0,1]$ for $\theta\in[m,\infty)$, we have
\begin{align*}
	&|\mu[Y_s]-\mu[\bar{Y}_s]|^2=\Big|\int_\supp(Y_s(\theta)-\bar{Y}_s(\theta))\dmu\Big|^2\\
	&=\Big|\int_\supp\frac{r(m\vee\theta)}{r(m)}(Y_s(\theta)-\bar{Y}_s(\theta))\dmu+\int_\supp\Big(1-\frac{r(m\vee\theta)}{r(m)}\Big)(Y_s(\theta)-\bar{Y}_s(\theta))\dmu\Big|^2\\
	&\leq2r(m)^{-2}\Big|\int_\supp r(m\vee\theta)(Y_s(\theta)-\bar{Y}_s(\theta))\dmu\Big|^2+2\Big(\int_{\supp\cap[m,\infty)}|Y_s(\theta)-\bar{Y}_s(\theta)|\dmu\Big)^2\\
	&\leq2r(m)^{-2}\Big|\int_\supp r(m\vee\theta)(Y_s(\theta)-\bar{Y}_s(\theta))\dmu\Big|^2\\
	&\hspace{1cm}+2\int_{\supp\cap[m,\infty)}\theta^{-1}r(\theta)^{-1}\dmu\int_{\supp\cap[m,\infty)}\theta r(\theta)|Y_s(\theta)-\bar{Y}_s(\theta)|^2\dmu\\
	&\leq2r(m)^{-2}\Big|\int_\supp r(m\vee\theta)(Y_s(\theta)-\bar{Y}_s(\theta))\dmu\Big|^2\\
	&\hspace{1cm}+2\int_{\supp\cap[m,\infty)}r(\theta)\dmu\int_\supp\theta r(m\vee\theta)|Y_s(\theta)-\bar{Y}_s(\theta)|^2\dmu,
\end{align*}
a.s.\ for a.e.\ $s\geq0$, where the last inequality follows from the estimate $\theta^{-1}r(\theta)^{-1}\leq\theta^{-1/2}\leq r(\theta)$ for $\theta\geq1$. Hence, recalling that the constant $m\geq1$ satisfies \eqref{Harnack_eq_m}, we obtain
\begin{align*}
	&e^{\beta(t\wedge\tau_k)}\int_\supp r(m\vee\theta)|Y_{t\wedge\tau_k}(\theta)-\bar{Y}_{t\wedge\tau_k}(\theta)|^2\dmu\\
	&+\int^{t\wedge\tau_k}_0e^{\beta s}\int_\supp(\theta-\beta)r(m\vee\theta)|Y_s(\theta)-\bar{Y}_s(\theta)|^2\dmu\,\diff s\\
	&+\Xi(\mu,L,m,\lambda)\int^{t\wedge\tau_k}_0e^{\beta s}\Big|\int_\supp r(m\vee\theta)(Y_s(\theta)-\bar{Y}_s(\theta))\dmu\Big|^2\,\diff s\\
	&\leq\int_\supp r(m\vee\theta)|y(\theta)-\bar{y}(\theta)|^2\dmu\\
	&\hspace{0.5cm}+2\int^{t\wedge\tau_k}_0e^{\beta s}\Big\langle\int_\supp r(m\vee\theta)(Y_s(\theta)-\bar{Y}_s(\theta))\dmu,(\sigma(\mu[Y_s])-\sigma(\mu[\bar{Y}_s]))\,\diff\bar{W}^k_s\Big\rangle
\end{align*}
for any $t\geq0$ a.s., where
\begin{equation}\label{Harnack_eq_C}
	\Xi(\mu,L,m,\lambda):=2\lambda-1-2L^2\Big(1+\int_\supp r(\theta)\dmu\Big)r(m)^{-2}>0.
\end{equation} Furthermore, noting \eqref{Harnack_eq_r} and $\beta=\inf\supp$, we have
\begin{align*}
	&r(m)e^{\beta(t\wedge\tau_k)}\|Y_{t\wedge\tau_k}-\bar{Y}_{t\wedge\tau_k}\|^2_{\cH_\mu}\\
	&+\Xi(\mu,L,m,\lambda)\int^{t\wedge\tau_k}_0e^{\beta s}\Big|\int_\supp r(m\vee\theta)(Y_s(\theta)-\bar{Y}_s(\theta))\dmu\Big|^2\,\diff s\\
	&\leq\|y-\bar{y}\|^2_{\cH_\mu}+2\int^{t\wedge\tau_k}_0e^{\beta s}\Big\langle\int_\supp r(m\vee\theta)(Y_s(\theta)-\bar{Y}_s(\theta))\dmu,(\sigma(\mu[Y_s])-\sigma(\mu[\bar{Y}_s]))\,\diff\bar{W}^k_s\Big\rangle
\end{align*}
for any $t\geq0$ a.s. Now we take the expectations with respect to $\bQ_k$ (we denote by $\bE_{\bQ_k}$ the expectation under $\bQ_k$). Note that, by the definition of the stopping time $\tau_k$,
\begin{align*}
	&\bE_{\bQ_k}\Big[\int^{t\wedge\tau_k}_0e^{2\beta s}\Big|\int_\supp r(m\vee\theta)(Y_s(\theta)-\bar{Y}_s(\theta))\dmu\Big|^2|\sigma(\mu[Y_s])-\sigma(\mu[\bar{Y}_s])|^2\,\diff s\Big]\\
	&\leq e^{2\beta t}\int_\supp r(\theta)\dmu\,\bE_{\bQ_k}\Big[\int^{t\wedge\tau_k}_0\|Y_s-\bar{Y}_s\|^2_{\cH_\mu}|\sigma(\mu[Y_s])-\sigma(\mu[\bar{Y}_s])|^2\,\diff s\Big]\\
	&\leq4te^{2\beta t}k^2\|\sigma\|^2_\infty\int_\supp r(\theta)\dmu<\infty
\end{align*}
for any $t\geq0$. Thus, the expectation of the stochastic integral under $\bQ_k$ is zero, and we obtain
\begin{equation}\label{Harnack_eq_estimate}
\begin{split}
	&r(m)\bE_{\bQ_k}\big[e^{\beta(t\wedge\tau_k)}\|Y_{t\wedge\tau_k}-\bar{Y}_{t\wedge\tau_k}\|^2_{\cH_\mu}\big]\\
	&+\Xi(\mu,L,m,\lambda)\bE_{\bQ_k}\Big[\int^{t\wedge\tau_k}_0e^{\beta s}\Big|\int_\supp r(m\vee\theta)(Y_s(\theta)-\bar{Y}_s(\theta))\dmu\Big|^2\,\diff s\Big]\\
	&\leq\|y-\bar{y}\|^2_{\cH_\mu}
\end{split}
\end{equation}
for any $t\geq0$.

By the above estimate, noting that $\Xi(\mu,L,m,\lambda)>0$ and $\beta\geq0$, it holds that
\begin{equation*}
	\bE_{\bQ_k}\Big[\int^{t\wedge\tau_k}_0\Big|\int_\supp r(m\vee\theta)(Y_s(\theta)-\bar{Y}_s(\theta))\dmu\Big|^2\,\diff s\Big]\leq\frac{1}{\Xi(\mu,L,m,\lambda)}\|y-\bar{y}\|^2_{\cH_\mu}
\end{equation*}
for any $t\geq0$ and $k\in\bN$. Recall that the process $v$ is bounded on $[0,\tau_k]$, and $\bar{W}^k$ is a $\bQ_k$-Brownian motion. Thus, from the above observations, we have, for any $t\geq0$ and $k\in\bN$,
\begin{align*}
	\bE\big[R_{t\wedge\tau_k}\log R_{t\wedge\tau_k}\big]&=\bE_{\bQ_k}\big[\log R_{t\wedge\tau_k}\big]\\
	&=\bE_{\bQ_k}\Big[-\int^{t\wedge\tau_k}_0\langle v_s,\diff W_s\rangle-\frac{1}{2}\int^{t\wedge\tau_k}_0|v_s|^2\,\diff s\Big]\\
	&=\bE_{\bQ_k}\Big[-\int^{t\wedge\tau_k}_0\langle v_s,\diff \bar{W}^k_s\rangle+\frac{1}{2}\int^{t\wedge\tau_k}_0|v_s|^2\,\diff s\Big]\\
	&=\frac{1}{2}\bE_{\bQ_k}\Big[\int^{t\wedge\tau_k}_0|v_s|^2\,\diff s\Big]\\
	&\leq\frac{1}{2}\lambda^2\|\sigma^{-1}\|^2_\infty\bE_{\bQ_k}\Big[\int^{t\wedge\tau_k}_0\Big|\int_\supp r(m\vee\theta)(Y_s(\theta)-\bar{Y}_s(\theta))\dmu\Big|^2\,\diff s\Big]\\
	&\leq\frac{1}{2}\|\sigma^{-1}\|^2_\infty\frac{\lambda^2}{\Xi(\mu,L,m,\lambda)}\|y-\bar{y}\|^2_{\cH_\mu}.
\end{align*}
Recall also that the constant $\lambda=\lambda(\mu,L,m)$ is defined by \eqref{Harnack_eq_lambda} (which is the minimizer of the last term with respect to $\lambda$ such that $\Xi(\mu,L,m,\lambda)>0$; see \eqref{Harnack_eq_C}). Then we have
\begin{equation*}
	\bE\big[R_{t\wedge\tau_k}\log R_{t\wedge\tau_k}\big]\leq\frac{1}{2}\|\sigma^{-1}\|^2_\infty\Big(1+2L^2\Big(1+\int_\supp r(\theta)\dmu\Big)r(m)^{-2}\Big)\|y-\bar{y}\|^2_{\cH_\mu}
\end{equation*}
for any $t\geq0$ and any $k\in\bN$. In particular, the family $\{R_{t\wedge\tau_k}\}_{t\geq0,k\in\bN}$ is uniformly integrable under the probability measure $\bP$.

Let $0\leq s<t<\infty$ and $A\in\cF_s$ be given. Since each stopped process $R_{\cdot\wedge\tau_k}$ is a martingales under $\bP$, we have $\bE[R_{t\wedge\tau_k}\1_A]=\bE[R_{s\wedge\tau_k}\1_A]$ for any $k\in\bN$. Noting that $\{R_{t\wedge\tau_k}\}_{k\in\bN}$ and $\{R_{s\wedge\tau_k}\}_{k\in\bN}$ are uniformly integrable under $\bP$, by Vitali's convergence theorem, we have
\begin{equation*}
	\bE[R_t\1_A]=\lim_{k\to\infty}\bE[R_{t\wedge\tau_k}\1_A]=\lim_{k\to\infty}\bE[R_{s\wedge\tau_k}\1_A]=\bE[R_s\1_A].
\end{equation*}
Hencce, $R$ is a martingale under $\bP$. Furthermore, Fatou's lemma yields that
\begin{equation*}
	\bE\big[R_t\log R_t\big]\leq\liminf_{k\to\infty}\bE\big[R_{t\wedge\tau_k}\log R_{t\wedge\tau_k}\big]\leq\frac{1}{2}\|\sigma^{-1}\|^2_\infty\Big(1+2L^2\Big(1+\int_\supp r(\theta)\dmu\Big)r(m)^{-2}\Big)\|y-\bar{y}\|^2_{\cH_\mu}
\end{equation*}
for any $t\geq0$. Thus, the estimate \eqref{Harnack_eq_R} holds. Finally, this estimate implies that $R$ is uniformly integrable under $\bP$. This completes the proof.
\end{proof}

%% Proof

\begin{proof}[Proof of \cref{Harnack_theo_Harnack}]
By \cref{Harnack_lemm_R}, we see that the limit $R_\infty:=\lim_{t\to\infty}R_t$ exists in $L^1(\bP)$, and the measure $\bQ$ on $(\Omega,\cF)$ defined by $\frac{\diff\bQ}{\diff\bP}:=R_\infty$ is a probability measure. In this proof, we denote by $\bE$ and $\bE_\bQ$ the expectations under $\bP$ and $\bQ$, respectively. By Girsanov's theorem, the process
\begin{equation*}
	\bar{W}_t:=W_t+\int^t_0v_s\,\diff s,\ t\geq0,
\end{equation*}
is a $d$-dimensional Brownian motion under $\bQ$. Observe that $\bar{Y}$ is a mild solution of the SEE
\begin{equation*}
	\begin{dcases}
	\diff\bar{Y}_t(\theta)=-\theta\bar{Y}_t(\theta)\,\diff t+b(\mu[\bar{Y}_t])\,\diff t+\sigma(\mu[\bar{Y}_t])\,\diff\bar{W}_t,\ t>0,\ \theta\in\supp,\\
	\bar{Y}_0(\theta)=\bar{y}(\theta),\ \theta\in\supp.
	\end{dcases}
\end{equation*}
By the uniqueness in law of the SEE \eqref{lift_eq_SEE}, which follows from the pathwise uniqueness and the Yamada--Watanabe theorem (see \cite{Ku14} for the general form of the Yamada--Watanabe theorem), we see that the law of $\bar{Y}$ under $\bQ$ coincides with the law of $Y^{\bar{y}}$ under $\bP$. Furthermore, by \eqref{Harnack_eq_estimate}, we have (with the same notations defined in the proof of \cref{Harnack_lemm_R})
\begin{equation*}
	\bE_\bQ\big[e^{\beta(t\wedge\tau_k)}\|Y_{t\wedge\tau_k}-\bar{Y}_{t\wedge\tau_k}\|^2_{\cH_\mu}\big]=\bE_{\bQ_k}\big[e^{\beta(t\wedge\tau_k)}\|Y_{t\wedge\tau_k}-\bar{Y}_{t\wedge\tau_k}\|^2_{\cH_\mu}\big]\leq r(m)^{-1}\|y-\bar{y}\|^2_{\cH_\mu}
\end{equation*}
for any $t\geq0$ and any $k\in\bN$. Since $\lim_{k\to\infty}\tau_k=\infty$ a.s., by Fatou's lemma, we obtain
\begin{equation*}
	\bE_\bQ\big[e^{\beta t}\|Y_t-\bar{Y}_t\|^2_{\cH_\mu}\big]\leq\liminf_{k\to\infty}\bE_\bQ\big[e^{\beta(t\wedge\tau_k)}\|Y_{t\wedge\tau_k}-\bar{Y}_{t\wedge\tau_k}\|^2_{\cH_\mu}\big]\leq r(m)^{-1}\|y-\bar{y}\|^2_{\cH_\mu},
\end{equation*}
and thus,
\begin{equation}\label{Harnack_eq_remainder}
	\bE_\bQ\big[\|Y_t-\bar{Y}_t\|_{\cH_\mu}\big]\leq\bE_\bQ\big[\|Y_t-\bar{Y}_t\|^2_{\cH_\mu}\big]^{1/2}\leq r(m)^{-1/2}e^{-\beta t/2}\|y-\bar{y}\|_{\cH_\mu}
\end{equation}
for any $t\geq0$.

Consequently, by \eqref{Harnack_eq_R} and \eqref{Harnack_eq_remainder}, for any $t\geq0$ and any $f\in\Bb$ such that $f\geq1$ and $\|\nabla\log f\|_\infty<\infty$, we have
\begin{align*}
	P_t\log f(\bar{y})&=\bE\big[\log f(Y^{\bar{y}}_t)\big]=\bE_\bQ\big[\log f(\bar{Y}_t)\big]=\bE\big[R_t\log f(Y_t)\big]-\bE_\bQ\big[\log f(Y_t)-\log f(\bar{Y}_t)\big]\\
	&\leq\log\bE\big[f(Y_t)\big]+\bE\big[R_t\log R_t\big]+\bE_\bQ\big[\|Y_t-\bar{Y}_t\|_{\cH_\mu}\big]\|\nabla\log f\|_\infty\\
	&\leq\log P_tf(y)+\frac{1}{2}\|\sigma^{-1}\|^2_\infty\Big(1+2L^2\Big(1+\int_\supp r(\theta)\dmu\Big)r(m)^{-2}\Big)\|y-\bar{y}\|^2_{\cH_\mu}\\
	&\hspace{3cm}+r(m)^{-1/2}e^{-\beta t/2}\|y-\bar{y}\|_{\cH_\mu}\|\nabla\log f\|_\infty.
\end{align*}
This completes the proof of \cref{Harnack_theo_Harnack}.
\end{proof}

%%%%%%%%%%%%%%%%%%%%%%%%%%%%%%%%%%
%%%%%%%%%%%%%%%%%%%%%%%%%%%%%%%%%%
%% Appendix
%%%%%%%%%%%%%%%%%%%%%%%%%%%%%%%%%%
%%%%%%%%%%%%%%%%%%%%%%%%%%%%%%%%%%

\appendix
\setcounter{theo}{0}
\setcounter{equation}{0}

\section{Appendix}\label{appendix}

In this appendix, we provide proofs of the well-posedness of the generalized SEE \eqref{lift_eq_general-SEE}.

%%%%%%%%%%%%%%%%%
%% Subsection
%%%%%%%%%%%%%%%%%

\subsection{Proof of \cref{lift_theo_bound}: A priori bound of mild solutions}\label{appendix_1}

Suppose that \cref{lift_assum_kernel} holds. Let $b:\Omega\times[0,\infty)\times\bR^n\times\cH_\mu\to\bR^n$ and $\sigma:\Omega\times[0,\infty)\times\bR^n\times\cH_\mu\to\bR^{n\times d}$ satisfy \cref{lift_assum_general-SEE} (i) (the measurability condition) and (ii) (the linear growth condition).

%% Proof

\begin{proof}[Proof of \cref{lift_theo_bound}]
Suppose that $Y$ is a mild solution of the generalized SEE \eqref{lift_eq_general-SEE} with the initial condition $\eta\in L^2_{\cF_0}(\cH_\mu)$. In this proof, we denote $X:=\mu[Y]$, which is an $\bR^n$-valued predicable process and satisfies $X_t=\int_\supp Y_t(\theta)\dmu$ a.s.\ for a.e.\ $t>0$. By \cref{lift_lemm_space} (iii), we have $\int^T_0|X_t|^2\,\diff t<\infty$ a.s.\ for any $T>0$. Noting \cref{lift_rem_Ito}, we may assume that, for each $\theta\in\supp$, the process $Y(\theta)$ is an $\bR^n$-valued It\^{o} process satisfying
\begin{equation*}
	\begin{dcases}
	\diff Y_t(\theta)=-\theta Y_t(\theta)\,\diff t+b(t,X_t,Y_t)\,\diff t+\sigma(t,X_t,Y_t)\,\diff W_t,\ t>0,\\
	Y_0(\theta)=\eta(\theta).
	\end{dcases}
\end{equation*}
Let $\lambda>0$ be a constant, which we will identify later. By It\^{o}'s formula, we have
\begin{align*}
	\diff e^{-\lambda t}|Y_t(\theta)|^2=&-\lambda e^{-\lambda t}|Y_t(\theta)|^2\,\diff t-2e^{-\lambda t}\theta|Y_t(\theta)|^2\,\diff t+2e^{-\lambda t}\langle Y_t(\theta),b(t,X_t,Y_t)\rangle\,\diff t+e^{-\lambda t}|\sigma(t,X_t,Y_t)|^2\,\diff t\\
	&+2e^{-\lambda t}\langle Y_t(\theta),\sigma(t,X_t,Y_t)\,\diff W_t\rangle.
\end{align*}
We integrate both sides with respect to $r(\theta)\dmu$. Since
\begin{align*}
	&\int_\supp\Big(\int^T_0e^{-2\lambda t}|Y_t(\theta)|^2|\sigma(t,X_t,Y_t)|^2\,\diff t\Big)^{1/2}\,r(\theta)\dmu\\
	&\leq\Big(\int_\supp r(\theta)\dmu\Big)^{1/2}\Big(\int_\supp\int^T_0e^{-2\lambda t}|Y_t(\theta)|^2|\sigma(t,X_t,Y_t)|^2\,\diff t\,r(\theta)\dmu\Big)^{1/2}\\
	&=\Big(\int_\supp r(\theta)\dmu\Big)^{1/2}\Big(\int^T_0e^{-2\lambda t}\|Y_t\|^2_{\cH_\mu}|\sigma(t,X_t,Y_t)|^2\,\diff t\Big)^{1/2}\\
	&\leq\frac{1}{2}\Big(\int_\supp r(\theta)\dmu\Big)^{1/2}\Big\{\sup_{t\in[0,T]}\|Y_t\|^2_{\cH_\mu}+\int^T_0|\sigma(t,X_t,Y_t)|^2\,\diff t\Big\}\\
	&<\infty\ \text{a.s.}
\end{align*}
for any $T>0$, we can use the stochastic Fubini theorem (cf.\ \cite{Ve12}) and obtain
\begin{align*}
	&\diff e^{-\lambda t}\int_\supp r(\theta)|Y_t(\theta)|^2\dmu\\
	&=-\lambda e^{-\lambda t}\int_\supp r(\theta)|Y_t(\theta)|^2\dmu\,\diff t-2e^{-\lambda t}\int_\supp\theta r(\theta)|Y_t(\theta)|^2\dmu\,\diff t\\
	&\hspace{0.5cm}+2e^{-\lambda t}\Big\langle\int_\supp r(\theta)Y_t(\theta)\dmu,b(t,X_t,Y_t)\Big\rangle\,\diff t+e^{-\lambda t}\int_\supp r(\theta)\dmu\,|\sigma(t,X_t,Y_t)|^2\,\diff t\\
	&\hspace{0.5cm}+2e^{-\lambda t}\Big\langle\int_\supp r(\theta)Y_t(\theta)\dmu,\sigma(t,X_t,Y_t)\,\diff W_t\Big\rangle,
\end{align*}
and thus
\begin{align*}
	e^{-\lambda t}\|Y_t\|^2_{\cH_\mu}&=\|\eta\|^2_{\cH_\mu}+\int^t_0e^{-\lambda s}\Big\{-(\lambda-2)\|Y_s\|^2_{\cH_\mu}-2\|Y_s\|^2_{\cV_\mu}+2\Big\langle\int_\supp r(\theta)Y_s(\theta)\dmu,b(s,X_s,Y_s)\Big\rangle\\
	&\hspace{4cm}+\int_\supp r(\theta)\dmu\,|\sigma(s,X_s,Y_s)|^2\Big\}\,\diff s\\
	&\hspace{1cm}+2\int^t_0e^{-\lambda s}\Big\langle\int_\supp r(\theta)Y_s(\theta)\dmu,\sigma(s,X_s,Y_s)\,\diff W_s\Big\rangle
\end{align*}
for any $t\geq0$ a.s. Noting that
\begin{equation*}
	\Big|\int_\supp r(\theta)Y_s(\theta)\dmu\Big|\leq\Big(\int_\supp r(\theta)\dmu\Big)^{1/2}\|Y_s\|_{\cH_\mu}
\end{equation*}
and using the inequality $2ab\leq a^2+b^2$, we get
\begin{equation}\label{lift_eq_Ito}
\begin{split}
	e^{-\lambda t}\|Y_t\|^2_{\cH_\mu}&\leq\|\eta\|^2_{\cH_\mu}+\int^t_0e^{-\lambda s}\Big\{-(\lambda-3)\|Y_s\|^2_{\cH_\mu}-2\|Y_s\|^2_{\cV_\mu}\\
	&\hspace{3.5cm}+\int_\supp r(\theta)\dmu\,\big(|b(s,X_s,Y_s)|^2+|\sigma(s,X_s,Y_s)|^2\big)\Big\}\,\diff s\\
	&\hspace{1cm}+2\int^t_0e^{-\lambda s}\Big\langle\int_\supp r(\theta)Y_s(\theta)\dmu,\sigma(s,X_s,Y_s)\,\diff W_s\Big\rangle
\end{split}
\end{equation}
for any $t\geq0$ a.s. By \cref{lift_assum_general-SEE} (ii) (the linear growth condition), we have
\begin{equation*}
	|b(s,X_s,Y_s)|^2+|\sigma(s,X_s,Y_s)|^2\leq 3\varphi^2_s+3c^2_\LG\big\{|X_s|^2+\|Y_s\|^2_{\cH_\mu}\big\}.
\end{equation*}
Furthermore, by \cref{lift_lemm_space} (iii), there exists a constant $M=M(\mu,c_\LG)>0$ such that
\begin{equation*}
	|X_s|^2=\Big|\int_\supp Y_s(\theta)\dmu\Big|^2\leq\frac{1}{3c^2_\LG\int_\supp r(\theta)\dmu}\|Y_s\|^2_{\cV_\mu}+M\|Y_s\|^2_{\cH_\mu}
\end{equation*}
a.s.\ for a.e.\ $s>0$. Hence, we obtain
\begin{align*}
	e^{-\lambda t}\|Y_t\|^2_{\cH_\mu}&\leq\|\eta\|^2_{\cH_\mu}+\int^t_0e^{-\lambda s}\Big\{-\Big(\lambda-3-3c^2_\LG(M+1)\int_\supp r(\theta)\dmu\Big)\|Y_s\|^2_{\cH_\mu}-\|Y_s\|^2_{\cV_\mu}\\
	&\hspace{3.5cm}+3\int_\supp r(\theta)\dmu\,\varphi^2_s\Big\}\,\diff s\\
	&\hspace{1cm}+2\int^t_0e^{-\lambda s}\Big\langle\int_\supp r(\theta)Y_s(\theta)\dmu,\sigma(s,X_s,Y_s)\,\diff W_s\Big\rangle
\end{align*}
for any $t\geq0$ a.s. Now we take the constant $\lambda=\lambda(\mu,c_\LG)>0$ by
\begin{equation*}
	\lambda=3+3c^2_\LG(M+1)\int_\supp r(\theta)\dmu.
\end{equation*}
Then we obtain
\begin{equation}\label{lift_eq_estimate0}
\begin{split}
	&e^{-\lambda t}\|Y_t\|^2_{\cH_\mu}+\int^t_0e^{-\lambda s}\|Y_s\|^2_{\cV_\mu}\,\diff s\\
	&\leq\|\eta\|^2_{\cH_\mu}+3\int_\supp r(\theta)\dmu\int^t_0e^{-\lambda s}\varphi^2_s\,\diff s+2\int^t_0e^{-\lambda s}\Big\langle\int_\supp r(\theta)Y_s(\theta)\dmu,\sigma(s,X_s,Y_s)\,\diff W_s\Big\rangle
\end{split}
\end{equation}
for any $t\geq0$ a.s.

Let $T>0$ and a stopping time $\tau$ be fixed. For each $k\in\bN$, define the stopping time
\begin{equation*}
	\tau_k:=\inf\left\{t\geq0\relmiddle|\|Y_t\|_{\cH_\mu}>k\ \text{or}\ \int^t_0|\sigma(s,X_s,Y_s)|^2\,\diff s>k\right\}\wedge\tau.
\end{equation*}
Then we have $\tau_k\leq\tau_{k+1}\leq\tau$ for any $k\in\bN$ and $\lim_{k\to\infty}\tau_k=\tau$ a.s. We fix $k\in\bN$. Noting that the stopped process
\begin{equation*}
	\int^{\cdot\wedge\tau_k}_0e^{-\lambda s}\Big\langle\int_\supp r(\theta)Y_s(\theta)\dmu,\sigma(s,X_s,Y_s)\,\diff W_s\Big\rangle
\end{equation*}
is a martingale, by taking the expectations, we get
\begin{equation}\label{lift_eq_estimate1}
	\bE\Big[\int^{T\wedge\tau_k}_0e^{-\lambda t}\|Y_t\|^2_{\cV_\mu}\,\diff t\Big]\leq\bE\Big[\|\eta\|^2_{\cH_\mu}+3\int_\supp r(\theta)\dmu\int^{T\wedge\tau_k}_0e^{-\lambda t}\varphi^2_t\,\diff t\Big].
\end{equation}
Furthermore, by \eqref{lift_eq_estimate0} and the Burkholder--Davis--Gundy inequality,
\begin{align*}
	\bE\Big[\sup_{t\in[0,T]}e^{-\lambda(t\wedge\tau_k)}\|Y_{t\wedge\tau_k}\|^2_{\cH_\mu}\Big]&\leq\bE\Big[\|\eta\|^2_{\cH_\mu}+3\int_\supp r(\theta)\dmu\int^{T\wedge\tau_k}_0e^{-\lambda t}\varphi^2_t\,\diff t\Big]\\
	&\hspace{0.5cm}+2\bE\Big[\sup_{t\in[0,T]}\Big|\int^{t\wedge\tau_k}_0e^{-\lambda s}\Big\langle\int_\supp r(\theta)Y_s(\theta)\dmu,\sigma(s,X_s,Y_s)\,\diff W_s\Big\rangle\Big|\Big]\\
	&\leq\bE\Big[\|\eta\|^2_{\cH_\mu}+3\int_\supp r(\theta)\dmu\int^{T\wedge\tau_k}_0e^{-\lambda t}\varphi^2_t\,\diff t\Big]\\
	&\hspace{0.5cm}+2c_\BDG\bE\Big[\Big(\int^{T\wedge\tau_k}_0e^{-2\lambda t}\Big|\int_\supp r(\theta)Y_t(\theta)\dmu\Big|^2|\sigma(t,X_t,Y_t)|^2\,\diff t\Big)^{1/2}\Big],
\end{align*}
where $c_\BDG>0$ is the constant appearing in the Burkholder--Davis--Gundy inequality. The last term can be estimated as follows:
\begin{align*}
	&2c_\BDG\bE\Big[\Big(\int^{T\wedge\tau_k}_0e^{-2\lambda t}\Big|\int_\supp r(\theta)Y_t(\theta)\dmu\Big|^2|\sigma(t,X_t,Y_t)|^2\,\diff t\Big)^{1/2}\Big]\\
	&\leq2c_\BDG\Big(\int_\supp r(\theta)\dmu\Big)^{1/2}\bE\Big[\Big(\int^{T\wedge\tau_k}_0e^{-2\lambda t}\|Y_t\|^2_{\cH_\mu}|\sigma(t,X_t,Y_t)|^2\,\diff t\Big)^{1/2}\Big]\\
	&\leq2c_\BDG\Big(\int_\supp r(\theta)\dmu\Big)^{1/2}\bE\Big[\Big(\sup_{t\in[0,T]}e^{-\lambda(t\wedge\tau_k)}\|Y_{t\wedge\tau_k}\|^2_{\cH_\mu}\Big)^{1/2}\Big(\int^{T\wedge\tau_k}_0e^{-\lambda t}|\sigma(t,X_t,Y_t)|^2\,\diff t\Big)^{1/2}\Big]\\
	&\leq\frac{1}{2}\bE\Big[\sup_{t\in[0,T]}e^{-\lambda(t\wedge\tau_k)}\|Y_{t\wedge\tau_k}\|^2_{\cH_\mu}\Big]+2c^2_\BDG\int_\supp r(\theta)\dmu\,\bE\Big[\int^{T\wedge\tau_k}_0e^{-\lambda t}|\sigma(t,X_t,Y_t)|^2\,\diff t\Big].
\end{align*}
Hence,
\begin{align*}
	\bE\Big[\sup_{t\in[0,T]}e^{-\lambda(t\wedge\tau_k)}\|Y_{t\wedge\tau_k}\|^2_{\cH_\mu}\Big]&\leq2\bE\Big[\|\eta\|^2_{\cH_\mu}+3\int_\supp r(\theta)\dmu\int^{T\wedge\tau_k}_0e^{-\lambda t}\varphi^2_t\,\diff t\\
	&\hspace{1cm}+2c^2_\BDG\int_\supp r(\theta)\dmu\int^{T\wedge\tau_k}_0e^{-\lambda t}|\sigma(t,X_t,Y_t)|^2\,\diff t\Big].
\end{align*}
Again by using \cref{lift_assum_general-SEE} (ii) (the linear growth condition), \cref{lift_lemm_space} (iii), and the fact that $\|\cdot\|_{\cH_\mu}\leq\|\cdot\|_{\cV_\mu}$, we have
\begin{align*}
	|\sigma(t,X_t,Y_t)|^2&\leq3\varphi^2_t+3c^2_\LG\big\{|X_t|^2+\|Y_t\|^2_{\cH_\mu}\big\}\\
	&=3\varphi^2_t+3c^2_\LG\Big\{\Big|\int_\supp Y_t(\theta)\dmu\Big|^2+\|Y_t\|^2_{\cH_\mu}\Big\}\\
	&\leq3\varphi^2_t+3c^2_\LG\Big(\int_\supp r(\theta)\dmu+1\Big)\|Y_t\|^2_{\cV_\mu}
\end{align*}
a.s.\ for a.e.\ $t>0$. Thus,
\begin{equation}\label{lift_eq_estimate2}
\begin{split}
	&\bE\Big[\sup_{t\in[0,T]}e^{-\lambda(t\wedge\tau_k)}\|Y_{t\wedge\tau_k}\|^2_{\cH_\mu}\Big]\\
	&\leq2\bE\Big[\|\eta\|^2_{\cH_\mu}+(3+6c^2_\BDG)\int_\supp r(\theta)\dmu\int^{T\wedge\tau_k}_0e^{-\lambda t}\varphi^2_t\,\diff t\\
	&\hspace{1cm}+6c^2_\BDG c^2_\LG\Big(\int_\supp r(\theta)\dmu+1\Big)^2\int^{T\wedge\tau_k}_0e^{-\lambda t}\|Y_t\|^2_{\cV_\mu}\,\diff t\Big].
	\end{split}
\end{equation}
From \eqref{lift_eq_estimate1} and \eqref{lift_eq_estimate2}, we see that there exists a constant $C=C(\mu,c_\LG)>0$ such that
\begin{equation*}
	\bE\Big[\sup_{t\in[0,T]}\|Y_{t\wedge\tau_k}\|^2_{\cH_\mu}+\int^{T\wedge\tau_k}_0\|Y_t\|^2_{\cV_\mu}\,\diff t\Big]\leq Ce^{CT}\bE\Big[\|\eta\|^2_{\cH_\mu}+\int^{T\wedge\tau_k}_0\varphi^2_t\,\diff t\Big]\leq Ce^{CT}\bE\Big[\|\eta\|^2_{\cH_\mu}+\int^{T\wedge\tau}_0\varphi^2_t\,\diff t\Big].
\end{equation*}
Since $\lim_{k\to\infty}\tau_k=\tau$ a.s., by Fatou's lemma, we get the assertion.
\end{proof}

%%%%%%%%%%%%%%%%%
%% Subsection
%%%%%%%%%%%%%%%%%

\subsection{Proof of \cref{lift_theo_global-Lip}: Existence, uniqueness and stability; the global Lipschitz case}\label{appendix_2}

Suppose that \cref{lift_assum_kernel} holds. Let $b:\Omega\times[0,\infty)\times\bR^n\times\cH_\mu\to\bR^n$ and $\sigma:\Omega\times[0,\infty)\times\bR^n\times\cH_\mu\to\bR^{n\times d}$ satisfy \cref{lift_assum_general-SEE} (i) (the measurability condition), (ii) (the linear growth condition) and (iii) (the global Lipschitz condition).

%% Proof

\begin{proof}[Proof of \cref{lift_theo_global-Lip}]
First, we show the existence and uniqueness of the mild solution. Let $\eta\in L^2_{\cF_0}(\cH_\mu)$ be given. Fix $T>0$, and denote by $\cL_T$ the space of equivalent classes of $\cH_\mu$-valued continuous and adapted processes $Y$ on $[0,T]$ such that
\begin{equation*}
	\|Y\|_T:=\bE\Big[\sup_{t\in[0,T]}\|Y_t\|^2_{\cH_\mu}+\int^T_0\|Y_t\|^2_{\cV_\mu}\,\diff t\Big]^{1/2}<\infty.
\end{equation*}
Then $(\cL_T,\|\cdot\|_T)$ is a Banach space. Note that, by \cref{lift_theo_bound}, any mild solution belongs to $\cL_T$. Then, by \cref{lift_assum_general-SEE} (ii) (the linear growth condition) and \cref{lift_lemm_convolution}, the $\cH_\mu$-valued process $\hat{Y}$ defined by
\begin{equation*}
	\hat{Y}_t:=e^{-\cdot t}\eta(\cdot)+\int^t_0e^{-\cdot(t-s)}b(s,\mu[Y_s],Y_s)\,\diff s+\int^t_0e^{-\cdot(t-s)}\sigma(s,\mu[Y_s],Y_s)\,\diff W_s,\ t\in[0,T],
\end{equation*}
belongs to $\cL_T$. Thus, we have a map $\Phi:\cL_T\to\cL_T$ given by $\Phi(Y):=\hat{Y}$. To show the existence and uniqueness of the mild solution of the generalized SEE \eqref{lift_eq_general-SEE}, it suffices to show that $\Phi$ has a unique fixed point in $\cL_T$. For this purpose, let $\lambda,\kappa>0$ be given, and define
\begin{equation*}
	\|Y\|_{T,\lambda,\kappa}:=\bE\Big[\sup_{t\in[0,T]}e^{-\lambda t}\|Y_t\|^2_{\cH_\mu}+\int^T_0e^{-\lambda t}\big\{\kappa\|Y_t\|^2_{\cH_\mu}+\|Y_t\|^2_{\cV_\mu}\big\}\,\diff t\Big]^{1/2}
\end{equation*}
for $Y\in\cL_T$. Clearly, $\|\cdot\|_{T,\lambda,\kappa}$ is a norm on $\cL_T$, and it is equivalent to the original norm $\|\cdot\|_T$. We shall show that, for some $\lambda,\kappa>0$ large enough, the map $\Phi$ is contractive with respect to the norm $\|\cdot\|_{T,\lambda,\kappa}$. Then by the Banach fixed point theorem we see that $\Phi$ has a unique fixed point in $\cL_T$. In the following, we take $\lambda>3$.

To show that $\Phi$ is contractive with respect to $\|\cdot\|_{T,\lambda,\kappa}$ for some $\lambda>3$ and $\kappa>0$, let $Y^1,Y^2\in\cL_T$, and denote $X^i:=\mu[Y^i]$ and $\hat{Y}^i:=\Phi(Y^i)$ for $i=1,2$. Then for $i=1,2$ and for each $\theta\in\supp$, the process $\hat{Y}^i(\theta)$ satisfies
\begin{equation*}
	\begin{dcases}
	\diff\hat{Y}^i_t(\theta)=-\theta\hat{Y}^i_t(\theta)\,\diff t+b(t,X^i_t,Y^i_t)\,\diff t+\sigma(t,X^i_t,Y^i_t)\,\diff W_t,\ t\in[0,T],\\
	\hat{Y}^i_0(\theta)=\eta(\theta).
	\end{dcases}
\end{equation*}
Then, by the same calculations as \eqref{lift_eq_Ito} in the proof of \cref{lift_theo_bound}, we obtain
\begin{align*}
	&e^{-\lambda t}\|\hat{Y}^1_t-\hat{Y}^2_t\|^2_{\cH_\mu}\\
	&\leq\int^t_0e^{-\lambda s}\Big\{-(\lambda-3)\|\hat{Y}^1_s-\hat{Y}^2_s\|^2_{\cH_\mu}-2\|\hat{Y}^1_s-\hat{Y}^2_s\|^2_{\cV_\mu}\\
	&\hspace{1.5cm}+\int_\supp r(\theta)\dmu\,\big(|b(s,X^1_s,Y^1_s)-b(s,X^2_s,Y^2_s)|^2+|\sigma(s,X^1_s,Y^1_s)-\sigma(s,X^2_s,Y^2_s)|^2\big)\Big\}\,\diff s\\
	&\hspace{0.5cm}+2\int^t_0e^{-\lambda s}\Big\langle\int_\supp r(\theta)\big(\hat{Y}^1_s(\theta)-\hat{Y}^2_s(\theta)\big)\dmu,\big(\sigma(s,X^1_s,Y^1_s)-\sigma(s,X^2_s,Y^2_s)\big)\,\diff W_s\Big\rangle
\end{align*}
for $t\in[0,T]$. On the one hand, taking the expectations, we have
\begin{align*}
	&\bE\Big[\int^T_0e^{-\lambda t}\Big\{(\lambda-3)\|\hat{Y}^1_t-\hat{Y}^2_t\|^2_{\cH_\mu}+2\|\hat{Y}^1_t-\hat{Y}^2_t\|^2_{\cV_\mu}\Big\}\,\diff t\Big]\\
	&\leq\int_\supp r(\theta)\dmu\,\bE\Big[\int^T_0e^{-\lambda t}\big\{|b(t,X^1_t,Y^1_t)-b(t,X^2_t,Y^2_t)|^2+|\sigma(t,X^1_t,Y^1_t)-\sigma(t,X^2_t,Y^2_t)|^2\big\}\,\diff t\Big].
\end{align*}
On the other hand, noting that $\lambda>3$, by the Burkholder--Davis--Gundy inequality,
\begin{align*}
	&\bE\Big[\sup_{t\in[0,T]}e^{-\lambda t}\|\hat{Y}^1_t-\hat{Y}^2_t\|^2_{\cH_\mu}\Big]\\
	&\leq\int_\supp r(\theta)\dmu\,\bE\Big[\int^T_0e^{-\lambda t}\big\{|b(t,X^1_t,Y^1_t)-b(t,X^2_t,Y^2_t)|^2+|\sigma(t,X^1_t,Y^1_t)-\sigma(t,X^2_t,Y^2_t)|^2\big\}\,\diff t\Big]\\
	&\hspace{0.5cm}+2c_\BDG\bE\Big[\Big(\int^T_0e^{-2\lambda t}\Big|\int_\supp r(\theta)(\hat{Y}^1_t(\theta)-\hat{Y}^2_t(\theta))\dmu\Big|^2|\sigma(t,X^1_t,Y^1_t)-\sigma(t,X^2_t,Y^2_t)|^2\,\diff t\Big)^{1/2}\Big]\\
	&\leq\int_\supp r(\theta)\dmu\,\bE\Big[\int^T_0e^{-\lambda t}\big\{|b(t,X^1_t,Y^1_t)-b(t,X^2_t,Y^2_t)|^2+|\sigma(t,X^1_t,Y^1_t)-\sigma(t,X^2_t,Y^2_t)|^2\big\}\,\diff t\Big]\\
	&\hspace{0.5cm}+2c^2_\BDG\int_\supp r(\theta)\dmu\,\bE\Big[\int^T_0e^{-\lambda t}|\sigma(t,X^1_t,Y^1_t)-\sigma(t,X^2_t,Y^2_t)|^2\,\diff t\Big]\\
	&\hspace{0.5cm}+\frac{1}{2}\bE\Big[\sup_{t\in[0,T]}e^{-\lambda t}\|\hat{Y}^1_t-\hat{Y}^2_t\|^2_{\cH_\mu}\Big].
\end{align*}
Thus, we get
\begin{align*}
	&\bE\Big[2\sup_{t\in[0,T]}e^{-\lambda t}\|\hat{Y}^1_t-\hat{Y}^2_t\|^2_{\cH_\mu}+\int^T_0e^{-\lambda t}\Big\{(\lambda-3)\|\hat{Y}^1_t-\hat{Y}^2_t\|^2_{\cH_\mu}+2\|\hat{Y}^1_t-\hat{Y}^2_t\|^2_{\cV_\mu}\Big\}\,\diff t\Big]\\
	&\leq(5+8c^2_\BDG)\int_\supp r(\theta)\dmu\,\bE\Big[\int^T_0e^{-\lambda t}\big\{|b(t,X^1_t,Y^1_t)-b(t,X^2_t,Y^2_t)|^2\\
	&\hspace{7cm}+|\sigma(t,X^1_t,Y^1_t)-\sigma(t,X^2_t,Y^2_t)|^2\big\}\,\diff t\Big].
\end{align*}
By \cref{lift_assum_general-SEE} (iii) (the global Lipschitz condition), we have
\begin{equation*}
	|b(t,X^1_t,Y^1_t)-b(t,X^2_t,Y^2_t)|^2+|\sigma(t,X^1_t,Y^1_t)-\sigma(t,X^2_t,Y^2_t)|^2\leq 2L^2\big\{|X^1_t-X^2_t|^2+\|Y^1_t-Y^2_t\|^2_{\cH_\mu}\big\}.
\end{equation*}
Furthermore, by \cref{lift_lemm_space} (iii), there exists a constant $M=M(\mu,L)>0$ such that
\begin{align*}
	|X^1_t-X^2_t|^2&=\Big|\int_\supp\big(Y^1_t(\theta)-Y^2_t(\theta)\big)\dmu\Big|^2\\
	&\leq\frac{1}{2L^2(5+8c^2_\BDG)\int_\supp r(\theta)\dmu}\|Y^1_t-Y^2_t\|^2_{\cV_\mu}+M\|Y^1_t-Y^2_t\|^2_{\cH_\mu}
\end{align*}
a.s.\ for a.e.\ $t\in(0,T]$.
Then we obtain
\begin{align*}
	&\bE\Big[2\sup_{t\in[0,T]}e^{-\lambda t}\|\hat{Y}^1_t-\hat{Y}^2_t\|^2_{\cH_\mu}+\int^T_0e^{-\lambda t}\Big\{(\lambda-3)\|\hat{Y}^1_t-\hat{Y}^2_t\|^2_{\cH_\mu}+2\|\hat{Y}^1_t-\hat{Y}^2_t\|^2_{\cV_\mu}\Big\}\,\diff t\Big]\\
	&\leq\bE\Big[\int^T_0e^{-\lambda t}\Big\{2L^2(M+1)(5+8c^2_\BDG)\int_\supp r(\theta)\dmu\,\|Y^1_t-Y^2_t\|^2_{\cH_\mu}+\|Y^1_t-Y^2_t\|^2_{\cV_\mu}\Big\}\,\diff t\Big].
\end{align*}
Thus, by taking $\kappa=2L^2(M+1)(5+8c^2_\BDG)\int_\supp r(\theta)\dmu>0$ and $\lambda=3+2\kappa>3$, we have
\begin{equation*}
	\|\hat{Y}^1-\hat{Y}^2\|^2_{T,\lambda,\kappa}\leq\frac{1}{2}\|Y^1-Y^2\|^2_{T,\lambda,\kappa}.
\end{equation*}
Therefore, the map $\Phi$ is contractive with respect to $\|\cdot\|_{T,\lambda,\kappa}$, and we see that there exists a unique mild solution of the generalized SEE \eqref{lift_eq_general-SEE}.

The assertion (ii) follows from \cref{lift_theo_bound}. Indeed, if we set $\tilde{b}:\Omega\times[0,\infty)\times\bR^n\times\cH_\mu\to\bR^n$ and $\tilde{\sigma}:\Omega\times[0,\infty)\times\bR^n\times\cH_\mu\to\bR^{n\times d}$ by
\begin{equation*}
	\tilde{b}(t,x,y):=b(t,x+\mu[\bar{Y}_t],y+\bar{Y}_t)-\bar{b}_t\ \text{and}\ \tilde{\sigma}(t,x,y):=\sigma(t,x+\mu[\bar{Y}_t],y+\bar{Y}_t)-\bar{\sigma}_t,
\end{equation*}
then $\tilde{b}$ and $\tilde{\sigma}$ satisfy \cref{lift_assum_general-SEE} (i) (the measurability condition) and (ii) (the linear growth condition) with $c_\LG$ and $\varphi$ replaced by $L$ and $|b(t,\mu[\bar{Y}_t],\bar{Y}_t)-\bar{b}_t|+|\sigma(t,\mu[\bar{Y}_t],\bar{Y}_t)-\bar{\sigma}_t|$, respectively. Since $\tilde{Y}:=Y-\bar{Y}$ is a mild solution of the generalized SEE \eqref{lift_eq_general-SEE} with the coefficients $\tilde{b}$ and $\tilde{\sigma}$ and the initial condition $\tilde{\eta}:=\eta-\bar{\eta}\in L^2_{\cF_0}(\cH_\mu)$, we obtain the assertion (ii) by \cref{lift_theo_bound}. This completes the proof.
\end{proof}

%% Remark

\begin{rem}\label{appendix_rem_fixed-point}
From the above proof, the Banach fixed point theorem yields that the unique mild solution $Y$ to the generalized SEE \eqref{lift_eq_general-SEE} with the initial condition $\eta\in L^2_{\cF_0}(\cH_\mu)$ can be approximated by the iterations of the map $\Phi$ under the norm $\|\cdot\|_{T,\lambda,\kappa}$ with $\lambda,\kappa>0$ specified in the above proof. Namely, by setting $Y^0:=0$ and $Y^k:=\Phi(Y^{k-1})$ for $k\in\bN$, we have $\lim_{k\to\infty}\|Y-Y^k\|_{T,\lambda,\kappa}=0$ for any $T>0$.
\end{rem}

%%%%%%%%%%%%%%%%%
%% Subsection
%%%%%%%%%%%%%%%%%

\subsection{Proof of \cref{lift_theo_local-Lip}: Existence and uniqueness; the local Lipschitz case}\label{appendix_3}

Suppose that \cref{lift_assum_kernel} holds. Let $b:\Omega\times[0,\infty)\times\bR^n\times\cH_\mu\to\bR^n$ and $\sigma:\Omega\times[0,\infty)\times\bR^n\times\cH_\mu\to\bR^{n\times d}$ satisfy \cref{lift_assum_general-SEE} (i) (the measurability condition), (ii) (the linear growth condition) and (iii)' (the local Lipschitz condition).

%% Proof

\begin{proof}[Proof of \cref{lift_theo_local-Lip}]
First, we show the existence of a mild solution. Let $\eta\in L^2_{\cF_0}(\cH_\mu)$ be fixed. For each $k\in\bN$, define $b_k:\Omega\times[0,\infty)\times\bR^n\times\cH_\mu\to\bR^n$ and $\sigma_k:\Omega\times[0,\infty)\times\bR^n\times\cH_\mu\to\bR^{n\times d}$ by
\begin{equation*}
	b_k(t,x,y):=b(t,x,\pi_k(y))\1_{[0,\tau_k)}(t)\ \text{and}\ \sigma_k(t,x,y):=\sigma(t,x,\pi_k(y))\1_{[0,\tau_k)}(t),
\end{equation*}
where $\pi_k:\cH_\mu\to\cH_\mu$ is defined by
\begin{equation*}
	\pi_k(y):=
	\begin{dcases}
	y\ &\text{if $\|y\|_{\cH_\mu}\leq k$},\\
	\frac{k}{\|y\|_{\cH_\mu}}y\ &\text{if $\|y\|_{\cH_\mu}>k$}.
	\end{dcases}
\end{equation*}
Then $b_k$ and $\sigma_k$ satisfy \cref{lift_assum_general-SEE} (i) (the measurability condition), (ii) (the linear growth condition) with the common $c_\LG$ and $\varphi$, and (iii) (the global Lipschitz condition) with the constant $L$ replaced by $L_k$. Thus, by \cref{lift_theo_global-Lip}, there exists a unique mild solution $Y^k$ to the generalized SEE \eqref{lift_eq_general-SEE} with the coefficients $b_k$ and $\sigma_k$ and the initial condition $\eta$. Furthermore, by \cref{lift_theo_bound}, we have
\begin{equation}\label{lift_eq_uniform-bound}
	\bE\Big[\sup_{t\in[0,T]}\|Y^k_t\|^2_{\cH_\mu}+\int^T_0\|Y^k_t\|^2_{\cV_\mu}\,\diff t\Big]\leq Ce^{CT}\bE\Big[\|\eta\|^2_{\cH_\mu}+\int^T_0\varphi^2_t\,\diff t\Big]<\infty
\end{equation}
for any $T>0$ and any $k\in\bN$, where the constant $C>0$ depends only on the measure $\mu$ and the linear growth constant $c_\LG$. For each $k\in\bN$, define a stopping time $\hat{\tau}_k$ by
\begin{equation*}
	\hat{\tau}_k:=\inf\{t\geq0\,|\,\|Y^k_t\|_{\cH_\mu}>k\}\wedge\tau_k.
\end{equation*}
Then by \cref{lift_theo_global-Lip} (ii), for any $k_1,k_2\in\bN$ with $k_1>k_2$ and any $T>0$, we have
\begin{align*}
	&\bE\Big[\sup_{t\in[0,T]}\|Y^{k_1}_{t\wedge\hat{\tau}_{k_2}}-Y^{k_2}_{t\wedge\hat{\tau}_{k_2}}\|^2_{\cH_\mu}+\int^{T\wedge\hat{\tau}_{k_2}}_0\|Y^{k_1}_t-Y^{k_2}_t\|^2_{\cV_\mu}\,\diff t\Big]\\
	&\leq C_{k_1}e^{C_{k_1}T}\bE\Big[\int^{T\wedge\hat{\tau}_{k_2}}_0\Big\{\big|b_{k_1}(t,\mu[Y^{k_2}_t],Y^{k_2}_t)-b_{k_2}(t,\mu[Y^{k_2}_t],Y^{k_2}_t)\big|^2\\
	&\hspace{5cm}+\big|\sigma_{k_1}(t,\mu[Y^{k_2}_t],Y^{k_2}_t)-\sigma_{k_2}(t,\mu[Y^{k_2}_t],Y^{k_2}_t)\big|^2\Big\}\,\diff t\Big],
\end{align*}
for some constant $C_{k_1}>0$ which depends only on $\mu$ and $L_{k_1}$. By the definitions of $b_k$, $\sigma_k$ and $\hat{\tau}_k$, noting that $k_1>k_2$, we see that the right-hand side is equal to zero. Therefore, we see that
\begin{equation*}
	Y^{k_1}_{t\wedge\hat{\tau}_{k_2}}=Y^{k_2}_{t\wedge\hat{\tau}_{k_2}}\ \text{for any $t\geq0$ a.s.}
\end{equation*}
This further implies that $\hat{\tau}_{k_1}\geq\hat{\tau}_{k_2}$ a.s. Indeed, for any $T\geq0$, $\hat{\tau}_{k_2}\geq T$ implies that $\tau_{k_1}\geq\tau_{k_2}\geq T$ by the assumption of $\{\tau_k\}_{k\in\bN}$, and
\begin{equation*}
	k_1>k_2\geq\sup_{t\in[0,T]}\|Y^{k_2}_t\|_{\cH_\mu}=\sup_{t\in[0,T]}\|Y^{k_1}_t\|_{\cH_\mu},
\end{equation*}
which implies that $\inf\{t\geq0\,|\,\|Y^{k_1}_t\|_{\cH_\mu}>k_1\}\geq T$, and hence $\hat{\tau}_{k_1}\geq T$. Thus, the sequence $\{\hat{\tau}_k\}_{k\in\bN}$ of stopping times is non-decreasing. Furthermore, for any $T>0$,
\begin{align*}
	\bP\big(\lim_{k\to\infty}\hat{\tau}_k\leq T\big)&=\lim_{k\to\infty}\bP\big(\hat{\tau}_k\leq T\big)=\lim_{k\to\infty}\bP\Big(\tau_k\leq T\ \text{or}\ \sup_{t\in[0,T]}\|Y^k_t\|_{\cH_\mu}>k\Big)\\
	&\leq\limsup_{k\to\infty}\bP\big(\tau_k\leq T\big)+\limsup_{k\to\infty}\bP\Big(\sup_{t\in[0,T]}\|Y^k_t\|_{\cH_\mu}>k\Big)\\
	&\leq\limsup_{k\to\infty}\frac{1}{k^2}\bE\Big[\sup_{t\in[0,T]}\|Y^k_t\|^2_{\cH_\mu}\Big],
\end{align*}
and the last term is zero due to the uniform bound \eqref{lift_eq_uniform-bound}. This implies that $\lim_{k\to\infty}\hat{\tau}_k=\infty$ a.s. Now we define
\begin{equation*}
	Y_t:=
	\begin{dcases}
	\lim_{k\to\infty}Y^k_t\ &\text{if the limit exists},\\
	0\ &\text{otherwise},
	\end{dcases}
\end{equation*}
where the limit is taken in $\cH_\mu$. Then we see that $Y$ is an $\cH_\mu$-valued adapted process and satisfies
\begin{equation*}
	Y_{t\wedge\hat{\tau}_k}=Y^k_{t\wedge\hat{\tau}_k}\ \text{for any $t\geq0$ a.s.}
\end{equation*}
for any $k\in\bN$. Since each $Y^k$ is $\cH_\mu$-continuous and $\lim_{k\to\infty}\hat{\tau}_k=\infty$ a.s., we see that $Y$ is $\cH_\mu$-continuous a.s. Furthermore, by Fatou's lemma and \eqref{lift_eq_uniform-bound},
\begin{align*}
	\bE\Big[\sup_{t\in[0,T]}\|Y_t\|^2_{\cH_\mu}+\int^T_0\|Y_t\|^2_{\cV_\mu}\,\diff t\Big]&\leq\liminf_{k\to\infty}\bE\Big[\sup_{t\in[0,T]}\|Y^k_{t\wedge\hat{\tau}_k}\|^2_{\cH_\mu}+\int^{T\wedge\hat{\tau}_k}_0\|Y^k_t\|^2_{\cV_\mu}\,\diff t\Big]\\
	&\leq Ce^{CT}\bE\Big[\|\eta\|^2_{\cH_\mu}+\int^T_0\varphi^2_t\,\diff t\Big]<\infty
\end{align*}
for any $T>0$. In particular, by \cref{lift_lemm_space} (iii) and \cref{lift_assum_general-SEE} (ii) (the linear growth condition), we have
\begin{equation*}
	\bE\Big[\int^T_0\Big\{|b(t,\mu[Y_t],Y_t)|+|\sigma(t,\mu[Y_t],Y_t)|^2\Big\}\,\diff t\Big]<\infty
\end{equation*}
for any $T>0$. Then, by the definitions of $b_k$, $\sigma_k$ and $\hat{\tau}_k$, we have
\begin{align*}
	Y_t\1_{\{t\leq\hat{\tau}_k\}}&=Y^k_t\1_{\{t\leq\hat{\tau}_k\}}\\
	&=\Big\{e^{-\cdot t}\eta(\cdot)+\int^t_0e^{-\cdot(t-s)}b_k(s,\mu[Y^k_s],Y^k_s)\,\diff s+\int^t_0e^{-\cdot(t-s)}\sigma_k(s,\mu[Y^k_s],Y^k_s)\,\diff W_s\Big\}\1_{\{t\leq\hat{\tau}_k\}}\\
	&=\Big\{e^{-\cdot t}\eta(\cdot)+\int^t_0e^{-\cdot(t-s)}b(s,\mu[Y_s],Y_s)\,\diff s+\int^t_0e^{-\cdot(t-s)}\sigma(s,\mu[Y_s],Y_s)\,\diff W_s\Big\}\1_{\{t\leq\hat{\tau}_k\}}
\end{align*}
a.s.\ for any $t\geq0$ and any $k\in\bN$. By taking the limit $k\to\infty$, we see that
\begin{equation*}
	Y_t=e^{-\cdot t}\eta(\cdot)+\int^t_0e^{-\cdot(t-s)}b(s,\mu[Y_s],Y_s)\,\diff s+\int^t_0e^{-\cdot(t-s)}\sigma(s,\mu[Y_s],Y_s)\,\diff W_s
\end{equation*}
a.s.\ for any $t\geq0$. Therefore, $Y$ is a mild solution of the generalized SEE \eqref{lift_eq_general-SEE}.

Next, we prove the uniqueness. Let $\bar{Y}$ be another mild solution of the generalized SEE \eqref{lift_eq_general-SEE}, and define
\begin{equation*}
	\bar{\tau}_k:=\inf\{t\geq0\,|\,\|\bar{Y}_t\|_{\cH_\mu}>k\}\wedge\hat{\tau}_k
\end{equation*}
for any $k\in\bN$. Then $\{\bar{\tau}_k\}_{k\in\bN}$ is a sequence of stopping times such that $\bar{\tau}_k\leq\bar{\tau}_{k+1}$ for any $k\in\bN$ and $\lim_{k\to\infty}\bar{\tau}_k=\infty$ a.s. By the construction of $Y$ and \cref{lift_theo_global-Lip} (ii), for any $T>0$ and $k\in\bN$,
\begin{align*}
	&\bE\Big[\sup_{t\in[0,T]}\|Y_{t\wedge\bar{\tau}_k}-\bar{Y}_{t\wedge\bar{\tau}_k}\|^2_{\cH_\mu}+\int^{T\wedge\bar{\tau}_k}_0\|Y_t-\bar{Y}_t\|^2_{\cV_\mu}\,\diff t\Big]\\
	&=\bE\Big[\sup_{t\in[0,T]}\|Y^k_{t\wedge\bar{\tau}_k}-\bar{Y}_{t\wedge\bar{\tau}_k}\|^2_{\cH_\mu}+\int^{T\wedge\bar{\tau}_k}_0\|Y^k_t-\bar{Y}_t\|^2_{\cV_\mu}\,\diff t\Big]\\
	&\leq  C_ke^{C_kT}\bE\Big[\int^{T\wedge\bar{\tau}_k}_0\Big\{|b_k(t,\mu[\bar{Y}_t],\bar{Y}_t)-b(t,\mu[\bar{Y}_t],\bar{Y}_t)|^2+|\sigma_k(t,\mu[\bar{Y}_t],\bar{Y}_t)-\sigma(t,\mu[\bar{Y}_t],\bar{Y}_t)|^2\Big\}\,\diff t\Big]
\end{align*}
for some constant $C_k>0$ depending only on $\mu$ and $L_k$. By the definitions of $b_k$, $\sigma_k$ and $\bar{\tau}_k$, we see that the last term is zero. Thus, $\bar{Y}_{t\wedge\bar{\tau}_k}=Y_{t\wedge\bar{\tau}_k}$ for any $t\geq0$ a.s.\ for any $k\in\bN$. Since $\lim_{k\to\infty}\bar{\tau}_k=\infty$ a.s., we see that $\bar{Y}_t=Y_t$ for any $t\geq0$ a.s. This proves the uniqueness.
\end{proof}

%%%%%%%%%%%%%%%%%%%%%%%%%%%%
%%%%%% Acknowledgements
%%%%%%%%%%%%%%%%%%%%%%%%%%%%

\section*{Acknowledgments}
The author would like to thank Professor Bin Xie for insightful discussions, and Professor Masanori Hino for pointing out an error in \cref{Harnack_lemm_R} in the previous version.

%%%%%%%%%%%%%%%%%%%%%%%%%%%%
%%%%%% References
%%%%%%%%%%%%%%%%%%%%%%%%%%%%

\end{document}